\def\wt#1{{\widetilde{#1}}}
\newcommand{\Hess}{\operatorname{H}}
\newcommand{\Flex}{\operatorname{Flex}}
\newcommand{\Res}{\operatorname{Res}}
\newcommand{\val}{\operatorname{val}}
\newcommand{\ord}{\operatorname{ord}}
\newcommand{\pr}{\operatorname{pr}}
\renewcommand{\i}{\operatorname{i}}
\renewcommand{\and}{\quad \text{and} \quad}
\newcommand{\A}{\mathbb{A}}
\newcommand{\N}{\mathbb{N}}
\renewcommand{\P}{\mathbb{P}}
\newcommand{\Z}{\mathbb{Z}}
\newcommand{\cL}{{\mathcal L}}
\newcommand{\cO}{{\mathcal O}}
\newcommand{\bfa}{{\boldsymbol{a}}}
\newcommand{\bfb}{{\boldsymbol{b}}}
\newcommand{\bfc}{{\boldsymbol{c}}}
\newcommand{\bfd}{{\boldsymbol{d}}}
\newcommand{\bfp}{{\boldsymbol{p}}}
\newcommand{\bfq}{{\boldsymbol{q}}}
\newcommand{\bfu}{{\boldsymbol{u}}}
\newcommand{\bfv}{{\boldsymbol{v}}}
\newcommand{\bfx}{{\boldsymbol{x}}}
\newcommand{\bfy}{{\boldsymbol{y}}}
\newcommand{\bfalpha}{\boldsymbol{\alpha}}
\newcommand{\bfbeta}{\boldsymbol{\beta}}
\newcommand{\bfF}{\boldsymbol{F}}
\newcommand{\bfzero}{{\boldsymbol{0}}}
\numberwithin{equation}{section}
\theoremstyle{definition}
\newtheorem{definition}{Definition}[section]
\newtheorem{remark}[definition]{Remark}
\newtheorem{example}[definition]{Example}
\theoremstyle{plain}
\newtheorem{lemma}[definition]{Lemma}
\newtheorem{proposition}[definition]{Proposition}
\newtheorem{theorem}[definition]{Theorem}
\newtheorem{corollary}[definition]{Corollary}
\begin{document}

\title[The  flex locus of a hypersurface]{The geometry of the flex locus of a hypersurface}

\author[Bus\'e]{Laurent Bus\'e}
\address{INRIA Sophia Antipolis. 2004 route des Lucioles, 06902 Sophia Antipolis, France}
\email{Laurent.Buse@inria.fr }
\urladdr{\url{http://www-sop.inria.fr/members/Laurent.Buse/}}

\author[D'Andrea]{Carlos D'Andrea} 
\address{Departament de Matem\`atiques i
  Inform\`atica, Universitat de Barcelona. Gran Via 585, 08007
  Barcelona Spain} 
\email{cdandrea@ub.edu}
\urladdr{\url{http://www.ub.edu/arcades/cdandrea.html}}

\author[Sombra]{Mart{\'\i}n~Sombra} 
\address{Instituci\'o Catalana de Recerca
  i Estudis Avan\c{c}ats (ICREA). Passeig Llu{\'\i}s Companys~23,
  08010 Barcelona, Spain  \vspace*{-2.5mm}} 
\address{Departament de Matem\`atiques i
  Inform\`atica, Universitat de Barcelona. Gran Via 585, 08007
  Bar\-ce\-lo\-na, Spain} 
\email{sombra@ub.edu}
\urladdr{\url{http://www.maia.ub.edu/~sombra}}

\author[Weimann]{Martin Weimann}
\address{Laboratoire de math\'ematiques Nicolas Oresme, UMR CNRS 6139,
  Universit\'e de Caen. BP 5186, 14032 Caen Cedex,
  France \vspace*{-2.5mm}}
\address{Laboratoire de math\'ematiques GAATI, University of French
  Polynesia. BP 6570, 98702 Faaa,
  French Polynesia}
\email{weimann@unicaen.fr    }
\urladdr{\url{https://weimann.users.lmno.cnrs.fr/}}

\date{\today} 
\subjclass[2010]{Primary 14J70; Secondary 13P15}
\keywords{Hypersurfaces, flex locus, multivariate resultants}

\begin{abstract}
  We give a formula in terms of multidimensional resultants for an
  equation for the flex locus of a projective hypersurface,
  generalizing a classical result of Salmon for surfaces in $\P^{3}$.
  Using this formula, we compute the dimension of this flex locus, and
  an upper bound for the degree of its defining equations.
  We also show that, when the hypersurface is generic, this bound is reached, and that the generic flex line is
  unique and has the expected order of contact with the hypersurface.
\end{abstract}
\maketitle

\section{Introduction}

A point of a projective variety is a flex point if there is a line
with order of contact with the variety at this point higher
than expected. It is a generalization of the notion of inflexion point
of a curve. The study of the flex locus of curves and surfaces is a
classical subject of geometry from the XIXth century, treated by
Monge, Salmon and Cayley,  among
others. Currently, there is an increasing
interest in this object in low dimensions, mainly due to
its applications in incidence geometry \cite{Tao:MCStcdg,
  Kat14, GuthKatz:eddpp, Kollar:stttd3, EH, SharirSolomon:ibplttdv,GZ18}.

In this text, we study the geometry of the flex locus of a
hypersurface of a projective space of arbitrary dimension.  Before
explaining our results, we introduce some notation and summarize the
previous. Let $ K$ be an algebraically closed field of characteristic
zero, $\P^n$ the  projective space over~$K$ of dimension $n\ge 1$, and $V$ a
hypersurface of $\P^{n}$ of degree $d\ge 1$.  A point $p\in V$ is a
\emph{flex point} if there is a line with order of contact at least
$n+1$ with the hypersurface $V$ at the point $p$, and any such line is
called a \emph{flex line} (Definition \ref{def:2}). The \emph{flex locus} of $V$ is the set of all the flex points of $V$.  

An important result in this context is the so-called
Monge-Salmon-Cayley theorem for surfaces in $\P^{3}$, see for instance
\cite{Tao:MCStcdg, Kollar:stttd3}, generalized by Landsberg to the
higher dimensional case \cite[Theorem 3]{Land}. It states that if the
hypersurface $V$ is irreducible, then it is ruled if and only if all
of its points are flexes.

A hypersurface of degree less than $n$ is necessarily ruled
(Proposition \ref{lmu}) and its flex locus is the whole
hypersurface. Hence, one restricts the study of the flex locus to the
case $d\ge n$.

For a plane curve $C\subset \P^{2}$ of degree $d\ge 2$, a point
$p\in C$ is an inflexion point if and only if the determinant of the
Hessian matrix of the defining polynomial of $C$ vanishes at
$p$. This implies that the flex locus of $C$ is defined by a
polynomial of degree $3d-6$. Hence if $C$ contains no line, then it
has at most $3d^2-6d$ inflexion points, by B\'ezout theorem.

For a surface $S\subset \P^{3}$ of degree $d\ge 3$, an old result of
Salmon  states that there is a homogeneous polynomial in
$ K[x_{0},x_{1},x_{2},x_{3}]$ of degree $11d-24$ defining its flex
locus \cite[Article 588, pages 277--278]{Salmon:tagtd}, see also
\cite[\S 11.2.1]{EH}.  If $S$ has no ruled component, then this
result together with the Monge-Salmon-Cayley theorem and B\'ezout's
theorem imply that the flex locus is a curve of $S$ of degree at most
$  11d^2-24d.$

\medskip

We first address the problem of computing the dimension, and the
degree of both the defining equations and the flex locus.  Let
$\bfx=\{x_{0},\dots, x_{n}\}$ be a set of $n+1$ variables and
$f_{V}\in K[\bfx]$ a squarefree homogeneous polynomial defining
$V$. Let $t$ be another variable and $\bfy=\{y_{0},\dots, y_{n}\}$ a
further set of $n+1$ variables. Then we consider the family of
bihomogeneous polynomials $f_{V,k}$, $k=0,\dots, d$, in $K[\bfx,\bfy]$
determined by the expansion
\begin{equation*}
f_{V}(\bfx+t\bfy) =\sum_{k=0}^d f_{V,k}(\bfx,\bfy)\frac{t^k}{k!}.
\end{equation*}

Our first main result gives an equation for the flex locus of $V$ in
terms of multivariate resultants.

\begin{theorem}
  \label{thm:2}
There is a homogeneous polynomial $\rho_{V}\in K[\bfx]$ with
  \begin{equation*}
      \deg(\rho_{V}) =  d\sum_{k=1}^{n}\frac{n!}{k}-(n+1)!
  \end{equation*}
  defining the flex locus of $V$. It is uniquely determined modulo
  $f_{V}$ by the condition
\begin{equation*}
  \Res^{\bfy}(f_{V,1}(\bfx,\bfy), \dots, f_{V,n}(\bfx,\bfy), \ell(\bfy))
\equiv \ell^{n! } \rho_{V} \mod{f_V}, 
\end{equation*}
for any linear form $\ell \in K[\bfx]$,
where $\Res^{\bfy}$ denotes the resultant of $n+1$ homogeneous
polynomials in the variables $\bfy$.  
\end{theorem}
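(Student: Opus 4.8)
The plan is to read the resultant $\Res^{\bfy}(f_{V,1},\dots,f_{V,n},\ell)$ as a device that detects and locates the common zeros in $\P^{n}_{\bfy}$ of the forms $f_{V,1}(\bfx,\cdot),\dots,f_{V,n}(\bfx,\cdot)$. First I would record the contact-order description (Definition \ref{def:2}): since $f_{V}(\bfx+t\bfy)=\sum_{k} f_{V,k}(\bfx,\bfy)t^{k}/k!$, a line through $p=[\bfx]\in V$ in direction $\bfy$ has order of contact at least $n+1$ exactly when $f_{V,0}(\bfx,\bfy)=\cdots=f_{V,n}(\bfx,\bfy)=0$; as $f_{V,0}=f_{V}(\bfx)$ vanishes on $V$, the relevant set is $Z(\bfx):=V(f_{V,1}(\bfx,\cdot),\dots,f_{V,n}(\bfx,\cdot))\subset\P^{n}_{\bfy}$. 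The key structural point, which I would prove by expanding $f_{V}(\bfx+t(\bfy+\lambda\bfx))=(1+t\lambda)^{d}f_{V}(\bfx+\tfrac{t}{1+t\lambda}\bfy)$ and comparing coefficients of $t^{m}$, is that each $f_{V,m}(\bfx,\bfy+\lambda\bfx)$ is a $K[\lambda]$-combination of $f_{V,0},\dots,f_{V,m}$. Hence for $\bfx\in V$ the set $Z(\bfx)$ is invariant under $\bfy\mapsto\bfy+\lambda\bfx$, i.e. it is a cone with apex $[\bfx]$; in particular $[\bfx]\in Z(\bfx)$, which also follows from Euler's identity $f_{V,k}(\bfx,\bfx)=\tfrac{d!}{(d-k)!}f_{V}(\bfx)$.

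This yields a clean dichotomy: $p$ is a flex iff $Z(\bfx)$ contains a line through $[\bfx]$, i.e. iff $\dim Z(\bfx)\ge 1$; and when $Z(\bfx)$ is finite the cone structure forces $Z(\bfx)=\{[\bfx]\}$, necessarily of multiplicity $1\cdot 2\cdots n=n!$ by B\'ezout. The second ingredient is the $u$-resultant (Poisson) product formula: writing $\ell=\sum_{i}u_{i}y_{i}$, for $\bfx$ with $Z(\bfx)$ finite one has $\Res^{\bfy}(f_{V,1},\dots,f_{V,n},\ell)=C(\bfx)\prod_{\xi\in Z(\bfx)}\ell(\xi)^{\mult_{\xi}}$ with a $\bfu$-free factor $C(\bfx)$. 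Feeding in $Z(\bfx)=\{[\bfx]\}$ with multiplicity $n!$ gives, at a generic point of $V$,
\[
\Res^{\bfy}(f_{V,1},\dots,f_{V,n},\ell)=C(\bfx)\,\ell(\bfx)^{n!}.
\]
I would then set $\rho_{V}:=C$ and upgrade this to a congruence. Concretely, I would show that in $(K[\bfx]/(f_{V}))[\bfu]$ the resultant is divisible by the primitive polynomial $\ell(\bfx)^{n!}=(\sum_{i}u_{i}x_{i})^{n!}$: over each irreducible component one is in the finite case above (divisibility with a nonzero cofactor) or in the positive-dimensional case (resultant $\equiv 0$, divisibility trivial), and a Gauss-lemma argument keeps the cofactor polynomial. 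Since the resultant is homogeneous of degree $n!$ in $\bfu$, the cofactor has $\bfu$-degree $0$, so $\rho_{V}\in K[\bfx]$ and $\Res^{\bfy}\equiv\ell^{n!}\rho_{V}\bmod f_{V}$ for every linear $\ell$; uniqueness mod $f_{V}$ is then immediate.

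The degree is bookkeeping with the multihomogeneity of the resultant: it is homogeneous of degree $n!/i$ in the coefficients of $f_{V,i}$ and of degree $n!$ in those of $\ell$, and the coefficients of $f_{V,i}$ are forms of degree $d-i$ in $\bfx$, so $\deg_{\bfx}\Res^{\bfy}=\sum_{i=1}^{n}\tfrac{n!}{i}(d-i)$; subtracting $\deg_{\bfx}\ell(\bfx)^{n!}=n!$ gives $\deg\rho_{V}=d\sum_{i=1}^{n}\tfrac{n!}{i}-(n+1)!$. The flex-locus identification then follows directly from the congruence, evaluated at a point $[\bfx_{0}]\in V$ so that $f_{V}=0$: if $\rho_{V}(\bfx_{0})=0$ then $\Res^{\bfy}(\bfx_{0},\bfu)\equiv 0$ in $\bfu$, so $Z(\bfx_{0})$ meets every hyperplane and is therefore positive-dimensional, whence $p$ is a flex; conversely, if $p$ is a flex then $\Res^{\bfy}(\bfx_{0},\bfu)\equiv 0$, and choosing $\ell$ with $\ell(\bfx_{0})\neq 0$ forces $\rho_{V}(\bfx_{0})=0$.

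I expect the main obstacle to be making the product formula fully rigorous and extracting $\rho_{V}$ cleanly: justifying the factorization when $Z(\bfx)$ is a non-reduced scheme concentrated at the apex, and proving that the cofactor is a polynomial in $\bfx$ alone rather than merely a rational function. The delicate points are the content/primitivity (Gauss-lemma) step that globalizes divisibility by $\ell(\bfx)^{n!}$ across the (possibly reducible, possibly partly ruled) components of $V$, on some of which $\rho_{V}\equiv 0\bmod f_{V}$, and the careful placement of the congruence modulo $f_{V}$, which reflects that $[\bfx]\in Z(\bfx)$ holds only on $V$. Once the congruence is secured the geometric consequences, including the degree and the flex-locus equality, are short.
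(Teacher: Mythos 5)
Your geometric skeleton is sound and matches the paper's: the cone structure of $Z_p^n$ (your expansion $f_{V}(\bfx+t(\bfy+\lambda\bfx))=(1+t\lambda)^{d}f_{V}(\bfx+\tfrac{t}{1+t\lambda}\bfy)$ is a correct algebraic variant of Lemma \ref{lmain}), the criterion that $p$ is a flex iff $\dim Z_p^n\ge 1$ (Corollary \ref{cor:regular}), the multiplicity $n!$ at a non-flex point via B\'ezout, the degree bookkeeping, the flex-locus identification (Proposition \ref{cor:flex}) and the uniqueness are all correct. The genuine gap is exactly at the step you yourself flag as delicate: passing from the pointwise factorization $\Res^{\bfy}(f_{V,1}(\bfp,\bfy),\dots,f_{V,n}(\bfp,\bfy),\ell_{\bfu}(\bfy))=C(\bfp)\,\ell_{\bfu}(\bfp)^{n!}$ at non-flex points $\bfp$ to divisibility of the resultant by $\bigl(\sum_i u_ix_i\bigr)^{n!}$ in $(K[\bfx]/(f_{V}))[\bfu]$ with a cofactor that is an honest element of $K[\bfx]/(f_{V})$. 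The tool you invoke, ``a Gauss-lemma argument,'' does not apply: $K[\bfx]/(f_{V})$ is not a domain when $V$ is reducible, is almost never a UFD even when $f_{V}$ is irreducible, and---worse---the polynomial $\bigl(\sum_i u_ix_i\bigr)^{n!}$ is not primitive in any usable sense, since its $\bfu$-coefficients generate the proper ideal $(x_0,\dots,x_n)^{n!}$ modulo $f_{V}$. What your function-field argument on each component $Z(f_i)$ actually yields (taking the coefficient of $u_j^{n!}$) is only that the cofactor lies in every localization $(K[\bfx]/(f_i))[x_j^{-1}]$; to conclude it lies in $K[\bfx]/(f_i)$ itself you need a saturation statement, e.g.\ that the hypersurface ring is Cohen--Macaulay of dimension $\ge 2$ and hence has depth $\ge 2$ (or a Dedekind--Mertens content computation combined with that saturation). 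None of this is in your sketch, so the central divisibility claim is asserted rather than proved.

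This is precisely where the paper's proof diverges and closes the issue. It first proves the symmetric congruence $h^{n!}R_{V,g}\equiv g^{n!}R_{V,h} \mod{f_{V}}$ for any two forms of equal positive degree (Lemma \ref{lemm:2}, by evaluating the Poisson formula at every point of $V$, both sides vanishing at flexes), then introduces two sets of auxiliary variables $\bfu,\bfv$ and the resulting identity \eqref{eq:18}, and observes that $\ell_{\bfu}^{n!},\ell_{\bfv}^{n!},f_{V}$ is a regular sequence in $K[\bfu,\bfv,\bfx]$. Hence the syzygy \eqref{eq:18} is a Koszul syzygy, which directly produces $R_{V,\ell_{\bfu}}=\ell_{\bfu}^{n!}\rho_{V}+f_{V}\sigma$ with $\rho_{V},\sigma$ polynomial; the bidegree count $\deg_{\bfu}(R_{V,\ell_{\bfu}})=n!$, $\deg_{\bfv}(R_{V,\ell_{\bfu}})=0$ then forces $\rho_{V}\in K[\bfx]$. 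This one commutative-algebra observation replaces the content/saturation difficulty entirely. Your argument becomes a complete proof if you substitute the Koszul-syzygy step (or carry out the depth-$\ge 2$ saturation argument) for the Gauss-lemma step; the remaining parts of your proposal can stand as written.
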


This result recovers the previous degree computations for the
polynomial defining the flex locus of a plane curve or of a surface in
$\P^{3}$.  It also allows us to give a scheme structure to the flex
locus: we define the \emph{flex scheme} $\Flex(V) $ as the {subscheme}
of $\P^{n}$ defined by the homogeneous polynomials $f_{V}$ and
$\rho_{V}$ (Definition \ref{def:3}). This scheme does not depend on
the choice $f_{V}$, unique up to a nonzero scalar factor, nor on that
of $\rho_{V}$, unique modulo $f_{V}$. Thus, the flex locus of $V$ is
the reduced scheme associated to $\Flex(V)$.

The next corollary is a direct consequence of Theorem \ref{thm:2} and
Landsberg's theorem generalizing the Monge-Salmon Cayley theorem
\cite[Theorem 3]{Land}.

\begin{corollary}
  \label{cor:1} If $V$ has no ruled irreducible components, then
  $\Flex( V) $ is a complete intersection subscheme of $\P^{n}$ of dimension $n-2$ and of
  degree
\begin{equation*}
\deg (\Flex(V)) =
d^2\sum_{k=1}^{n}\frac{n!}{k}-d\, (n+1)!.
\end{equation*} 
In particular, the flex locus of $V$ is set-theoretically defined by
equations of degree at most
$\max(d,d\sum_{k=1}^{n}\frac{n!}{k}-(n+1)!)$, and its degree, as an
algebraic set, is at most $d^2\sum_{k=1}^{n}\frac{n!}{k}-d(n+1)!$.
\end{corollary}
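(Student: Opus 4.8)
The plan is to derive everything from Theorem~\ref{thm:2} together with Landsberg's theorem, the only genuine content being that the two defining polynomials $f_{V}$ and $\rho_{V}$ form a regular sequence. Once this is in hand, $\Flex(V)$ is by construction the subscheme $\{f_{V}=\rho_{V}=0\}$ of $\P^{n}$ cut out by a regular sequence of length $2$, hence a complete intersection of pure codimension $2$, so of dimension $n-2$, and its degree is computed by B\'ezout's theorem for complete intersections as the product
\[
\deg(f_{V})\cdot\deg(\rho_{V}) = d\Bigl(d\sum_{k=1}^{n}\frac{n!}{k}-(n+1)!\Bigr) = d^{2}\sum_{k=1}^{n}\frac{n!}{k}-d\,(n+1)!,
\]
where $\deg(\rho_{V})$ is taken from Theorem~\ref{thm:2}. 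The ``in particular'' assertions then follow at once: set-theoretically the flex locus is $V\cap\{\rho_{V}=0\}$, so it is defined by $f_{V}$ and $\rho_{V}$, of degrees $d$ and $d\sum_{k=1}^{n}\frac{n!}{k}-(n+1)!$, whence by equations of degree at most the maximum of these two; and since the flex locus is the reduced scheme underlying the equidimensional scheme $\Flex(V)$, its degree as an algebraic set is bounded above by $\deg(\Flex(V))$.

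Thus the crux is to prove $\gcd(f_{V},\rho_{V})=1$. Since $f_{V}$ is squarefree, I would write $f_{V}=g_{1}\cdots g_{r}$ as the product of its distinct irreducible factors, with $W_{i}=\{g_{i}=0\}$ the irreducible components of $V$. As $(f_{V})$ is radical, with minimal primes $(g_{i})$, the pair $f_{V},\rho_{V}$ is a regular sequence exactly when no $g_{i}$ divides $\rho_{V}$; so I would argue by contradiction and assume $g_{i}\mid\rho_{V}$ for some $i$. By Theorem~\ref{thm:2}, $\rho_{V}$ cuts out the flex locus of $V$ along $V$, so $\rho_{V}$ vanishing identically on $W_{i}$ forces every point of $W_{i}$ to be a flex point of $V$. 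The aim is to contradict the hypothesis through Landsberg's theorem \cite[Theorem 3]{Land}, applied to the \emph{irreducible} hypersurface $W_{i}$, which asserts that $W_{i}$ is ruled if and only if all of its points are flexes of $W_{i}$.

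The main obstacle, and the only delicate point, is to pass from ``every point of $W_{i}$ is a flex of $V$'' to ``every point of $W_{i}$ is a flex of $W_{i}$'', since the flex condition for $V$ is read off from $f_{V}$ rather than from $g_{i}$. I would settle this by a local computation at a generic point. On the dense open subset $U\subset W_{i}$ of points that are smooth on $W_{i}$, smooth on $V$, and lie on no other component $W_{j}$, one has $f_{V}=u\,g_{i}$ with $u$ a local unit near each $p\in U$; hence for every line $L$ through such a $p$ the order of contact of $L$ with $V$ at $p$ equals the vanishing order at $p$ of $f_{V}|_{L}=(u|_{L})(g_{i}|_{L})$, which coincides with the order of contact of $L$ with $W_{i}$ at $p$ because the factor $u|_{L}$ does not vanish at $p$. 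Therefore the flexes of $V$ and of $W_{i}$ agree on $U$. Since $U$ is dense in $W_{i}$ and the flex locus of $W_{i}$ is Zariski closed, all of $W_{i}$ lies in the flex locus of $W_{i}$, so Landsberg's theorem forces $W_{i}$ to be ruled, contradicting the hypothesis that $V$ has no ruled component. This yields $\gcd(f_{V},\rho_{V})=1$ and completes the argument.
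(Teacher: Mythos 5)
Your proof is correct and follows exactly the route the paper intends: Corollary \ref{cor:1} is stated there as a direct consequence of Theorem \ref{thm:2} and Landsberg's theorem with no further argument given, and your deduction (showing $f_{V},\rho_{V}$ is a regular sequence via Landsberg's theorem, then applying B\'ezout to the codimension-two complete intersection) is precisely that deduction. Your local computation passing from ``every point of $W_{i}$ is a flex of $V$'' to ``every point of $W_{i}$ is a flex of $W_{i}$'' correctly fills in the one detail the paper leaves implicit.
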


Set $\cL_{V}$ for { the union of the lines} contained in $V$.  When $d=n$, a
flex line of $V$ at a point $p\in V$ has order of contact at least
$n+1$ at this point, and so it is necessarily contained in $V$ by B\'ezout theorem. Hence
in this case, $\cL_{V}$ coincides with the flex locus of $V$.

\begin{corollary}
  \label{cor:2}
Let $V$ be a hypersurface of $\P^{n}$ of degree $n$ without  ruled irreducible
component. Then $\cL_{V}$ is a
ruled subvariety of $V$ of dimension $n-2$
and of degree
  at most 
    \begin{displaymath}
  n^{3}\, (n-1)! \, \sum_{k=2}^{n-1} \frac{1}{k}.
\end{displaymath}
\end{corollary}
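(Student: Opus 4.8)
The plan is to deduce everything from Corollary~\ref{cor:1} together with the identification of $\cL_V$ with the flex locus in the case $d=n$, recalled in the paragraph preceding the statement. Since $V$ has degree $n$ and no ruled irreducible component, a flex line of $V$ has order of contact at least $n+1$ and is therefore contained in $V$ by B\'ezout; hence $\cL_V$ coincides set-theoretically with the flex locus of $V$, which by definition is the reduced scheme associated to $\Flex(V)$. Corollary~\ref{cor:1} applies and shows that $\Flex(V)$ is a complete intersection subscheme of $\P^n$, in particular of pure dimension $n-2$. Taking the associated reduced scheme preserves the dimension, so $\cL_V=(\Flex(V))_{\mathrm{red}}$ has dimension $n-2$.

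For the degree, I would first record the elementary fact that passing to the reduction can only decrease the degree. Writing the top-dimensional cycle of $\Flex(V)$ as $\sum_i m_i\,[X_i]$ with irreducible components $X_i$ and multiplicities $m_i\ge 1$, one has $\deg(\cL_V)=\sum_i\deg(X_i)\le\sum_i m_i\deg(X_i)=\deg(\Flex(V))$. Combined with Corollary~\ref{cor:1} this gives the bound $\deg(\cL_V)\le d^2\sum_{k=1}^{n}\tfrac{n!}{k}-d\,(n+1)!$.

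It then remains to substitute $d=n$ and simplify. Writing $H_n=\sum_{k=1}^n\tfrac1k$ and using $n(n+1)!=(n^2+n)\,n!$, the bound becomes $n!\,(n^2 H_n-n^2-n)=n!\,(n^2(H_n-1)-n)$. Splitting off the last term of $H_n-1=\sum_{k=2}^n\tfrac1k$, that is, writing $n^2\sum_{k=2}^n\tfrac1k=n^2\sum_{k=2}^{n-1}\tfrac1k+n$, the two occurrences of $n$ cancel and one is left with $n^2\cdot n!\sum_{k=2}^{n-1}\tfrac1k=n^3(n-1)!\sum_{k=2}^{n-1}\tfrac1k$, the claimed quantity. (Note the convention that the empty sum for $n=2$ gives the bound $0$, consistent with a smooth conic containing no line.)

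Finally, for ruledness: by definition every point of $\cL_V$ lies on a line contained in $V$, hence contained in $\cL_V$, so $\cL_V$ is covered by lines and is therefore ruled; concretely it is the image of the second projection $\mathcal{U}\to\P^n$, where $\mathcal{U}$ is the universal line over the Fano scheme of lines in $V$, a $\P^1$-bundle over that scheme. No genuine obstacle arises, since all the geometric input is already packaged in Corollary~\ref{cor:1} and in the remark preceding the statement; the only points requiring a little care are the degree comparison between a scheme and its reduction and the rearrangement of the harmonic sum.
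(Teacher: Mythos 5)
Your proposal is correct and follows essentially the same route as the paper, which presents this corollary as a direct consequence of Corollary~\ref{cor:1} together with the identification (via B\'ezout) of $\cL_V$ with the flex locus when $d=n$. Your added details --- the degree comparison between $\Flex(V)$ and its reduction, the harmonic-sum simplification for $d=n$, and the covering-by-lines argument for ruledness --- are exactly the steps the paper leaves implicit, and all check out.
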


Our second main result ensures that the bound for the degree of the
flex locus is sharp, and that other expected properties hold true in
the generic case. 
These properties of generic hypersurfaces and flex are proven using resultant theory. These aspects were not considered in the original work of Salmon, and the obtained results are new in every dimension.

\begin{theorem}\label{tmain2}
Let $V$ be a generic hypersurface  of $\P^{n}$ of degree $d\ge n$. Then
\begin{enumerate}
\item \label{item:3} $\Flex(V)$ is a reduced subscheme (that is, a
  subvariety) of $V$ of dimension $n-2$;
\item \label{item:4} for a generic flex point $p$ of $V$, there is a
  unique flex line { containing} it. If $d=n$, then this line is
  contained in $V$, whereas if $d>n$, then its order of contact with $V$ at
  $p$ is exactly $n+1$.
\end{enumerate}
\end{theorem}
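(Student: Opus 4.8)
The plan is to exhibit each assertion as the generic behaviour in the parameter space $\P^{N}$ of hypersurfaces of degree $d$, with $N=\binom{n+d}{n}-1$, by means of dimension counts on suitable incidence varieties; since each property in question holds on the complement of a proper closed subset of $\P^{N}$, it suffices to bound the dimension of the corresponding locus of ``bad'' hypersurfaces. The dimension statement in~(\ref{item:3}) is already available: a generic $V$ is smooth and hence irreducible, and for $d\ge n$ the Fano scheme of lines contained in $V$ has dimension $2(n-1)-(d+1)\le n-3$, so the lines of $V$ sweep out a subvariety of dimension at most $n-2<n-1$; thus $V$ is not ruled and Corollary~\ref{cor:1} gives that $\Flex(V)$ is a complete intersection of dimension $n-2$. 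It remains to prove reducedness in~(\ref{item:3}) and the two assertions in~(\ref{item:4}).

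The central object is the incidence variety of flexes,
\[
\cF=\bigl\{(V,p,\bfy)\ :\ p\in V,\ f_{V,1}(p,\bfy)=\cdots=f_{V,n}(p,\bfy)=0\bigr\},
\]
lying in the product of $\P^{N}$ with the variety of pairs (point, line through it), the latter being irreducible of dimension $2n-1$. For a fixed such pair the $n+1$ equations $f_{V,0}(p,\bfy)=\cdots=f_{V,n}(p,\bfy)=0$ are linear in the coefficients of $f_{V}$; taking $p=[1\!:\!0\!:\cdots\!:\!0]$ and $\bfy$ the direction of $x_{1}$, they single out the coefficients of the $n+1$ distinct monomials $x_{0}^{d-k}x_{1}^{k}$ with $0\le k\le n$, which exist precisely because $d\ge n$. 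Hence, by homogeneity of the flag variety, the equations are independent for every pair, the fibres of $\cF$ over the flag are linear of constant dimension $N-n-1$, and $\cF$ is irreducible of dimension $N+n-2$. Since a non-ruled $V$ has $\Flex(V)$ of dimension $n-2$ and nonempty (a codimension-two complete intersection in $\P^{n}$), the forgetful map $\Psi\colon\cF\to\cU:=\{(V,p):p\in\Flex(V)\}\subset\P^{N}\times\P^{n}$ dominates $\P^{N}$ and $\dim\cU=N+n-2=\dim\cF$; thus $\Psi$ is generically finite and its degree is the number of flex lines through a generic flex point of a generic hypersurface.

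To prove uniqueness I would run the same count on the variety $\cF_{2}$ of quadruples $(V,p,\bfy,\bfy')$ with $\bfy\neq\bfy'$ two flex directions at $p$. Over a fixed triple (point and two distinct lines through it), parametrised by an irreducible base of dimension $3n-2$, the relevant equations now single out the $2n+1$ distinct monomials $x_{0}^{d}$, $x_{0}^{d-k}x_{1}^{k}$, $x_{0}^{d-k}x_{2}^{k}$ ($1\le k\le n$), so they remain independent and $\dim\cF_{2}=N+n-3$. Consequently the locus of flex points of a generic $V$ carrying at least two flex lines has dimension at most $n-3$, strictly smaller than $\dim\Flex(V)=n-2$; hence a generic flex point carries a unique flex line, proving the first half of~(\ref{item:4}). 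For the order of contact, enlarging $\cF$ by the single further condition $f_{V,n+1}(p,\bfy)=0$ adds, when $d>n$, one independent linear equation (the coefficient of $x_{0}^{d-n-1}x_{1}^{n+1}$), lowering the dimension to $N+n-3$; thus for generic $V$ the flex points admitting a line of contact order at least $n+2$ form a subvariety of dimension at most $n-3$, and the unique flex line at a generic flex point has contact order exactly $n+1$. When $d=n$, a contact order $\ge n+1>d$ forces the flex line into $V$ by B\'ezout, which is the remaining alternative in~(\ref{item:4}).

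The main obstacle is reducedness in~(\ref{item:3}). As $\Flex(V)$ is a complete intersection, hence Cohen--Macaulay and without embedded components, it is reduced as soon as it is generically reduced, i.e.\ as soon as $\rho_{V}$ vanishes to order one along a generic point of the flex locus. The key point is that, by the resultant formula of Theorem~\ref{thm:2}, this order of vanishing equals the multiplicity with which the flex direction occurs as a nontrivial common zero of $f_{V,1}(p,\cdot),\dots,f_{V,n}(p,\cdot)$; I would therefore show that at a generic flex point this common zero is simple. This amounts to the nonvanishing of $\Jac_{\bfy}(f_{V,1},\dots,f_{V,n})$ at the flex direction, a condition which by the same kind of dimension count cuts out on $\cF$ a proper subvariety, since it can be made nonzero at the distinguished pair $(p,\bfy)$ by an appropriate choice of the free coefficients of $V$. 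Granting this, the unique flex line at a generic flex point corresponds to a reduced isolated zero, $\Psi$ is birational, and $\rho_{V}$ is reduced modulo $f_{V}$; translating the multiplicity computation for the resultant into this last statement is the technical heart of the argument.
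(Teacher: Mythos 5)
Your handling of the dimension statement in (\ref{item:3}) and of both claims in (\ref{item:4}) is sound, although it takes a genuinely different route from the paper. For the dimension you first establish non-ruledness of a generic $V$ (via the standard Fano-scheme count) and then invoke Corollary \ref{cor:1}, whereas the paper obtains non-ruledness as a \emph{consequence} of its proof of (\ref{item:3}); for uniqueness of the flex line you count dimensions on the incidence variety $\cF_{2}$ of pairs of distinct flex directions, where the paper instead proves that the projection $\pi\colon\Gamma_{n}\to\Omega$ is birational using the derivative/inversion property of resultants (Proposition \ref{prop:9}). Your $\cF_{2}$ argument is more elementary and correct, and your contact-order argument (adding the equation $F_{n+1}(p,\bfy)=0$, independent exactly when $d>n$, plus B\'ezout when $d=n$) is the same in substance as the paper's use of $\Gamma_{n+1}\subsetneq\Gamma_{n}$.

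The genuine gap is reducedness in (\ref{item:3}), which you yourself flag as ``the technical heart''. There are two problems. First, the pivotal claim --- that the order of vanishing of $\rho_{V}$ at a generic point of the flex locus equals the multiplicity of the flex direction as a common zero of $f_{V,1}(p,\cdot),\dots,f_{V,n}(p,\cdot)$ --- does not follow from Theorem \ref{thm:2}: that theorem only gives the congruence $R_{V,\ell}\equiv\ell^{n!}\rho_{V}$ modulo $f_{V}$, and relating the order of vanishing of a resultant along a curve germ in coefficient space to the local multiplicities of the limiting system is a nontrivial theorem about resultants which you neither prove nor cite; it is exactly the missing ingredient. Second, the transversality condition you propose is ill-formed: at a flex point $p$ the zero locus $Z_{p}^{n}$ contains the entire flex line, being a cone (Lemma \ref{lmain}), so the $n\times(n+1)$ Jacobian matrix of $f_{V,1}(p,\cdot),\dots,f_{V,n}(p,\cdot)$ with respect to $\bfy$ has rank at most $n-1$ at every point of that cone, and ``nonvanishing of $\Jac_{\bfy}$ at the flex direction'' is impossible; the condition you actually need is that the rank is exactly $n-1$ there (equivalently, multiplicity one after cutting with a transverse linear form $\ell$). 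The paper avoids this local multiplicity analysis altogether: it constructs the universal flex polynomial $\Phi_{d}$ (Proposition \ref{prop:6}), proves that the ideal $(F,\Phi_{d})\subset K[\bfc,\bfx]$ is prime (Lemmas \ref{lemm:4}--\ref{lemm:3}, resting on irreducibility of the resultant $R_{F,y_{0}}$), so that the universal incidence scheme $Y=Z(F,\Phi_{d})\subset\P^{N}\times\P^{n}$ is a variety, and then obtains reducedness of the generic fibre of $Y\to\P^{N}$ directly from Bertini's theorem in characteristic zero \cite[Th\'eor\`eme 6.3(3)]{Jou83}. This global irreducibility-plus-Bertini step is the idea your proposal is missing; without it, or without a worked-out multiplicity theory for resultants replacing it, your proof of (\ref{item:3}) is incomplete.
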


For a cubic surface $S$ in $\P^{3}$, Salmon's degree bound is
$11\cdot 3-24 =9$. If $S$ is smooth, it contains $27$ lines and their
union is the complete intersection of $S$ with a surface of degree
$9$. The next result gives an analogous result for generic
hypersurfaces of $\P^{n}$ of degree $n$.  It is a direct consequence
of Theorem \ref{tmain2} and Corollary \ref{cor:2}.

\begin{corollary}
  \label{cor:22}
  Let $V$ be a generic hypersurface of $\P^{n}$ of degree $n$. Then
  $\cL_{V}$ is a ruled subvariety of $V$ of dimension $n-2$ of degree
  equal to 
    \begin{displaymath}
  n^{3}\, (n-1)! \, \sum_{k=2}^{n-1} \frac{1}{k},
\end{displaymath}
complete intersection of $V$ with a hypersurface of degree
$ n^{2}\, (n-1)! \, \sum_{k=2}^{n-1} \frac{1}{k}$.
\end{corollary}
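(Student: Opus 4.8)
The plan is to combine Theorem~\ref{tmain2} with Corollary~\ref{cor:2}, upgrading the degree bound of the latter to an equality and promoting the set-theoretic description of $\cL_{V}$ to a scheme-theoretic one. First I would record that a generic hypersurface $V$ of degree $n$ is irreducible and, by part~\eqref{item:3} of Theorem~\ref{tmain2}, has flex locus of dimension $n-2<n-1=\dim V$; in particular not all of its points are flexes, so by Landsberg's theorem \cite[Theorem~3]{Land} generalizing Monge--Salmon--Cayley, $V$ is not ruled. Thus $V$ has no ruled irreducible component and Corollary~\ref{cor:2} applies, yielding that $\cL_{V}$ is a ruled subvariety of $V$ of dimension $n-2$ whose degree is at most $n^{3}(n-1)!\sum_{k=2}^{n-1}\frac1k$.

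The heart of the argument is to show that this bound is attained and that $\cL_{V}$ is cut out as a complete intersection. Here I would use that for $d=n$ the union of lines $\cL_{V}$ coincides set-theoretically with the flex locus of $V$, as already observed in the text (any flex line has order of contact at least $n+1$, hence lies in $V$ by B\'ezout). The flex locus is by definition the reduced scheme associated to $\Flex(V)$; but part~\eqref{item:3} of Theorem~\ref{tmain2} asserts that for generic $V$ the scheme $\Flex(V)$ is already reduced. Therefore $\cL_{V}=\Flex(V)$ as schemes. Since $V$ is non-ruled, Corollary~\ref{cor:1} identifies $\Flex(V)$ with the complete intersection of $V$ and the hypersurface $\{\rho_{V}=0\}$, of dimension $n-2$ and degree $d^{2}\sum_{k=1}^{n}\frac{n!}{k}-d\,(n+1)!$. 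Hence $\cL_{V}$ is this same complete intersection.

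It then remains to carry out two elementary degree simplifications at $d=n$. Using the identity
\begin{equation*}
\sum_{k=1}^{n}\frac1k-\sum_{k=2}^{n-1}\frac1k=1+\frac1n,
\end{equation*}
one checks that $\deg(\Flex(V))=n^{2}\,n!\sum_{k=1}^{n}\frac1k-n\,(n+1)!$ equals $n^{2}\,n!\sum_{k=2}^{n-1}\frac1k=n^{3}(n-1)!\sum_{k=2}^{n-1}\frac1k$, which matches the bound of Corollary~\ref{cor:2} and gives the claimed degree of $\cL_{V}$. Similarly, by Theorem~\ref{thm:2} the cutting hypersurface $\{\rho_{V}=0\}$ has degree $\deg(\rho_{V})=n\cdot n!\sum_{k=1}^{n}\frac1k-(n+1)!$, which the same identity reduces to $n\cdot n!\sum_{k=2}^{n-1}\frac1k=n^{2}(n-1)!\sum_{k=2}^{n-1}\frac1k$, as asserted.

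The only genuinely delicate point is the scheme-theoretic upgrade: passing from ``$\cL_{V}$ equals the flex locus as a set'' to ``$\cL_{V}$ equals $\Flex(V)$ as a scheme, hence is a complete intersection of the stated degree'' rests entirely on the reducedness statement of Theorem~\ref{tmain2}\,\eqref{item:3}. Without it the flex scheme could be non-reduced and the degree of $\cL_{V}$ would only be bounded above, as in Corollary~\ref{cor:2}. Everything else is a direct citation of the quoted results together with the two arithmetic identities above.
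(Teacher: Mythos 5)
Your proposal is correct and takes essentially the same route as the paper: there the corollary is stated as a direct consequence of Theorem \ref{tmain2} and Corollary \ref{cor:2}, and your argument---using reducedness of $\Flex(V)$ for generic $V$ to identify $\cL_{V}$ with the complete intersection scheme of Corollary \ref{cor:1}, together with the arithmetic simplification of the degree formulas at $d=n$---is precisely that intended deduction spelled out.
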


Salmon's theorem for surfaces has been revisited several times. In
particular, in the recent book \cite{EH}, the authors reprove it by
performing suitable computations in the Chow ring of a Grassmaniann.
Our proof of this result proceeds by identifying lines with points
of $\P^n\times\P^n$ outside the diagonal, in the spirit of Salmon's original approach; 
see \cite[Articles 473 and 588, pages 94--95 and 277--278]{Salmon:tagtd} and Remark \ref{rem:2}.  
Although this seems less
natural from the point of view of intersection theory, it nevertheless
allows us to find explicit equations for the flex locus using
resultants and to prove it in a more general setting; see Theorem
\ref{thm:2}.

\medskip

The paper is organized in the following way. In Section \ref{section2}
we review the definition and properties of multidimensional resultants
that will be used in the sequel. The proof of Theorem \ref{thm:2} is
given in Section \ref{sec:affineflex}, whereas in Section
\ref{section4} and Section \ref{section6} we show that the flex
subscheme is generically reduced and that the generic flex line is
unique and has the expected order of contact, thus proving Theorem
\ref{tmain2}.

\medskip
\paragraph{\bf Acknowledgements} 
We thank Noam Solomon for several illuminating discussions on Salmon's
approach to the flex polynomial.  We also thank Marc Chardin, Mart\'i
Lahoz, Juan Carlos Naranjo and Patrice Philippon for other helpful
discussions. Part of this work was done while the authors met at the
Universitat de Barcelona, the Université de Nice - Sophia Antipolis, and
the Université de Caen.  We thank these institutions for their
hospitality.

Bus\'e and Weimann were partially supported by the CNRS research project PICS 6381 ``Diophantine
geometry and computer algebra''.  D'Andrea and Sombra were partially
supported by the MINECO research project MTM2015-65361-P, and by the
``Mar\'ia de Maeztu'' program for units of excellence in R\&D
MDM-2014-0445.

\section{Preliminaries on resultants} \label{section2}

The resultant of a family of homogeneous multivariate polynomials
plays a central role throughout this text. Therefore, in this section
we briefly review this notion and some of its basic properties.  We
refer to \cite{CLO98,Jou,GKZ94} for the proofs and more details.

We denote by $\N$ the set of nonnegative integers and by $K$ an
algebraically closed field of characteristic zero.  Boldface symbols
indicate finite sets or sequences, where the type and number should be
clear from the context. For instance, for $n\in \N$ we denote by
$\bfy$ the set of variables $\{y_{0},\dots, y_{n}\}$, so that
$K[\bfy]=K[y_{0},\dots, y_{n}]$.

Let $\bfd=(d_{0},\dots,d_{n})\in \N^{n+1}$. For  
$i=0,\dots, n$, we consider the general homogeneous polynomial of degree $d_{i}$
in the variables $\bfy$ given by 
$$
F_i=\sum_{|\bfa|=d_i}c_{i,\bfa}\bfy^{\bfa},
$$
the sum being over the vectors of
$\bfa=(a_{0},\dots, a_{n}) \in \N^{n+1} $ of length
$|\bfa|=\sum_{j=0}^{n}a_{j}=d_{i}$, and where each $c_{i,\bfa}$ is a
variable and $\bfy^{\bfa}$ stands for the monomial
$\prod_{j=0}^{n}y_{j}^{a_{j}}$.

For each $i$, set
$\bfc_{i}=\{c_{i,\bfa} \mid \bfa\in \N^{n+1} , |\bfa|=d_{i}\}$ for the
set of ${d_{i}+n\choose n}$ variables corresponding to the
coefficients of $F_{i}$, and $A=\Z[\bfc_{0},\dots, \bfc_{n}]$ for the
universal ring of coefficients. As usual, given $P\in A$ and a system
of homogeneous polynomials $g_{i}\in K[\bfy]$ of degree $d_{i}$,
$i=0,\dots, n$, we write
\begin{displaymath}
P(g_{0},\dots, g_{n}) \in K
\end{displaymath}
for the evaluation of $P$ in the coefficients of the $g_{i}$'s. 

Denote by $I$ and by $\mathfrak{m}$ the ideals of $A[\bfy]$
respectively defined by $F_{0},\dots, F_{n}$ and by
$y_{0},\dots, y_{n}$.  The {\it elimination ideal} of the system  $\bfF=(F_{0},\dots, F_{n})$ is the ideal of
$A$ defined by
\begin{displaymath}
  E_{\bfd}=\{P\in A \mid \exists k\in \N \text{ with }
  P \, \mathfrak{m}^{k}\subset I\}.
\end{displaymath}
It is a principal ideal, and the \emph{resultant} of $\bfF$, denoted
by $\Res_{\bfd}$, is defined as its unique generator satisfying the
additional condition
  \begin{equation*}
 \Res_\bfd(y_0^{d_0},\ldots,y_n^{d_n})=1.
 \end{equation*}
 It is an irreducible polynomial in the ring $A$ that is homogeneous
 of degree $ \prod_{j\ne i} d_{j}$ in each  set of variables
 $\bfc_{i}$, $i=0,\ldots,n$.

The resultant also verifies the following formula for the {descent of
  dimension} \cite[Lemme 4.8.9 and \S 5.7]{Jou}. 

 \begin{proposition}
   \label{prop:2}
With notation as above, 
\begin{displaymath}
\Res_{(d_{0},\dots, d_{n-1},d_{n})}(F_{0},\dots, F_{n-1},y_{n}^{d_{n}})=     \Res_{(d_{0},\dots, d_{n-1})}^{d_{n}},
 \end{displaymath}
 where $ \Res_{(d_{0},\dots, d_{n-1})}$ denotes the resultant of $n$
 general homogeneous polynomials in $A[y_{0},\dots, y_{n-1}]$ of
 respective degrees $d_{0},\dots, d_{n-1}$.
\end{proposition}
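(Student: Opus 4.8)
The plan is to reduce the identity to the case $d_n = 1$ and then to recognize the resultant relative to the coordinate $y_n$ as the resultant of the system restricted to the hyperplane $\{y_n = 0\}$. Write $\ov{F}_i = F_i(y_0, \dots, y_{n-1}, 0)$ for this restriction. The first step is the multiplicative reduction
\begin{displaymath}
\Res_{(d_0, \dots, d_{n-1}, d_n)}(F_0, \dots, F_{n-1}, y_n^{d_n}) = \Res_{(d_0, \dots, d_{n-1}, 1)}(F_0, \dots, F_{n-1}, y_n)^{d_n},
\end{displaymath}
which follows from the multiplicativity of the resultant in its last argument (see \cite{Jou, GKZ94}): factoring $y_n^{d_n}$ into $d_n$ copies of the linear form $y_n$ splits the resultant into a product of $d_n$ identical factors. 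It then suffices to prove the linear case, namely that $\Res_{(d_0, \dots, d_{n-1}, 1)}(F_0, \dots, F_{n-1}, y_n)$ equals $\Res_{(d_0, \dots, d_{n-1})}(\ov{F}_0, \dots, \ov{F}_{n-1})$.

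For the linear case I would compare both sides as elements of the ring of coefficients. Because $y_n$ belongs to the ideal $(F_0, \dots, F_{n-1}, y_n)$ of $A[\bfy]$, one may reduce each $F_i$ modulo $y_n$, so that this ideal coincides with $(\ov{F}_0, \dots, \ov{F}_{n-1}, y_n)$; hence the associated elimination ideal, and with it the left-hand resultant, depends only on the coefficients of the restrictions $\ov{F}_i$. Moreover, the vanishing characterization of the resultant shows that, under any specialization of the coefficients, the left-hand side vanishes precisely when $F_0, \dots, F_{n-1}, y_n$ acquire a common nontrivial zero; as such a zero must lie on $\{y_n = 0\} = \P^{n-1}$, this happens exactly when the restricted polynomials $\ov{F}_0, \dots, \ov{F}_{n-1}$ have a common zero in $\P^{n-1}$, which is in turn the vanishing condition for the right-hand side. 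Thus the left-hand resultant and the polynomial $\Res_{(d_0, \dots, d_{n-1})}(\ov{F}_0, \dots, \ov{F}_{n-1})$ define the same hypersurface in coefficient space.

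To turn this coincidence of zero loci into an identity I would invoke the properties recorded above. Since $\Res_{(d_0, \dots, d_{n-1})}$ is irreducible, the left-hand resultant must be a unit times a power of it. Comparing multidegrees pins down the power: for each $i \le n-1$, the left-hand side is homogeneous of degree $\prod_{j \ne i,\, j \le n-1} d_j$ in the coefficients $\bfc_i$ of $F_i$, which is exactly the degree of $\Res_{(d_0, \dots, d_{n-1})}$ in the coefficients of $\ov{F}_i$, so the exponent is $1$. To fix the remaining scalar, I specialize $F_i = y_i^{d_i}$ for $i = 0, \dots, n-1$; then $\ov{F}_i = y_i^{d_i}$ and both sides equal $1$ by the normalization of the resultant. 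This establishes the linear case, and combined with the multiplicative reduction it yields the stated formula, the exponent $d_n$ being produced by the first step.

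The step I expect to be most delicate is precisely this passage from equality of zero loci to the exact identity: one must make sure that the left-hand resultant truly involves only the coefficients of the $\ov{F}_i$, so that the comparison is carried out in the correct subring, and then use the irreducibility of $\Res_{(d_0, \dots, d_{n-1})}$ together with the degree count and the normalization to remove any ambiguity in the exponent and the unit. Once these bookkeeping points are settled, the remaining arguments are routine consequences of the properties of the resultant stated above.
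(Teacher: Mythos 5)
The paper itself does not prove Proposition \ref{prop:2}: it cites \cite[Lemme 4.8.9 and \S 5.7]{Jou}. Your decomposition is precisely the one implicit in that citation: multiplicativity of the resultant in its last argument (Jouanolou's \S 5.7) reduces to the case $d_{n}=1$, and the linear case $\Res_{(d_{0},\dots,d_{n-1},1)}(F_{0},\dots,F_{n-1},y_{n})=\Res_{(d_{0},\dots,d_{n-1})}(\ov{F}_{0},\dots,\ov{F}_{n-1})$ is the content of Lemme 4.8.9. Your proof of the linear case — equality of vanishing loci in coefficient space, irreducibility of the right-hand resultant, multidegree count to pin the exponent, and the normalization $F_{i}=y_{i}^{d_{i}}$ to pin the constant — is a legitimate self-contained substitute for Jouanolou's lemma, and it uses only properties recorded in Section \ref{section2} of the paper, plus multiplicativity, which the paper does not record and which you correctly source externally.

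One step, however, is wrong as argued: the elimination-ideal justification of the claim that the left-hand resultant ``depends only on the coefficients of the restrictions $\ov{F}_i$.'' The specialized resultant is by definition the image of the generic resultant under the specialization of the last block of coefficients; it is not defined as a generator of the elimination ideal of the specialized ideal. And while it is true that $(F_{0},\dots,F_{n-1},y_{n})=(\ov{F}_{0},\dots,\ov{F}_{n-1},y_{n})$, so that the associated elimination ideal depends only on the restricted system, membership of an element in that ideal says nothing about which coefficient variables the element involves: the elimination ideal is an ideal of the full ring $A$ and contains plenty of polynomials involving all of the $c_{i,\bfa}$. So this inference is a non sequitur. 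Fortunately, the claim is not needed: your divisibility argument can be carried out entirely in the full coefficient ring, where the zero loci are being compared anyway, and the resulting identity shows a posteriori that the left-hand side lies in the subring generated by the restricted coefficients. (If you want the claim up front, the correct tool is the invariance of the resultant under elementary transformations: writing $F_{i}=\ov{F}_{i}+y_{n}H_{i}$, one has $\Res(\dots,F_{i},\dots,y_{n})=\Res(\dots,F_{i}-y_{n}H_{i},\dots,y_{n})$.) Two smaller bookkeeping points in the same spirit: the paper states irreducibility of the resultant in $\Z[\bfc]$, while your Nullstellensatz comparison happens over an algebraically closed field, so you should pass to $\Q[\bfc]$ via Gauss's lemma and use that an irreducible element of this UFD generates a prime ideal (absolute irreducibility is not needed if you argue with divisibility of powers there); and the degree count uses that specializing only the last block of coefficients preserves the homogeneity of degree $\prod_{j\neq i}d_{j}$ in each block $\bfc_{i}$, $i\le n-1$, which holds because the specialized resultant is nonzero.
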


The resultant satisfies the \emph{Poisson formula} that we state
below, see \cite[Proposition 2.7]{Jou} or \cite[Theorem 3.4, Chapter 3]{CLO98} for its proof. 

\begin{proposition} \label{prop:8} Let $g_{0},g_{0}'\in K[\bfy]$ be
  homogeneous polynomials of degree $d_{0}$, and
  $g_1,\dots, g_{n} \in K[\bfy]$ homogeneous polynomials of respective
  degrees $d_{1},\dots, d_{n}$ with a finite number of common zeros in
  $\P^{n}$.  For each common zero $\eta \in \P^{n}$ of $g_1,\dots, g_{n}$, let
  $m_{\eta}$ denote its multiplicity. Then
\begin{equation*}
  \label{eq:13}
\Res_{\bfd}(g_0,g_{1},\ldots,g_n) \prod_{\eta}{g_0'(\eta)^{m_{\eta}}} = \Res_{\bfd}(g_0',g_{1},\ldots,g_n)
 \prod_{\eta}{g_0(\eta)^{m_{\eta}}},
\end{equation*}
both products being over the set of common zeros of $g_1,\ldots,g_n$
in $\P^{n}$.
 \end{proposition}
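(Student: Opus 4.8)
The plan is to prove the equivalent statement that, with $g_{1},\dots,g_{n}$ held fixed, the ratio $\Res_{\bfd}(g_{0},g_{1},\dots,g_{n})/\prod_{\eta}g_{0}(\eta)^{m_{\eta}}$ does not depend on $g_{0}$; the displayed identity is then just the cross-multiplied form comparing $g_{0}$ with $g_{0}'$. To this end I would regard the resultant as a polynomial in the coefficients of its first argument alone. Writing $N=\prod_{j=1}^{n}d_{j}$, recall from Section~\ref{section2} that $\Res_{\bfd}$ is homogeneous of degree $\prod_{j\neq 0}d_{j}=N$ in the coefficient set $\bfc_{0}$; hence, after specializing $g_{1},\dots,g_{n}$, the map
\[
R\colon g_{0}\longmapsto \Res_{\bfd}(g_{0},g_{1},\dots,g_{n})
\]
is a nonzero homogeneous polynomial of degree $N$ in the coefficients of $g_{0}$. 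On the other hand, since $g_{1},\dots,g_{n}$ are $n$ forms in $\P^{n}$ with finite common zero locus, B\'ezout's theorem gives $\sum_{\eta}m_{\eta}=N$, so the numerator and denominator above have matching total degree in the coefficients of $g_{0}$.

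The first key step is to factor $R$ into the linear forms $g_{0}\mapsto g_{0}(\eta)$. The fundamental vanishing property of the resultant says that a specialized system has zero resultant exactly when it has a common projective zero; thus $R(g_{0})=0$ precisely when $g_{0}(\eta)=0$ for some common zero $\eta$ of $g_{1},\dots,g_{n}$. Each evaluation $g_{0}\mapsto g_{0}(\eta)$ is a nonzero linear form in the coefficients of $g_{0}$, and for $d_{0}\ge 1$ distinct points give distinct (non-proportional) such forms. Hence the zero locus of $R$ is the union of the corresponding hyperplanes, and since $R$ is a polynomial cutting out exactly this union I conclude
\[
R(g_{0})=c\prod_{\eta}g_{0}(\eta)^{a_{\eta}}
\]
for a nonzero scalar $c$ independent of $g_{0}$ and positive integers $a_{\eta}$ with $\sum_{\eta}a_{\eta}=N$.

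The main obstacle is to identify the algebraic exponents $a_{\eta}$ with the geometric intersection multiplicities $m_{\eta}$; this is really the substantive content of the formula. When all common zeros are simple there are exactly $N$ distinct hyperplanes, so the degree count $\sum_{\eta}a_{\eta}=N$ forces $a_{\eta}=1=m_{\eta}$. For the general case I would argue by specialization: deform $g_{1},\dots,g_{n}$ within their coefficient spaces from a nearby generic system whose simple zeros collide at $\eta$, and use continuity of the factorization of $R$ in the coefficients together with the additivity of intersection multiplicity under such a degeneration to see that the exponents of the coalescing linear forms sum to $m_{\eta}$. Controlling the behaviour of $R$ and of the zero multiplicities along this degeneration is the technical heart; alternatively one may use the local description $m_{\eta}=\dim_{K}\mathcal{O}_{\P^{n},\eta}/(g_{1},\dots,g_{n})$ and compute the order of vanishing of $R$ along $\{g_{0}(\eta)=0\}$ directly.

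Once the factorization $R(g_{0})=c\prod_{\eta}g_{0}(\eta)^{m_{\eta}}$ is established with $c$ depending only on $g_{1},\dots,g_{n}$, the proposition follows at once. Indeed, applying it to both $g_{0}$ and $g_{0}'$ and eliminating the common factor $c$ yields
\[
\Res_{\bfd}(g_{0},g_{1},\dots,g_{n})\prod_{\eta}g_{0}'(\eta)^{m_{\eta}}
=\Res_{\bfd}(g_{0}',g_{1},\dots,g_{n})\prod_{\eta}g_{0}(\eta)^{m_{\eta}},
\]
which is exactly the asserted identity.
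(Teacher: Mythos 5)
Your reduction and the reduced case are handled correctly: with $g_1,\dots,g_n$ fixed, $R(g_0)=\Res_{\bfd}(g_0,g_1,\dots,g_n)$ is a nonzero homogeneous polynomial of degree $N=\prod_{j\geq 1}d_j$ in the coefficients of $g_0$, the vanishing criterion for the resultant identifies its zero locus with the union of the pairwise distinct hyperplanes $\{g_0 \mid g_0(\eta)=0\}$, the Nullstellensatz then yields $R=c\prod_\eta g_0(\eta)^{a_\eta}$ with $c\neq 0$, $a_\eta\geq 1$ and $\sum_\eta a_\eta=N$, and B\'ezout settles the case where all $m_\eta=1$. Note also that the paper offers no proof to compare against: it quotes the proposition from \cite[Proposition 2.7]{Jou} and \cite[Theorem 3.4, Chapter 3]{CLO98}.

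The genuine gap is the step you yourself flag as the technical heart: proving $a_\eta=m_\eta$ when the common zeros are not all simple; neither of your two suggestions, as written, is a proof. The degeneration route needs two nontrivial inputs that you only name: (i) conservation of number, i.e.\ that in a family specializing a generic system to $(g_1,\dots,g_n)$ exactly $m_\eta$ of the $N$ simple zeros coalesce at $\eta$ --- a flatness/continuity-of-intersection-numbers theorem of essentially the same depth as the identity being proved --- and (ii) a precise meaning for ``continuity of the factorization'' of $R$ over an abstract algebraically closed field, where there are no limits to take; making this rigorous requires working over a discrete valuation ring such as $K[[t]]$ and tracking how the linear factors specialize at $t=0$. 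The alternative you mention, computing the order of vanishing of $R$ along $\{g_0(\eta)=0\}$ directly, is exactly what the cited proofs do, and it is not a side remark but the substantive content of the Poisson formula: after a linear change of coordinates placing all common zeros in the chart $(y_0\neq 0)$, one proves
$\Res_{\bfd}(g_0,\dots,g_n)=\Res_{(d_1,\dots,d_n)}(\ov{g}_1,\dots,\ov{g}_n)^{d_0}\,\det(m_{g_0})$,
where $\ov{g}_i=g_i(0,y_1,\dots,y_n)$ and $m_{g_0}$ is multiplication by the dehomogenization of $g_0$ on the Artinian algebra $A=K[y_1,\dots,y_n]/(g_1^{\mathrm{aff}},\dots,g_n^{\mathrm{aff}})$; the decomposition $A\simeq\prod_\eta A_\eta$ with $\dim_K A_\eta=m_\eta$, together with the nilpotency of $g_0-g_0(\eta)$ on $A_\eta$, then gives $\det(m_{g_0})=\prod_\eta g_0(\eta)^{m_\eta}$, which is precisely the exponent identification you are missing. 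Without that ingredient (or a fully justified degeneration argument), your write-up establishes the proposition only when all common zeros of $g_1,\dots,g_n$ are simple.
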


 A fundamental property of resultants is that their vanishing
 characterizes the systems of $n+1$ homogeneous polynomials in $n+1$
 variables that are degenerate, in the sense that their zero set in
 $\P^{n}$ is nonempty. Precisely, a system of homogeneous polynomials
 $g_{0},\dots, g_{n}\in K[\bfy]$ of respective degrees
 $d_{0}, \dots, d_{n}$, has a common zero in $\P^{n}$ if and only if
 $\Res_{\bfd}(g_{0},\dots, ,g_{n})=0$.

 The following result gives a criterion to decide if a such degenerate
 system has a unique zero and, if it does, allows to compute it, see
 \cite[Lemma 4.6.1]{Jou} or \cite[Corollary
 4.7]{JeronimoKrickSabiaSombra:cccf} for its proof.

\begin{proposition}
  \label{prop:9}
  Let $g_{0},\dots, g_{n}\in K[\bfy]$ be homogeneous polynomials of
  respective degrees $d_{0},\dots, d_{n}$. Suppose that
  $\Res_{\bfd}(g_{0},\dots, ,g_{n})=0$ and that there is $0\le i_{0}\le n$
  and $\bfa_{0}\in \N^{n+1}$ with $|\bfa_{0}|=d_{i_{0}}$ such that
  \begin{displaymath}
    \frac{\partial \Res_{\bfd}}{\partial c_{i_{0},\bfa_{0}}} (g_{0},\dots,
  ,g_{n})\ne 0.
  \end{displaymath}
  Then the zero set of $g_{0},\dots, g_{n}$ in $\P^{n}$ consists of a
  single point $\eta$, and for $i=0,\dots, n$, 
\begin{equation*}
(\eta^{\bfa})_{|\bfa|=d_{i}} = \Big( \frac{\partial \Res_{\bfd}}{\partial
c_{i,\bfa}} (g_{0},\dots,
  ,g_{n})\Big) _{|\bfa|=d_{i}} \in \P^{{d_{i}+n\choose n}}
\end{equation*}
{ where the coordinates of these projective points are indexed by the
vectors $\bfa \in \N^{n+1}$ with $|\bfa| =d_{i}$.}
\end{proposition}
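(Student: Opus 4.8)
The plan is to derive a product expression for the resultant as a function of the coefficients of a single one of the polynomials and then to read off $\eta$ from the first‑order behaviour of this product; the engine is the Poisson formula of Proposition~\ref{prop:8}. By the invariance of $\Res_{\bfd}$ under permutation of its arguments (up to sign and the corresponding reordering of $\bfd$), I may assume the distinguished index is $i_0=0$, so the hypothesis reads $\partial\Res_{\bfd}/\partial c_{0,\bfa_0}(g_0,\dots,g_n)\ne 0$ for some $\bfa_0$ with $|\bfa_0|=d_0$. The first step is to show that $g_1,\dots,g_n$ have only finitely many common zeros in $\P^n$. Indeed, if their common zero locus had dimension $\ge 1$ it would meet every hypersurface of $\P^n$, so by the characterization of the vanishing of the resultant one would have $\Res_{\bfd}(h,g_1,\dots,g_n)=0$ for \emph{every} homogeneous $h$ of degree $d_0$; viewing $\Res_{\bfd}$ as a polynomial in the slot‑$0$ coefficients with the others specialized to $g_1,\dots,g_n$, this forces it to vanish identically, whence all its slot‑$0$ partials vanish at $(g_0,\dots,g_n)$, contradicting the hypothesis.

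With $g_1,\dots,g_n$ zero‑dimensional, Proposition~\ref{prop:8} shows that $\Res_{\bfd}(h,g_1,\dots,g_n)\big/\prod_{\zeta}h(\zeta)^{m_\zeta}$ is independent of $h$, the product being over the common zeros $\zeta$ of $g_1,\dots,g_n$ with multiplicities $m_\zeta$; choosing an $h$ nonvanishing at every $\zeta$ shows the common value $C$ is a nonzero scalar. Writing $h=\sum_{|\bfa|=d_0}u_{\bfa}\bfy^{\bfa}$, the equality $\Res_{\bfd}(h,g_1,\dots,g_n)=C\prod_\zeta h(\zeta)^{m_\zeta}$ exhibits the resultant as a nonzero constant times a product of powers of the linear forms $h(\zeta)=\sum_{\bfa}u_{\bfa}\zeta^{\bfa}$. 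Differentiating in $u_{\bfa}$ and evaluating at $u=$ the coefficients of $g_0$ yields
\[
\frac{\partial\Res_{\bfd}}{\partial c_{0,\bfa}}(g_0,\dots,g_n)=C\sum_{\zeta}m_\zeta\,\zeta^{\bfa}\,g_0(\zeta)^{m_\zeta-1}\prod_{\zeta'\ne\zeta}g_0(\zeta')^{m_{\zeta'}}.
\]
Since $\Res_{\bfd}(g_0,\dots,g_n)=C\prod_\zeta g_0(\zeta)^{m_\zeta}=0$, the form $g_0$ vanishes at some $\zeta$; a term of the sum survives only when $g_0(\zeta')\ne 0$ for all $\zeta'\ne\zeta$ and $g_0(\zeta)^{m_\zeta-1}\ne 0$. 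As the hypothesis makes the sum nonzero for $\bfa=\bfa_0$, there is exactly one common zero $\eta$ of $g_1,\dots,g_n$ with $g_0(\eta)=0$, and necessarily $m_\eta=1$. Hence the full system $g_0,\dots,g_n$ has the unique common zero $\eta$, only the $\zeta=\eta$ term remains, and the display collapses to $\partial\Res_{\bfd}/\partial c_{0,\bfa}(g_0,\dots,g_n)=\lambda\,\eta^{\bfa}$ with $\lambda=C\prod_{\zeta'\ne\eta}g_0(\zeta')^{m_{\zeta'}}\ne 0$. This proves the uniqueness of $\eta$ and the asserted identity for the slot $i_0$.

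It remains to obtain the identity for the other slots, and here lies the main obstacle. The computation above applies verbatim to any index $i$ for which $\{g_j\}_{j\ne i}$ is zero‑dimensional with $\eta$ a simple zero among its common zeros — equivalently, for which the slot‑$i$ partials do not all vanish at $(g_0,\dots,g_n)$ — and by permutation‑invariance this is once more the case already treated. The difficulty is to secure this nondegeneracy \emph{simultaneously} for every $i$: the bare hypothesis only guarantees it for $i_0$. Writing $\nabla g_{i_0}(\eta)=\sum_{j\ne i_0}\alpha_j\,\nabla g_j(\eta)$ in the basis of the hyperplane $\eta^{\perp}$ furnished by the transversality of $\{g_j\}_{j\ne i_0}$ at $\eta$ (the content of $m_\eta=1$, together with the Euler relations $\nabla g_j(\eta)\cdot\eta=0$), one checks that slot $i$ is nondegenerate exactly when $\alpha_i\ne 0$; simultaneous nonvanishing for all $i$ is the statement that the gradients $\nabla g_0(\eta),\dots,\nabla g_n(\eta)$ are in general position in $\eta^{\perp}$, i.e.\ that $\eta$ is a simple common zero of each of the $n+1$ subsystems obtained by deleting one equation. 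This general‑position property is automatic at a generic point of the resultant hypersurface $\{\Res_{\bfd}=0\}$, and I would establish the full statement either by proving the formula over that generic point — where all $n+1$ subsystems are transversal and the slot‑by‑slot identity holds for every $i$ — and specializing on each slot that stays nondegenerate, or by verifying the transversality of the relevant subsystems directly in the situation to which the proposition is applied.
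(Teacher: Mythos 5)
First, note that the paper itself does not prove Proposition \ref{prop:9}: it is quoted with references to \cite[Lemma 4.6.1]{Jou} and \cite[Corollary 4.7]{JeronimoKrickSabiaSombra:cccf}, so there is no internal proof to compare against; your Poisson-formula route is in any case the standard one. Your core computation is correct and complete: after reducing to $i_0=0$, the hypothesis forces $g_1,\dots,g_n$ to have finitely many common zeros; Proposition \ref{prop:8} then gives the product formula $\Res_{\bfd}(h,g_1,\dots,g_n)=C\prod_{\zeta}h(\zeta)^{m_{\zeta}}$, valid identically in the coefficients of $h$ with $C\ne 0$; differentiating and evaluating at $g_0$ shows that $g_0$ vanishes at exactly one common zero $\eta$ of $g_1,\dots,g_n$, that $m_{\eta}=1$, and that the slot-$i_0$ gradient equals $\lambda\,(\eta^{\bfa})_{|\bfa|=d_{i_0}}$ with $\lambda\ne 0$. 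That is exactly the part of the proposition the paper actually uses: in the proof of Proposition \ref{prop:3} it is invoked for the single slot carrying the nonvanishing partial derivative, in order to recover the common zero from that slot via the Veronese embedding.

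The step where you stall --- the displayed identity for every $i$, not only $i_0$ --- cannot be closed by the genericity-plus-specialization schemes you sketch, because in that strength the statement is false. Take $n=1$, $g_0=y_0$, $g_1=y_0^2$, so that writing $g_0=a_0y_0+a_1y_1$ and $g_1=b_0y_0^2+b_1y_0y_1+b_2y_1^2$ one has $\Res_{\bfd}=b_0a_1^2-b_1a_0a_1+b_2a_0^2$. The hypothesis holds with $i_0=1$ (the partial with respect to $b_2$ evaluates to $a_0^2=1$), the unique common zero is $\eta=(0:1)$, and the slot-$1$ gradient $(0,0,1)$ is the degree-$2$ Veronese point of $\eta$, as it must be; but the slot-$0$ gradient is $(-b_1a_1+2b_2a_0,\;2b_0a_1-b_1a_0)=(0,0)$, which is not a point of $\P^{1}$ at all, so the asserted equality fails for $i=0$. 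The failure is exactly the degeneracy you isolated: $\eta$ is a double zero of the deleted system $\{g_1\}$, so slot $0$ is degenerate, and no genericity or transversality argument can exclude this, since it is a legitimate input to the proposition. The correct general formulation --- the one your own argument already delivers --- is the proportionality statement: for every $i$ and all $\bfa,\bfa'$ with $|\bfa|=|\bfa'|=d_i$,
\begin{equation*}
\frac{\partial\Res_{\bfd}}{\partial c_{i,\bfa}}(g_{0},\dots,g_{n})\,\eta^{\bfa'}
=\frac{\partial\Res_{\bfd}}{\partial c_{i,\bfa'}}(g_{0},\dots,g_{n})\,\eta^{\bfa},
\end{equation*}
that is, each slot gradient is either zero or a nonzero multiple of the corresponding Veronese point. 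This follows with no new input: apply your slot computation to every $i$ whose gradient is nonzero, and note that slots with vanishing gradient satisfy the proportionality trivially. So the right fix is to weaken the conclusion for degenerate slots rather than to chase a simultaneous nondegeneracy that can genuinely fail; with that emendation your proof is complete, and it covers every use made of the proposition in the paper.
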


\section{The equation of the flex locus}\label{sec:affineflex}

In this section, we obtain an explicit equation 
for the flex locus of a projective hypersurface by means of resultants. Using this equation,
we define the flex scheme and  we compute its dimension, the degree of its 
defining equations and its degree, thus giving the proof of Theorem \ref{thm:2}.

\begin{definition}
  \label{def:1}
  Let $V$ be a subvariety of $\P^n$ and  $p$ a point $V$. For a line 
  $L$  of $ \P^n$  {containing} $p$, its  \emph{order of contact} with $V$
  at $p$ is defined as
\begin{equation*}
\ord_p(V,L)=\dim_{ K}(\mathcal{O}_{L,p}/\iota^*\mathcal{I}_{V}),
\end{equation*} 
where $\mathcal{O}_{L,p}$ is the local ring of $L$ at $p$,
$\mathcal{I}_V$  the ideal sheaf of $V$, and
$\iota\colon L\hookrightarrow \P^n$ the inclusion map.
\end{definition}

The order of contact of a line is either a positive integer or
$+\infty$.  We have that $\ord_p(V,L)= 1$ if and only if $L$
intersects $V$ transversally at~$p$, and $\ord_p(V,L)=+\infty$ if and
only if $L$ is contained in $V$.

For the rest of this section, we assume that $V$ is a (non necessarily
irreducible) hypersurface of degree $d\ge1$. Fix then a defining
polynomial $f_{V}$ of $V$, that is, a homogeneous polynomial in
$K[\bfx]= K[x_{0},\dots, x_{n}]$ of minimal degree such that $V$
coincides with $Z(f_{V})$, the set of zeros of $f_{V}$ in $\P^{n}$.  Such a
polynomial is squarefree, and unique up to a nonzero scalar factor.

The next lemma translates the notion of order of contact with the
hypersurface $V$ into algebraic terms. Given a variable $t$, we denote
by $\val_t$ the $t$-adic valuation in the local ring
$ K[t]_{(0)}\simeq \cO_{\A^{1},0}${: for $h \in K[t]_{(0)}$ written as
$h_{1}/h_{2}$ with $h_{1},h_{2}\in K[t]$ and $h_{2}$ not vanishing at
the point $0$,  $\val_{t}(h)$ is defined as the least exponent
appearing in the nonzero monomials of the polynomial $h_{1}$.}

\begin{lemma}\label{lkey}
  Let $p\in V$ and $L$ a line of $\P^{n}$ containing $p$. Let
  $\varphi\colon \A^{1}\to \P^{n}$ be an affine map parameterizing a
  neighborhood of $p$ in $L$, and such that $\varphi(0)=p$. Write
  $\varphi=(\ell_{0},\dots, \ell_{n})$ with $\ell_{i}\in K[t]$ an
  affine polynomial, $i=0,\dots, n$.  Then
$$
\ord_p(V,L)=\val_t (f_{V} (\ell_{0},\dots, \ell_{n})).
$$
\end{lemma}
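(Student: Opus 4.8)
The plan is to use that a line $L\cong\P^{1}$ is smooth, so that $\cO_{L,p}$ is a discrete valuation ring, and to transport the whole computation to $K[t]_{(0)}$ through the parameterization $\varphi$, where the length of the relevant quotient becomes the $t$-adic valuation of the pulled-back equation. First I would note that, since $\varphi$ maps a neighborhood of $0$ in $\A^{1}$ isomorphically onto a neighborhood of $p$ in $L$ with $\varphi(0)=p$, it induces an isomorphism of local rings $\varphi^{*}\colon \cO_{L,p}\xrightarrow{\sim}\cO_{\A^{1},0}=K[t]_{(0)}$ carrying the maximal ideal of $\cO_{L,p}$ to $(t)$. In particular $t$ is a uniformizer and $K[t]_{(0)}$ is a discrete valuation ring with valuation $\val_{t}$.

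Next I would identify the pulled-back ideal sheaf $\iota^{*}\cI_{V}$. Since $V$ is a hypersurface, $\cI_{V}$ is invertible; choosing an index $j_{0}$ with $x_{j_{0}}(p)\neq 0$ and working in the affine chart $\{x_{j_{0}}\neq 0\}$, the stalk of $\cI_{V}$ at $p$ is generated by the dehomogenization $\wt{f}$ of $f_{V}$ in that chart. Hence $\iota^{*}\cI_{V}$ is the ideal of $\cO_{L,p}$ generated by the restriction $\iota^{*}\wt{f}$. The key computation is then the homogeneity identity
\begin{equation*}
\varphi^{*}(\iota^{*}\wt{f})=\wt{f}(\ell_{0}/\ell_{j_{0}},\dots,\ell_{n}/\ell_{j_{0}})=\ell_{j_{0}}^{-d}\,f_{V}(\ell_{0},\dots,\ell_{n}),
\end{equation*}
where the middle term is read in the affine-chart coordinates $x_{i}/x_{j_{0}}$. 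Since $\ell_{j_{0}}(0)=x_{j_{0}}(p)\neq 0$, the factor $\ell_{j_{0}}^{-d}$ is a unit in $K[t]_{(0)}$, so the ideal generated by $\varphi^{*}(\iota^{*}\wt{f})$ coincides with the principal ideal generated by $f_{V}(\ell_{0},\dots,\ell_{n})$.

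Combining these two steps yields an isomorphism $\cO_{L,p}/\iota^{*}\cI_{V}\cong K[t]_{(0)}/\bigl(f_{V}(\ell_{0},\dots,\ell_{n})\bigr)$. Setting $h=f_{V}(\ell_{0},\dots,\ell_{n})$, if $h\neq 0$ I would write $h=t^{m}u$ with $u$ a unit of $K[t]_{(0)}$ and $m=\val_{t}(h)$, so that the quotient is isomorphic to $K[t]/(t^{m})$, which has dimension $m$ over $K$; if $h=0$, equivalently $L\subset V$, then the quotient is all of $\cO_{L,p}$ and both $\ord_{p}(V,L)$ and $\val_{t}(h)$ equal $+\infty$. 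In every case $\ord_{p}(V,L)=\val_{t}(h)$, as claimed.

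The main obstacle is the careful bookkeeping in the second step: passing from the homogeneous polynomial $f_{V}$ to its dehomogenization in the chart, and verifying that this only introduces the unit factor $\ell_{j_{0}}^{-d}$, so that neither the generated ideal nor the valuation is altered. Once this is settled, the remainder is the standard fact that a discrete valuation ring quotiented by an element of valuation $m$ has length $m$ over $K$.
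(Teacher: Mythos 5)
Your proof is correct and follows essentially the same route as the paper's: dehomogenize $f_V$ in an affine chart containing $p$, use the homogeneity relation $f_V = x_{j_0}^d\,\wt f(\,\cdot\,)$ to see that pulling back along $\varphi$ only introduces the unit factor $\ell_{j_0}^{\pm d}$, and identify the resulting $t$-adic valuation with the order of contact. The only difference is one of detail: you make explicit what the paper compresses into ``follows from the definition of the order of contact'' --- namely the local-ring isomorphism $\cO_{L,p}\simeq K[t]_{(0)}$, the length computation in a discrete valuation ring, and the case $L\subset V$ where both sides are $+\infty$ --- which is a welcome elaboration, not a divergence.
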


\begin{proof}
  Up to a reordering of the homogeneous coordinates of $\P^{n}$, we
  can suppose that $\ell_0(0)\ne 0$. Let $\wt f_V$ denote the
  dehomogenization of $f_{V}$ in the chart $(x_0\ne 0)\simeq
  \A^{n}$. Using the relation
$$
f_{V}=x_0^{d} \, \wt f_V\Big(\frac{x_1}{x_0},\ldots,\frac{x_n}{x_0}\Big)
$$
and  the fact that  $\val_t (\ell_0) = 0$, we obtain
\begin{equation*}
\val_t (f_{V} (\ell_{0},\dots, \ell_{n})) = \val_t
\Big(\wt f_V\Big(\frac{\ell_1}{\ell_0},\ldots,\frac{\ell_n}{\ell_0}\Big)\Big)
= \ord_p(V,L),
\end{equation*}
where the second equality follows from the definition of the order of
contact, and the fact that $\wt f_V$ is a local equation for the germ of hypersurface
$(V,p)$ and $\varphi$ a parametrization of the germ of line $(L,p)$.
\end{proof}

\begin{definition}
  \label{def:2}
  Let $p\in  V$. The \textit{order of osculation} of $V$
  at $p$ is defined as
\begin{equation*}
\mu_p(V)=\sup_{L} \, \ord_p(V,L),
\end{equation*} 
where the supremum is taken over the lines $L$ of $\P^n$ {
    containing} $p$. The point $p$ is a \emph{flex point} of $V$
  whenever $\mu_p(V)\ge n+1$. A line $L$ with order of contact with
  $V$ at $p$ at least $n+1$ is called a \emph{flex line}.
\end{definition}

Consider again the group of variables $\bfy=\{y_{0},\dots, y_{n}\}$
and a further variable $t$,
and let $f_{V,k}$, $k=0,\dots, d$, be the family of polynomials in
$K[\bfx,\bfy]$ determined by the expansion
\begin{equation} \label{eq:14}
f_{V}(\bfx+t\bfy) =\sum_{k=0}^d f_{V,k}(\bfx,\bfy)\frac{t^k}{k!}.
\end{equation}
For $k=0,\dots, d$, 
\begin{equation*}
  f_{V,k}(\bfx,\bfy)=\sum_{0\le i_1,\ldots, i_k\le n} \frac{\partial^{k}f}{\partial
    x_{i_{1}}\cdots \partial x_{i_k}}(\bfx) \, y_{i_1}\cdots y_{i_k} .
\end{equation*}
In particular, $f_{V,k}$ is bihomogeneous of bidegree $(d-k,k)$.

For a point $p\in \P^n$ and each $k\in \N$,  consider the
subvariety of $\P^{n}$ defined as
\begin{equation*}
Z_{p}^k=\{q \in \P^n \mid f_{V,1}(p,q)=\cdots =f_{V,k}(p,q)=0\}.  
\end{equation*}
The next lemma shows that the order of osculation of $V$ at $p$ can be
read from the dimensions of these subvarieties.

\begin{lemma}\label{lmain} 
Let $p\in V$. 
\begin{enumerate}
\item \label{item:1} For each $k \in \N$, the subvariety
  $Z_{p}^k\subset \P^n$ is a cone centered at $p$, union of the lines
  having  order of contact  with $V$ at $p$ greater than $k$.
\item \label{item:2} The order of osculation of $V$ at $p$ is the
  least $k\in \N$ such that $ Z_{p}^k = \{p\}$.
\end{enumerate}
\end{lemma}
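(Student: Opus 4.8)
The plan is to deduce both parts from a single identity: combining Lemma~\ref{lkey} with the expansion \eqref{eq:14}, I translate the order of contact of a line through $p$ into the vanishing of the forms $f_{V,j}(p,\cdot)$. Concretely, fix representatives in $K^{n+1}$ of $p$ and of a point $q\in\P^n$ with $q\ne p$, and parametrize the line $L=\overline{pq}$ by the affine map $\varphi(t)=p+tq$. Since $p$ and $q$ are linearly independent, $\varphi$ meets the hypotheses of Lemma~\ref{lkey} (its components $\ell_i=p_i+tq_i$ are affine and $\varphi(0)=p$), so $\ord_p(V,L)=\val_t(f_V(p+tq))$. Specializing \eqref{eq:14} at $\bfx=p$, $\bfy=q$ gives $f_V(p+tq)=\sum_{k=0}^d f_{V,k}(p,q)\,t^k/k!$, and as $f_{V,0}(p,q)=f_V(p)=0$ this valuation is the least $k\ge 1$ with $f_{V,k}(p,q)\ne 0$ (and is $+\infty$ precisely when $L\subset V$). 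Hence, for $q\ne p$, the inequality $\ord_p(V,L)>k$ holds if and only if $f_{V,1}(p,q)=\dots=f_{V,k}(p,q)=0$, that is, $q\in Z_p^k$.

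For part~\eqref{item:1}, I first record that $p\in Z_p^k$ for every $k$: from $f_V((1+t)p)=(1+t)^df_V(p)=0$ one reads $f_{V,j}(p,p)=0$ for all $j$. Since for $q\ne p$ the condition $q\in Z_p^k$ depends only on the line $\overline{pq}$, the set $Z_p^k$ is a union of lines through $p$, i.e.\ a cone with vertex $p$, and by the equivalence above it is exactly the union of those lines through $p$ whose order of contact with $V$ at $p$ exceeds $k$.

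For part~\eqref{item:2}, I would use the descending chain $Z_p^0\supseteq Z_p^1\supseteq\cdots$ (each $Z_p^{k+1}$ is cut out by one more equation than $Z_p^k$) together with part~\eqref{item:1}: the equality $Z_p^k=\{p\}$ means that no line through $p$ has order of contact greater than $k$, i.e.\ $\mu_p(V)\le k$. Because any finite order of contact is at most $d=\deg f_V$ (the polynomial $f_V(p+tq)$ has degree at most $d$ in $t$), the supremum $\mu_p(V)$ is attained when finite; thus the least $k$ with $Z_p^k=\{p\}$ equals $\mu_p(V)$, while if some line through $p$ lies in $V$ then no such $k$ exists and $\mu_p(V)=+\infty$. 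The points requiring care are of a bookkeeping rather than conceptual nature: verifying the hypotheses of Lemma~\ref{lkey} for $\varphi$, treating the vertex $p$ and the degenerate cone $Z_p^k=\{p\}$ correctly in the cone description, and handling uniformly the case of a line contained in~$V$, where the order of contact and $\mu_p(V)$ are both $+\infty$.
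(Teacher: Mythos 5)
Your proposal is correct and follows essentially the same route as the paper: both use Lemma~\ref{lkey} together with the expansion \eqref{eq:14} to identify, for $q\ne p$, the condition $q\in Z_p^k$ with $\ord_p(V,L)>k$ for $L=\overline{pq}$, verify $p\in Z_p^k$ via $f_V((1+t)\bfp)=(1+t)^d f_V(\bfp)$, and deduce part~\eqref{item:2} from part~\eqref{item:1}. Your extra care about attainment of the supremum (finite orders of contact being bounded by $d$) and the $+\infty$ case only makes explicit what the paper leaves implicit.
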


\begin{proof} 
Fix $k\in \N$ and choose a representative $\bfp\in K^{n+1}\setminus
\{\bfzero\}$ of  the
  point $p \in \P^{n}$. We have that   $f_{V}(\bfp+t\bfp)=(1+t)^d f_{V}(\bfp)$ and so, for all
  $j \in \N$, 
  \begin{displaymath}
    f_{V,j}(\bfp,\bfp)=d(d-1)\cdots (d-j) f_{V}(\bfp)=0.
  \end{displaymath}
Hence $p\in Z_{p}^{k}$.

Let $q\in \P^{n}$ be a point different from $p$ and $L$ { the unique line
containing} $p$ and $q$. This line is parametrized by the affine map
$\varphi\colon \A^{1}\to \P^{n}$ defined by
$\varphi(t)= \bfp+ \bfq\, t$ for any choice of representatives
$\bfp, \bfq\in K^{n+1}\setminus \{\bfzero\}$ of $p$ and $q$.
Lemma~\ref{lkey} combined with the expansion \eqref{eq:14} implies
that the condition $q\in Z_{p}^k $ is equivalent to
$ \ord_{p}(V,L) > k$.  Hence $q$ lies in $Z_{p}^k $ if and only if the
line $L$ is contained in this subvariety and has order of contact with
$V$ at $p$ greater than $k$, which proves \eqref{item:1}.

By definition, the order of osculation of $V$ at $p$ is the least
$k\in \N$ such that there is no line $L$ with $ \ord_{p}(V,L) >
k$. Hence \eqref{item:2} is a consequence of \eqref{item:1}.
\end{proof}

The following corollary follows directly from Lemma \ref{lmain} and
the definition of flex points.

\begin{corollary}\label{cor:regular} 
A point $p\in V$ is a flex point if and only if $Z_p^n \ne \{p\}$
  or, equivalently, if and only if $ \dim (Z_p^n)\ge 1$.
\end{corollary}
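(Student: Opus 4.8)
The plan is to read the statement off directly from Lemma \ref{lmain}, exploiting the monotonicity of the family $(Z_p^k)_k$. First I would observe that $Z_p^{k+1}$ is cut out inside $\P^n$ by the equations defining $Z_p^k$ together with the single additional equation $f_{V,k+1}(p,\cdot)=0$, so that these subvarieties form a decreasing chain $Z_p^0 \supseteq Z_p^1 \supseteq \cdots$, each term containing $p$ by the computation carried out in the proof of Lemma \ref{lmain}. Consequently, once some $Z_p^m$ equals $\{p\}$, every subsequent $Z_p^k$ with $k \geq m$ also equals $\{p\}$.

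Next I would translate the flex condition. By Definition \ref{def:2}, the point $p$ is a flex point if and only if $\mu_p(V) \geq n+1$, and by Lemma \ref{lmain}\eqref{item:2}, $\mu_p(V)$ is the least $k$ for which $Z_p^k = \{p\}$. Combining this with the monotonicity just noted, the inequality $\mu_p(V) \geq n+1$ holds precisely when $Z_p^k \neq \{p\}$ for all $k \leq n$, which by the nesting is equivalent to the single condition $Z_p^n \neq \{p\}$. This yields the first equivalence in the statement.

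For the equivalence with $\dim(Z_p^n) \geq 1$, I would invoke the cone structure provided by Lemma \ref{lmain}\eqref{item:1}. If $Z_p^n \neq \{p\}$, then it contains some point $q \neq p$; since $Z_p^n$ is a cone centered at $p$, it must then contain the entire line through $p$ and $q$, so $\dim(Z_p^n) \geq 1$. The converse is immediate, since a subvariety of dimension zero containing $p$ reduces to $\{p\}$.

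The argument is essentially a bookkeeping exercise built on the previous lemma, so I do not expect a serious obstacle; the only point requiring care is the monotonicity step, which ensures that the single condition ``$Z_p^n \neq \{p\}$'' faithfully encodes ``$Z_p^k \neq \{p\}$ for every $k \leq n$''. This is precisely what allows the order-of-osculation threshold $n+1$ to be detected by the single subvariety $Z_p^n$, and hence what makes the corollary follow directly from Lemma \ref{lmain} and the definition of flex points.
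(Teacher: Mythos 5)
Your proof is correct and follows exactly the route the paper intends: the paper states this corollary as a direct consequence of Lemma \ref{lmain} and Definition \ref{def:2}, and your argument is precisely that deduction written out in full (the nesting of the $Z_p^k$, the threshold characterization via Lemma \ref{lmain}\eqref{item:2}, and the cone structure from Lemma \ref{lmain}\eqref{item:1} for the dimension statement). No gaps.
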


{ The next two propositions are classical, and they are
  also consequences of Lemma~\ref{lmain}. We include their proofs for
  lack of a suitable reference. }

\begin{proposition}\label{lmu}
  Let $p\in V$. Then either $n\le \mu_p(V)\le d$ or there is a line
{  containing}  $p$ and  contained in $V$. In particular, every
  hypersurface of degree at most $ n-1$ is ruled.
\end{proposition}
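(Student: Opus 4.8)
The plan is to extract the two bounds on the order of osculation directly from Lemma~\ref{lmain} and Lemma~\ref{lkey}. For the lower bound $n \le \mu_p(V)$, I would use that for a fixed $p$ each $f_{V,j}(p,\bfy)$ is homogeneous of degree $j$ in $\bfy$, so the cone $Z_p^k$ is the common zero locus in $\P^n$ of the $k$ forms $f_{V,1}(p,\bfy),\dots,f_{V,k}(p,\bfy)$. Starting from $Z_p^0=\P^n$ and passing from $Z_p^{k-1}$ to $Z_p^k$ by imposing one additional hypersurface condition, each step drops the dimension by at most one, whence $\dim(Z_p^k)\ge n-k$ for every $k$; nonemptiness is automatic since $p\in Z_p^k$. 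Now suppose $\mu_p(V)$ is finite. By Lemma~\ref{lmain}\eqref{item:2} it equals the least $k$ with $Z_p^k=\{p\}$, and for that $k$ we have $0=\dim(Z_p^k)\ge n-k$, forcing $\mu_p(V)=k\ge n$.

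For the upper bound, I would parametrize a line $L$ through $p$ with $L\not\subset V$ by an affine map $\varphi=(\ell_0,\dots,\ell_n)$ as in Lemma~\ref{lkey}, with each $\ell_i\in K[t]$ of degree at most one. Then $f_V(\ell_0,\dots,\ell_n)\in K[t]$ has degree at most $d$, and it is nonzero precisely because $L\not\subset V$; hence $\ord_p(V,L)=\val_t(f_V(\ell_0,\dots,\ell_n))\le d$. Taking the supremum over all lines through $p$ gives the dichotomy: if some line through $p$ is contained in $V$ we are in the second alternative (and $\mu_p(V)=+\infty$), while otherwise every line through $p$ has order of contact at most $d$, so that $\mu_p(V)\le d$ is finite and the lower bound yields $n\le\mu_p(V)\le d$.

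The ``in particular'' statement then follows at once: if $d\le n-1$ the inequality $n\le\mu_p(V)\le d$ is impossible, so the remaining alternative holds and there is a line through $p$ contained in $V$. Since $p\in V$ was arbitrary, every point of $V$ lies on a line contained in $V$, that is, $V$ is ruled. I do not anticipate a serious obstacle; the only point meriting care is the dimension estimate $\dim(Z_p^k)\ge n-k$, where one must allow that some $f_{V,j}(p,\bfy)$ vanishes identically in $\bfy$ --- but in that case the corresponding section does not decrease the dimension at all, so the inequality is preserved.
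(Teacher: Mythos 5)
Your proof is correct. The lower bound $\mu_p(V)\ge n$ is exactly the paper's own argument: the paper also observes that $Z_p^k$ is cut out by $k$ equations and invokes Krull's Hauptidealsatz, which is precisely your inductive count $\dim(Z_p^k)\ge n-k$ (including your correct remark that an identically vanishing $f_{V,j}(p,\bfy)$ only helps). Where you genuinely diverge is the upper bound. The paper stays inside the cone framework: since $f_{V,j}=0$ for $j>d$, the decreasing sequence $(Z_p^k)_k$ stabilizes at $k=d$; if $Z_p^d=\{p\}$ then $\mu_p(V)\le d$ by Lemma~\ref{lmain}\eqref{item:2}, and otherwise any line in the cone $Z_p^d$ has order of contact greater than $k$ for every $k$, hence by Lemma~\ref{lkey} its parametrized equation has infinite valuation, so the line lies in $V$. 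You instead bound each line individually: if $L\not\subset V$, then $f_V(\ell_0,\dots,\ell_n)$ is a nonzero polynomial of degree at most $d$, so its valuation, which equals $\ord_p(V,L)$ by Lemma~\ref{lkey}, is at most $d$; this is in effect B\'ezout's theorem for a line, the same fact the paper itself invokes in the introduction to handle the case $d=n$. Both routes are sound and of comparable length: yours is more elementary and quantitative, needing neither the stabilization $Z_p^k=Z_p^d$ for $k\ge d$ nor Lemma~\ref{lmain}\eqref{item:1} in the upper-bound step, while the paper's keeps the entire proof phrased in terms of the cones $Z_p^k$, the objects that organize the rest of the section. The deduction of the ``in particular'' statement is identical in both.
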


\begin{proof}
  For $k \in\N$, the subvariety $Z_{p}^{k}$ is defined by $k$
  equations. If this subvariety consists of the single point $p$, then
  this number of equations $k$ has to be at least $n$, by Krull's
  Hauptidealsatz.  Lemma \ref{lmain}\eqref{item:2} then gives the
  lower bound $\mu_p(V)\ge n$.

  On the other hand, if $k>d$ then $Z_{p}^{k}=Z_{p}^{n}$ because
  $f_{V,j}=0$ for all $j>d$. Hence the $Z_{p}^{k}$'s form a sequence
  of subvarieties that is decreasing with respect to the inclusion,
  and constant for $k\ge d$.  By Lemma \ref{lmain}\eqref{item:2}, if
  $Z^{d}_{p}=\{p\}$ then $\mu_p(V)\le d$. Else, by Lemma
  \ref{lmain}\eqref{item:1}, each line contained in $Z^{d}_{p}$ has an
  order of contact that is arbitrarily large. By Lemma \ref{lkey},
  such a line is necessarily contained in $V$.

To conclude, we observe that the last statement is a direct consequence of the first one.
\end{proof}

\begin{proposition}
  \label{prop:5}
  Every singular point of $V$ is a flex point.
\end{proposition}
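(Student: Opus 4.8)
The plan is to invoke the characterization of flex points given in Corollary \ref{cor:regular}: a point $p\in V$ is a flex point if and only if $\dim(Z_p^n)\ge 1$. It therefore suffices to show that the singularity of $V$ at $p$ forces the subvariety $Z_p^n$ to have positive dimension.

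First I would record that, by the explicit expression for the polynomials $f_{V,k}$, one has
$$
f_{V,1}(\bfx,\bfy)=\sum_{i=0}^{n}\frac{\partial f_V}{\partial x_i}(\bfx)\,y_i,
$$
so that, for a fixed $p$, the polynomial $f_{V,1}(p,\bfy)$ is the linear form in $\bfy$ whose coefficients are the partial derivatives of $f_V$ at $p$. As $f_V$ is squarefree, the Jacobian criterion in characteristic zero identifies the singular locus of $V$ with the common zero set of $\partial f_V/\partial x_0,\dots,\partial f_V/\partial x_n$ (the containment in $V$ being automatic by Euler's relation $\sum_i x_i\,\partial f_V/\partial x_i=d\,f_V$ and $d\neq 0$). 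Hence, if $p$ is a singular point of $V$, all these partial derivatives vanish at $p$ and $f_{V,1}(p,\bfy)$ is identically zero as a polynomial in $\bfy$.

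It then remains to carry out a dimension count. Since the equation $f_{V,1}(p,\cdot)=0$ is now trivially satisfied, the subvariety $Z_p^n=\{q\in\P^n\mid f_{V,1}(p,q)=\cdots=f_{V,n}(p,q)=0\}$ is cut out in $\P^n$ by only the $n-1$ equations $f_{V,2}(p,\cdot),\dots,f_{V,n}(p,\cdot)$. This subvariety contains $p$ by Lemma \ref{lmain}, so it is nonempty, and Krull's Hauptidealsatz then gives that each of its irreducible components has dimension at least $n-(n-1)=1$. In particular $\dim(Z_p^n)\ge 1$, so $Z_p^n\ne\{p\}$, and Corollary \ref{cor:regular} yields that $p$ is a flex point.

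This argument is elementary and I expect no genuine obstacle; the only step demanding a little care is the identification of the singular locus of $V$ with the vanishing of the full gradient of $f_V$, which relies on $f_V$ being squarefree together with the characteristic-zero hypothesis. The resulting positive-dimensionality of $Z_p^n$ is then a routine consequence of having one fewer effective equation than the ambient dimension, exactly as in the proof of Proposition \ref{lmu}.
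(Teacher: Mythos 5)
Your proposal is correct and follows essentially the same route as the paper's own proof: identify singularity of $p$ with the vanishing of $f_{V,1}(p,\bfy)$, observe that $Z_p^n$ is then cut out by only $n-1$ equations and contains $p$, apply Krull's Hauptidealsatz to get $\dim(Z_p^n)\ge 1$, and conclude via Corollary \ref{cor:regular}. The only difference is that you spell out the Jacobian-criterion and Euler-relation justifications, which the paper leaves implicit.
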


\begin{proof}
  A point $p\in V$ is singular if and only if
  $f_{V,1}(p,\bfy)=0$. Hence $Z_{p}^{n}$ is defined by $n-1$
  equations. Since this subvariety contains $p$, it is nonempty and
  so, by Krull's Hauptidealsatz, its dimension is at least 1. By
  Corollary \ref{cor:regular}, this point is necessarily a flex.
\end{proof}

For a homogeneous polynomial $g\in K[\bfx]$ of degree $e\ge 1$, we set
\begin{equation} \label{eq:2}
R_{V,g}=\Res_{(1, \dots, n, e)}^{\bfy}(f_{V,1}(\bfx,\bfy),\ldots,f_{V,n}(\bfx,\bfy),g(\bfy)) \in K[\bfx],
\end{equation}
where $\Res_{(1, \dots, n, e)}^{\bfy}$ denotes the resultant of $n+1$
homogeneous polynomials in the variables $\bfy$ of respective degrees
$1, \dots, n, e$.

\begin{proposition}\label{cor:flex}
  Let $g\in K[\bfx]$ be a homogeneous polynomial of degree $e\ge 1$.
  Then $R_{V,g}$ defines the flex locus of $V$ in the open subset $\P^{n}\setminus Z(g)$.
\end{proposition}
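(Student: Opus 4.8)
The plan is to read the vanishing of the resultant $R_{V,g}$ at a point through the fundamental degeneracy criterion for resultants recalled in Section \ref{section2}, and then to match it with the characterization of flex points furnished by Corollary \ref{cor:regular}. Fix a point $p\in \P^{n}$ and consider the specializations $f_{V,1}(p,\bfy),\dots,f_{V,n}(p,\bfy),g(\bfy)$ at $\bfx=p$; these are homogeneous forms in $\bfy$ of respective degrees $1,\dots,n,e$. By definition, $R_{V,g}(p)$ is the value of $\Res_{(1,\dots,n,e)}^{\bfy}$ on these forms, so the vanishing criterion (a system of $n+1$ forms in $n+1$ variables has a common projective zero if and only if its resultant vanishes) yields the key equivalence
\begin{equation*}
R_{V,g}(p)=0 \iff Z_{p}^{n}\cap Z(g)\neq\emptyset,
\end{equation*}
where $Z(g)\subset\P^{n}$ is taken in the $\bfy$-coordinates. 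Thus everything reduces to deciding, for $p\in V\setminus Z(g)$, whether the cone $Z_{p}^{n}$ meets the hypersurface $Z(g)$.

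First I would treat a flex point $p\in V\setminus Z(g)$. By Corollary \ref{cor:regular} this means $\dim Z_{p}^{n}\geq 1$; since a closed subvariety of $\P^{n}$ of positive dimension meets every hypersurface, we get $Z_{p}^{n}\cap Z(g)\neq\emptyset$ and hence $R_{V,g}(p)=0$. Conversely, for a non-flex point $p\in V\setminus Z(g)$, Corollary \ref{cor:regular} gives $Z_{p}^{n}=\{p\}$ (recall that $p\in Z_{p}^{n}$ whenever $p\in V$, as in the proof of Lemma \ref{lmain}). As $p\notin Z(g)$, the intersection $Z_{p}^{n}\cap Z(g)=\{p\}\cap Z(g)$ is empty, whence $R_{V,g}(p)\neq 0$. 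Combining the two cases shows that, inside $V\cap(\P^{n}\setminus Z(g))$, the zero locus of $R_{V,g}$ is exactly the flex locus of $V$, which is the assertion.

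The step I expect to demand the most care is the bookkeeping around the specialization of the resultant, rather than any deep difficulty. One must check that the degeneracy criterion applies verbatim to the specialized forms even when some $f_{V,k}(p,\bfy)$ drops degree or vanishes identically (for instance at a singular point, where $f_{V,1}(p,\bfy)\equiv 0$); this is harmless, since in that case the system trivially has common zeros and both sides of the equivalence hold. The only genuinely geometric input is that a positive-dimensional projective variety cannot avoid a hypersurface, together with the fact, extracted from Lemma \ref{lmain} and Corollary \ref{cor:regular}, that non-flex points are precisely those for which $Z_{p}^{n}$ collapses to the single point $p$. The role of the hypothesis $p\notin Z(g)$ is exactly to separate this residual point $p$ from $Z(g)$ in the non-flex case: a non-flex point lying on $Z(g)$ would satisfy $p\in Z_{p}^{n}\cap Z(g)$ and hence $R_{V,g}(p)=0$. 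This is why the statement is local to $\P^{n}\setminus Z(g)$ and cannot be promoted to all of $\P^{n}$ without modification.
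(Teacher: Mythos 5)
Your proof is correct and takes essentially the same approach as the paper: both reduce the vanishing of $R_{V,g}(p)$ to whether $Z_p^n$ meets $Z(g)$ via the degeneracy criterion for the resultant, and then apply Corollary \ref{cor:regular} to distinguish $Z_p^n=\{p\}$ from $\dim(Z_p^n)\ge 1$, using that a positive-dimensional projective variety meets every hypersurface. The only cosmetic difference is that you state one direction contrapositively (non-flex implies $R_{V,g}(p)\ne 0$) where the paper argues that $R_{V,g}(p)=0$ implies $p$ is a flex.
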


\begin{proof}
  Let $p \in V$ such that $g(p)\ne 0$.  If $R_{V,g}(p)=0$ then $Z(g)$
  intersects $Z_p^n$, by the vanishing property of the
  resultant. Since $p\notin Z(g)$, this implies that $Z_p^n\ne\{p\}$.
  By Corollary \ref{cor:regular}, $p$ is a flex point.
  Conversely, suppose that $p$ is a flex point. Since $g$ is not a
  constant, $Z(g)$ is a hypersurface and, by
  Corollary \ref{cor:regular}, $\dim(Z_p^n)\ge 1$. Hence $Z(g)$ does
  intersect $Z_{p}^{n}$ and so $R_{V,g}(p)=0$, as stated.
\end{proof}

The polynomial $R_{V,g}$ gives an equation for the flex locus of $V$
outside the hypersurface $Z(g)$, but might vanish at points in $Z(g)$
that are not flexes.  The next result, corresponding to Theorem
\ref{thm:2} in the introduction, shows that this equation can be
replaced by another one defining the flex locus of $V$ in the whole of
the projective space.

\begin{theorem}
\label{thm:4}
  There exists a homogeneous polynomial $\rho_{V}\in K[\bfx]$ with
  \begin{equation*}
      \deg(\rho_{V}) =  d\sum_{k=1}^{n}\frac{n!}{k}-(n+1)!
  \end{equation*}
  defining the flex locus of $V$. It is uniquely determined modulo
  $f_{V}$ by the condition that, for any linear form
  $\ell \in K[\bfx]$,
\begin{equation} \label{eq:19}
  R_{V,\ell} \equiv \ell^{n! } \rho_{V} \mod{f_{V}}.
\end{equation}
\end{theorem}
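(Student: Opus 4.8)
The plan is to analyze the polynomial $R_{V,\ell}$ of \eqref{eq:2} directly and to extract from it the factor $\ell^{n!}$ coming from the ``diagonal'' common zero $\bfy=\bfx$ of the system $f_{V,1}(\bfx,\bfy),\dots,f_{V,n}(\bfx,\bfy)$. First I would record the degree bookkeeping, which already forces the shape of the statement. Since the resultant is homogeneous of degree $\prod_{j\ne i}d_{j}$ in the coefficients of its $i$-th argument, and the coefficients of $f_{V,k}(\bfx,\bfy)$ are polynomials in $\bfx$ of degree $d-k$ while those of $\ell(\bfy)$ are scalars, the polynomial $R_{V,\ell}$ is homogeneous in $\bfx$ of degree
\begin{displaymath}
\sum_{k=1}^{n}(d-k)\,\frac{n!}{k}=d\sum_{k=1}^{n}\frac{n!}{k}-n\cdot n!.
\end{displaymath}
This equals $n!+\deg(\rho_{V})$ for $\deg(\rho_{V})=d\sum_{k=1}^{n}\frac{n!}{k}-(n+1)!$, so the asserted degree of $\rho_{V}$ is exactly what the factorization \eqref{eq:19} predicts; it remains to produce the factorization, to check that its cofactor is independent of $\ell$, and to see that it cuts out the flex locus.

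The key geometric input is the multiplicity of the diagonal root. By the computation in the proof of Lemma~\ref{lmain} we have $f_{V,j}(\bfx,\bfx)\equiv 0\pmod{f_{V}}$, so modulo $f_{V}$ the point $\bfy=\bfx$ is a common zero of $f_{V,1}(\bfx,\bfy),\dots,f_{V,n}(\bfx,\bfy)$. Fix an irreducible component of $V$ that is not ruled (the ruled components are treated separately, see below) and let $\bfx$ be its generic point. By Landsberg's theorem such a generic point is not a flex, so Corollary~\ref{cor:regular} gives $Z^{n}_{\bfx}=\{\bfx\}$. Thus the $n$ forms $f_{V,k}(\bfx,\bfy)$, of respective $\bfy$-degrees $1,\dots,n$, meet in the single point $\bfx$; being a zero-dimensional intersection of $n$ hypersurfaces in $\P^{n}$ it is a complete intersection, so by B\'ezout its total multiplicity is $\prod_{k=1}^{n}k=n!$, all concentrated at $\bfx$. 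Hence $\bfx$ is a common zero of multiplicity exactly $n!$.

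With this in hand I would apply the Poisson formula of Proposition~\ref{prop:8}, taking $g_{0}=\ell(\bfy)$, a second linear form $g_{0}'=\ell'(\bfy)$, and $g_{1},\dots,g_{n}=f_{V,1}(\bfx,\bfy),\dots,f_{V,n}(\bfx,\bfy)$. Since for generic $\bfx$ the common zeros of $g_{1},\dots,g_{n}$ reduce to $\bfx$ with multiplicity $n!$, both products in Proposition~\ref{prop:8} collapse to a single factor and the formula yields, as an identity modulo $f_{V}$,
\begin{displaymath}
R_{V,\ell}\,(\ell')^{n!}\equiv R_{V,\ell'}\,\ell^{n!}\pmod{f_{V}}.
\end{displaymath}
This shows that the class of $R_{V,\ell}/\ell^{n!}$ in the reduced ring $K[\bfx]/(f_{V})$ does not depend on $\ell$; calling it $\rho_{V}$ gives \eqref{eq:19}. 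The cofactor $\rho_{V}$ then defines the flex locus: away from $Z(\ell)$ the polynomial $R_{V,\ell}=\ell^{n!}\rho_{V}$ already cuts it out by Proposition~\ref{cor:flex}, and since $\ell$ is invertible there, so does $\rho_{V}$; letting $\ell$ vary covers all of $\P^{n}$. Uniqueness modulo $f_{V}$ is immediate, since for $\ell\nmid f_{V}$ the class of $\ell^{n!}$ is a nonzerodivisor in $K[\bfx]/(f_{V})$. Finally, a ruled component contributes a positive-dimensional $Z^{n}_{\bfx}$ at its generic point, whence $R_{V,\ell}\equiv 0$ and $\rho_{V}\equiv 0$ along it, consistent with such a component lying entirely in the flex locus.

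The step I expect to be the main obstacle is the passage from this generic statement to an honest divisibility in the coordinate ring: that $R_{V,\ell}/\ell^{n!}$ is not merely a rational function on $V$ but the class of a genuine polynomial, equivalently that $\ell^{n!}$ divides $R_{V,\ell}$ in $K[\bfx]/(f_{V})$ with the exact exponent $n!$ and no loss along the boundary $V\cap Z(\ell)$. I would settle this by a valuation computation along the prime divisor $V\cap Z(\ell)$: using the multiplicity-$n!$ concentration of the diagonal root together with the Poisson comparison above, one checks that $R_{V,\ell}$ vanishes to order exactly $n!$ there and that this accounts for all of its vanishing, so the quotient is regular. The remaining bookkeeping---reduction to components, compatibility of the several $\rho_{V}$'s, and the case distinction between $\ell\mid f_{V}$ and $\ell\nmid f_{V}$---is routine once this local statement is secured.
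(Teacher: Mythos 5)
Your first half is essentially the paper's own Lemma \ref{lemm:2}: the Poisson formula (Proposition \ref{prop:8}) combined with B\'ezout's theorem, giving intersection multiplicity $n!$ at the single point $Z_p^n=\{p\}$ when $p$ is not a flex, yields the comparison $R_{V,\ell}\,(\ell')^{n!}\equiv R_{V,\ell'}\,\ell^{n!} \pmod{f_V}$. (Your detour through Landsberg's theorem is unnecessary: the paper argues pointwise, splitting into flex points, where both sides vanish, and non-flex points, where Poisson applies; no input about ruled components is needed.) Up to here the two arguments agree.

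The genuine gap is exactly the step you yourself flag and then defer: converting this congruence of products into the existence of a polynomial $\rho_V$ with $R_{V,\ell}\equiv\ell^{n!}\rho_V \pmod{f_V}$. This \emph{is} the theorem, and your proposed ``valuation computation along the prime divisor $V\cap Z(\ell)$'' is never carried out and is ill-posed as stated: $V\cap Z(\ell)$ is in general neither prime nor irreducible, and since $f_V$ may be reducible, $V$ can be singular in codimension one, so the local rings of $K[\bfx]/(f_V)$ at height-one primes need not be discrete valuation rings --- ``vanishes to order exactly $n!$'' has no meaning there. What is actually needed is the ideal membership $R_{V,\ell}\in\ell^{n!}\,(K[\bfx]/(f_V))_{\mathfrak{p}}$ at each height-one prime $\mathfrak{p}$ containing $\ell$, which follows from your comparison by choosing an auxiliary form $\ell'\notin\mathfrak{p}$ (so $(\ell')^{n!}$ is a unit locally), not from any multiplicity count; and even then, passing from regularity in codimension one to an actual polynomial representative requires the $S_2$ (algebraic Hartogs) property of the ring $K[\bfx]/(f_V)$ --- available because a reduced hypersurface ring is Cohen--Macaulay --- an ingredient you never invoke, and without which ``regular in codimension one'' does not imply ``regular''. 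Note also that exactness of the exponent $n!$ is neither needed nor true in general: only divisibility matters, and $\rho_V$ may perfectly well vanish along $V\cap Z(\ell)$. The paper sidesteps this entire local analysis with an algebraic device you should compare with: letting $\ell_{\bfu},\ell_{\bfv}$ be linear forms with \emph{indeterminate} coefficients, the comparison becomes a syzygy $R_{V,\ell_{\bfu}}\ell_{\bfv}^{n!}-R_{V,\ell_{\bfv}}\ell_{\bfu}^{n!}+s\,f_V=0$ of the regular sequence $\ell_{\bfu}^{n!},\ell_{\bfv}^{n!},f_V$ in $K[\bfu,\bfv,\bfx]$, hence a Koszul syzygy; this hands over $R_{V,\ell_{\bfu}}=\ell_{\bfu}^{n!}\rho_V+f_V\sigma$ with $\rho_V\in K[\bfx]$ by degree count in $\bfu,\bfv$, and specializing $\bfu$ proves \eqref{eq:19} for \emph{every} $\ell$ at once --- including the forms dividing $f_V$, which your localization argument cannot reach directly and which you dismiss as routine (they are recoverable, but only via a separate Zariski-density argument in the coefficients of $\ell$).
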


To prove it, we need the following auxiliary result.

\begin{lemma}
\label{lemm:2}
Let $g,h\in K[\bfx]$ be two homogeneous polynomials of the same
positive degree. Then
\begin{displaymath}
   h^{n!} R_{V,g}  \equiv g^{n!} R_{V,h}  \mod{f_{V}}.
\end{displaymath}
\end{lemma}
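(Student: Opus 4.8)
The plan is to apply the Poisson formula (Proposition \ref{prop:8}) to exploit the fact that $R_{V,g}$ and $R_{V,h}$ share the same first $n$ polynomials $f_{V,1}(\bfx,\bfy),\dots,f_{V,n}(\bfx,\bfy)$ in the last slot. Working at a generic point $\bfx=\bfp$, I would treat $g_1=f_{V,1}(\bfp,\bfy),\dots,g_n=f_{V,n}(\bfp,\bfy)$ as the $n$ fixed homogeneous forms of degrees $1,\dots,n$, and take $g_0=g(\bfy)$, $g_0'=h(\bfy)$ as the two competing forms of equal degree $e$ in the first slot. Proposition \ref{prop:8} then gives
\begin{equation*}
R_{V,g}(\bfp)\prod_{\eta}h(\eta)^{m_\eta}=R_{V,h}(\bfp)\prod_{\eta}g(\eta)^{m_\eta},
\end{equation*}
where the products run over the common zeros $\eta\in\P^n$ of $g_1,\dots,g_n$, counted with multiplicity. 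By B\'ezout, the total number of such zeros (with multiplicity) is $\prod_{k=1}^n k=n!$, so each product is a product of $n!$ linear (for $\ell$) or degree-$e$ factors.

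The key geometric input is to identify these common zeros. For generic $\bfp$ the point $p$ is not a flex, so by Corollary \ref{cor:regular} the variety $Z_p^n$ is just $\{p\}$; thus the only common zero of $f_{V,1}(\bfp,\cdot),\dots,f_{V,n}(\bfp,\cdot)$ in $\P^n$ is $\eta=p$ itself, with multiplicity $n!$. Evaluating the Poisson identity then collapses the products to $h(p)^{n!}$ and $g(p)^{n!}$, giving exactly
\begin{equation*}
R_{V,g}(\bfp)\,h(\bfp)^{n!}=R_{V,h}(\bfp)\,g(\bfp)^{n!}
\end{equation*}
for all $\bfp$ in a dense open subset, namely the complement of $V$ (so that $p\in V$ is excluded and the multiplicity computation is clean) intersected with the non-flex locus. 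Since the desired congruence $h^{n!}R_{V,g}\equiv g^{n!}R_{V,h}\bmod f_V$ asserts that the polynomial $h^{n!}R_{V,g}-g^{n!}R_{V,h}$ lies in the ideal $(f_V)$, and this polynomial vanishes on the dense set $\P^n\setminus(V\cup\{\text{flexes}\})$, it must vanish on $\P^n\setminus V=\P^n\setminus Z(f_V)$, hence be divisible by $f_V$ (as $f_V$ is squarefree, its radical is $(f_V)$ itself).

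The main obstacle I expect is twofold. First, one must rigorously justify that for generic $p\notin V$ the scheme-theoretic intersection $Z_p^n$ is the reduced point $p$ with the full multiplicity $n!$ concentrated there — i.e., that the excess from B\'ezout is entirely local at $p$ and equals $n!$. This requires checking that the forms $f_{V,1}(\bfp,\cdot),\dots,f_{V,n}(\bfp,\cdot)$ (of degrees $1,\dots,n$) meet only at $p$ for generic $p$, and computing the local multiplicity $m_p=n!$ there; one expects this because $f_{V,1}(\bfp,\bfy)$ is a hyperplane through $p$ and successive forms cut transversally in a generic situation, but the multiplicity bookkeeping at the single point must be handled via the product formula $\prod m_\eta=\prod d_i=n!$ so that all of it sits at $p$. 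Second, a minor technical point is handling the valuations/vanishing of $g,h$ at $p$: if $g(p)=0$ or $h(p)=0$ the naive evaluation is degenerate, but since the final congruence is a polynomial identity it suffices to verify it on the open dense locus where $g,h,f_V$ are all nonzero at $p$, and then conclude by Zariski density and squarefreeness of $f_V$.
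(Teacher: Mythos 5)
Your overall strategy---applying the Poisson formula with the shared polynomials $f_{V,1}(\bfp,\bfy),\dots,f_{V,n}(\bfp,\bfy)$ in the fixed slots and $g,h$ competing in the first slot---is the right tool, and it is what the paper uses. But you evaluate at the wrong points, and this breaks the proof. You take $\bfp$ generic in the \emph{complement} of $V$ and claim that, since such $p$ is not a flex, Corollary \ref{cor:regular} gives $Z_p^n=\{p\}$ with all the B\'ezout multiplicity $n!$ concentrated at $p$. However, Corollary \ref{cor:regular} (and Lemma \ref{lmain} behind it) is a statement about points $p\in V$ only: the inclusion $p\in Z_p^k$ is proved there precisely from $f_{V,j}(\bfp,\bfp)=d(d-1)\cdots(d-j)\,f_V(\bfp)=0$, which requires $f_V(\bfp)=0$. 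For $p\notin V$ one has $f_{V,1}(\bfp,\bfp)=d\,f_V(\bfp)\neq 0$ by Euler's identity, so $p$ is not even a common zero of $f_{V,1}(\bfp,\cdot),\dots,f_{V,n}(\bfp,\cdot)$; the common zeros are $n!$ points (with multiplicity) scattered in $\P^n$, bearing no relation to $p$. Consequently the Poisson products do not collapse to $h(\bfp)^{n!}$ and $g(\bfp)^{n!}$, and the identity $h(\bfp)^{n!}R_{V,g}(\bfp)=g(\bfp)^{n!}R_{V,h}(\bfp)$ fails off $V$ in general---as it must, since otherwise $h^{n!}R_{V,g}-g^{n!}R_{V,h}$ would vanish on a dense open set and hence be identically zero, a stronger (and false) statement than the lemma.

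This exposes a second, logical, inversion at the end of your argument: to prove divisibility by $f_V$ you must show the difference vanishes \emph{on} $V=Z(f_V)$ and then invoke the Nullstellensatz together with the squarefreeness of $f_V$; vanishing on $\P^n\setminus V$ would instead force the polynomial to be identically zero. The paper's proof runs along exactly the corrected lines: for $p\in V$ not a flex, Corollary \ref{cor:regular} does apply, $Z_p^n=\{p\}$, B\'ezout concentrates the full multiplicity $n!$ at $p$, and Poisson yields $h(\bfp)^{n!}R_{V,g}(\bfp)=g(\bfp)^{n!}R_{V,h}(\bfp)$; for $p\in V$ a flex, $Z_p^n$ has positive dimension, so the systems defining $R_{V,g}(\bfp)$ and $R_{V,h}(\bfp)$ both have a common zero, both resultants vanish, and the identity holds trivially as $0=0$. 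Hence the difference vanishes on all of $V$ and is divisible by the squarefree $f_V$. If you replace ``generic $p\notin V$'' by ``non-flex $p\in V$'' and add the flex case, your argument becomes the paper's proof.
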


\begin{proof}
  Let $p\in V$ and $\bfp\in K^{n+1}\setminus\{\bfzero\}$ a
  representative of this point. If $p$ is not a flex, then
  $Z_{p}^{n}=\{p\}$ by Corollary \ref{cor:regular}. By B\'ezout's
  theorem, the intersection multiplicity of
  $f_{V,1}(\bfp,\bfy),\dots, f_{V,n}(\bfp,\bfy)$ at $p$ is $n!$ and hence, by the
  Poisson formula (Proposition \ref{eq:13}),
  \begin{equation} \label{eq:4} 
h(\bfp)^{n!}  R_{V,g}(\bfp) = g(\bfp)^{n!} R_{V,h}(\bfp) .
\end{equation}
On the other hand, if $p$ is flex then $Z_{p}^{n}$ has positive
dimension, again by Corollary~\ref{cor:regular}. This implies that the
system $f_{V,1}(\bfp,\bfy),\dots, f_{V,n}(\bfp,\bfy),G(\bfy)$ has a common zero
and so $R_{V,g}(p)=0$ and, similarly $R_{V,h}(p)=0$. Hence
\eqref{eq:4} reduces to $0=0$ in this case.
Thus the equality \eqref{eq:4} holds for every point of $V$, which
implies the statement.
\end{proof}

\begin{proof}[Proof of Theorem \ref{thm:4}]
  Let $\bfu=\{u_{0},\dots, u_{n}\}$ and $\bfv=\{v_{0},\dots, v_{n}\}$ be
  two sets of $n+1$ variables and consider the  linear forms
\begin{displaymath}
\ell_{\bfu}=\sum_{i=0}^{n} u_{i}x_{i} \quad  \text{ and  } \quad 
\ell_{\bfv}=\sum_{i=0}^{n} v_{i}x_{i} .
\end{displaymath}
 By Lemma \ref{lemm:2}, for every choice of $\bfalpha,\bfbeta\in
 K^{n+1}\setminus \{\bfzero\}$,
\begin{equation*}
  R_{V,\ell_{\bfalpha}(\bfx)}   \, \ell_{\bfbeta}(\bfx)^{n! } \equiv
  R_{F,\ell_{\bfbeta}(\bfx)} \, \ell_{\bfalpha}(\bfx)^{n! }  \mod{f_{V}}.
\end{equation*}
We deduce that there is a trihomogeneous polynomial
$s \in K[\bfu,\bfv,\bfx]$ such that
\begin{equation}\label{eq:18}
   R_{V,\ell_{\bfu}}   \ell_{\bfv}^{n! } -  R_{V,\ell_{\bfv}}
   \ell_{\bfu}^{n! } + s  f_{V} =0. 
\end{equation}
The polynomials $\ell_{\bfu}^{n!},\ell_{\bfv}^{n!}, F$ form a regular
sequence in $ K[\bfu,\bfv,\bfx]$, and hence the syzygy \eqref{eq:18}
is necessarily a Koszul syzygy. Hence there are trihomogeneous
polynomials $\rho_{V},\sigma\in K[\bfu,\bfv,\bfx]$ such that
\begin{equation*}
   R_{V,\ell_{\bfu}}= \ell_{\bfu}^{n!} \, \rho_{V} + f_{V} \sigma.
\end{equation*}
Since  $ \deg_{\bfu} (R_{V,\ell_{\bfu}})=n!$ and $\deg_{\bfv}
(R_{V,\ell_{\bfu}})=0$, we deduce that $\rho_{V}\in K[\bfx]$.  The equality
\eqref{eq:19} is obtained by specializing the variables $\bfu$ into
the coefficients of the linear form~$\ell$. 

By this equality \eqref{eq:19} and Proposition \ref{cor:flex},
$\rho_{V}$ defines the flex locus of $V$ in the open subset
$\P^n\setminus Z(\ell)$. Varying $\ell$, we deduce that $\rho_{V}$ defines the
flex locus in the whole of $V$.

The resultant $\Res^{\bfy}_{(1,\dots, n,1)}$ is a multihomogeneous
polynomial and, for $i=0,\dots, n-1$, its degree in the set of
variables $\bfc_{i}$ corresponding to the coefficients of the $i$th
polynomial is $n!/(i+1)$. Hence
\begin{equation*}
  \deg_{\bfx}(R_{V,\ell})=\sum_{i=0}^{n-1}   \deg_{\bfx}(f_{V,i+1})
  \deg_{\bfc_{i}}\big(\Res^{\bfy}_{(1,\dots, n,1)}\big)
  = \sum_{k=1}^{n}(d-k)\frac{n!}{k} = d\sum_{k=1}^{n}\frac{n!}{k}- n\cdot n!
\end{equation*}
Hence $ \deg_{\bfx}(\rho_{V}) = \deg_{\bfx}(R_{V,\ell})-n!=
d\sum_{k=1}^{n}\frac{n!}{k}-(n+1)!$, as stated. 

The uniqueness of the polynomial $\rho_{V}$ satisfying \eqref{eq:19}
follows by considering any linear form $\ell$ that is not a zero
divisor modulo $f_{V}$, completing the proof.
\end{proof}

\begin{definition}
  \label{def:3}
  The \emph{flex scheme} of $V$, denoted by $\Flex(V)$, is the
  subscheme of $\P^{n}$ defined by the homogeneous polynomials $f_{V}$ and $\rho_{V}$.
\end{definition}

This scheme does not depend on the choice of $f_{V}$, unique up to a
nonzero scalar factor, nor on that of $\rho_{V}$, unique modulo $f_{V}$. By
Theorem \ref{thm:4}, its support $|\Flex(V)|$, that is, its set of
closed points, coincides with the flex locus of $V$.

\begin{example}\label{ex:curve}
  Let $C$ be a plane curve of degree $d\ge 2$, and
  $f_{C}\in K[x_{0},x_{1},x_{2}]$ its defining polynomial. A
  computation using the Euler identities shows that, for any linear
  form $\ell$, 
\begin{equation}\label{identiti}
  -(d-1)^{2}\Res^{\bfy}_{(1,2,1)}(f_{C,1}(\bfx,\bfy),f_{C,2}(\bfx,\bfy),\ell (\bfy))\equiv 
  \ell^2 \det(\Hess(f_{C})) \mod{f_{C}},
\end{equation}
where $\Hess(f_{C})$ stands for the Hessian matrix of $f_{C}$.  Thus
we recover from Theorem \ref{thm:4} the well-known fact that a point
$p\in C$ is an inflexion point if and only the determinant of the
Hessian matrix of
$f_{C}$ vanishes at $p$, see for instance \cite[\S 7.3,
Theorem~1]{BK86}.  
\end{example}

Giving a closed form for a canonical representative for $\rho_{V}$
modulo $f_{V}$ seems to be a challenge on its own. In the case of
curves, we have just seen in Example \ref{ex:curve} that such a representative is given by the determinant of the Hessian
matrix of $f_{V}$. For $n=3$, Salmon also
obtained a representative of this polynomial as a determinantal closed
formula in terms of covariants, based on an approach by Clebsch
\cite[Articles 589 to 597]{Salmon:tagtd}. It would be interesting to
generalize these formulae to higher dimensions.

\medskip

For a surface $S$ in $\P^{3}$, Theorem \ref{thm:4} shows that the flex
locus of $S$ is defined by an equation of degree
\begin{displaymath}
  \deg(\rho_{S})= d\sum_{k=1}^{3}\frac{3!}{k}-(3+1)!=11 d-24,
\end{displaymath}
recovering the result of Salmon.

\begin{remark}
  \label{rem:2}
  In the book \cite{Salmon:tagtd}, Salmon studied the flex locus of
  the surface $S$ by means of elimination theory.  His Article 473 in
  pages 94--95 of \emph{loc.~cit.}~gives a general method to compute,
  for three surfaces depending on parameters and satisfying a certain
  intersection theoretic condition, the degree of the condition so
  that these surfaces contain a common line.  His Article 588 in pages
  277--278 of \emph{loc.~cit.} applies this degree computation to the
  three surfaces that arise in the study of the flex locus. In our
  notation, these three surfaces are those defining the variety
  $Z_{p}^{3}$ for a point $p\in S$.
\end{remark}

\section{The flex subscheme of a generic hypersurface} \label{section4}

In this section, we show that for
a generic hypersurface of $\P^{n}$ of degree $d\ge n$, the bounds for
the flex locus in Corollary \ref{cor:1} are sharp, or equivalently that the flex scheme is reduced, and hence that it is equal to the flex locus. The next result
corresponds to Theorem \ref{tmain2}\eqref{item:3} in the introduction.

\begin{theorem}\label{mtt}
  Let $d\ge n $ and $f\in K[\bfx]$ a generic homogeneous polynomial of degree
  $d$. Then $\Flex(Z(f))$ is a reduced subscheme of $Z(f)$ of
  dimension $n-2$.
  In particular
  \begin{enumerate}
  \item \label{item:7} $Z(f)$ has no ruled components;
\item \label{item:8} the flex  locus of $Z(f)$ is  the complete intersection of two hypersurfaces of respective degrees
  $d$ and $d\sum_{k=1}^{n}\frac{n!}{k}-(n+1))!$;
\item \label{item:9}  the degree of the flex  locus of $Z(f)$ is equal to $ d^2\sum_{k=1}^{n}\frac{n!}{k}-d(n+1)!$.
  \end{enumerate}
\end{theorem}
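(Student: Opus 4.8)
The plan is to deduce the whole statement from two facts about a generic $f$ of degree $d\ge n$: first, that the flex locus of $Z(f)$ is a \emph{proper} subvariety of $Z(f)$, of dimension $n-2$; and second, that the flex scheme $\Flex(Z(f))$ is \emph{generically reduced}. Granting these, the argument closes quickly. A generic $f$ defines a smooth (Bertini), hence irreducible, hypersurface $Z(f)$ for $n\ge 2$; properness of the flex locus then means that not all points of $Z(f)$ are flexes, so by Landsberg's theorem \cite[Theorem 3]{Land} it has no ruled component, which is item \eqref{item:7}. Corollary \ref{cor:1} now applies and shows that $\Flex(Z(f))$ is a Cohen--Macaulay complete intersection of pure dimension $n-2$ of scheme-theoretic degree $d^{2}\sum_{k=1}^{n}\frac{n!}{k}-d(n+1)!$. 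Being a complete intersection, it satisfies Serre's condition $S_{1}$, so Serre's criterion makes it reduced as soon as it is generically reduced ($R_{0}$); reducedness then turns the scheme-theoretic assertions into the set-theoretic items \eqref{item:8} and \eqref{item:9}.

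For the first fact I would argue by a pointwise dimension count organised through an incidence variety. Let $\P^{N}=\P(K[\bfx]_{d})$ be the space of hypersurfaces of degree $d$ and let $\cF=\{(p,L): p\in L\}$ be the flag variety of points and lines through them, a smooth irreducible variety of dimension $2n-1$. By Lemma \ref{lkey} and the expansion \eqref{eq:14}, the condition $\ord_{p}(Z(f),L)\ge n+1$ is the vanishing of the coefficients of $t^{0},\dots,t^{n}$ in $f(\bfp+t\bfq)$, which are $n+1$ conditions that are linear in the coefficients of $f$ and independent for the universal $f$. Hence the incidence variety $\cI\subset\P^{N}\times\cF$ of triples $(f,p,L)$ with $\ord_{p}(Z(f),L)\ge n+1$ is a projective subbundle over $\cF$, smooth and irreducible of dimension $N+n-2$. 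Since $\rho_{V}$ has positive degree, the flex locus of a non-ruled $Z(f)$ is a nonempty hypersurface section, so the first projection $\cI\to\P^{N}$ is dominant with generic fibre of dimension $n-2$; upper semicontinuity of fibre dimension then gives, on a dense open subset of $\P^{N}$, a flex locus of dimension exactly $n-2<n-1$.

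The heart of the proof is the second fact. As $Z(f)$ is smooth and $\Flex(Z(f))$ is the complete intersection of $f$ and $\rho_{V}$, generic reducedness is equivalent to the transversality statement that at a general point $p$ of each component of the flex locus the hypersurfaces $Z(f)$ and $Z(\rho_{V})$ meet transversally, i.e.\ that $\rho_{V}|_{Z(f)}$ vanishes to order exactly one there. I would establish this universally. Let $\cZ\subset\P^{N}\times\P^{n}$ be the universal flex scheme, cut out by the universal polynomials $f$ and $\rho_{V}$; by the first fact it has dimension $N+n-2$ and dominates $\P^{N}$. The aim is to show that $\cZ$ is smooth, hence reduced, at a general point $(f_{0},p_{0})$, by checking that the differentials of $f$ and of $\rho_{V}$ with respect to all the variables — the coefficients of $f$ together with the point $p$ — are linearly independent there. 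The freedom to move all coefficients of $f$ makes the differential of $f$ nonzero, and Proposition \ref{prop:9} controls the differential of the resultant defining $\rho_{V}$: at a general flex point the degenerate system $f_{V,1}(\bfp,\bfy),\dots,f_{V,n}(\bfp,\bfy),\ell(\bfy)$ has a nonvanishing partial derivative of its resultant, so it has a single, \emph{simple} common zero, and this simplicity is what forces $\rho_{V}$ to vanish with multiplicity one and the two differentials to be independent. Once $\cZ$ is known to be generically reduced along its component dominating $\P^{N}$, generic smoothness in characteristic zero yields that the general fibre $\cZ_{f}=\Flex(Z(f))$ is smooth, in particular reduced.

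The main obstacle is precisely this transversality/multiplicity-one step: smoothness of the underlying flex locus does \emph{not} by itself force the complete-intersection scheme $Z(f)\cap Z(\rho_{V})$ to be reduced, so one cannot avoid controlling the vanishing order of the resultant-defined polynomial $\rho_{V}$ along $V$. Turning the simple-root criterion of Proposition \ref{prop:9} into a statement about the order of vanishing of $R_{V,\ell}$ as $p$ varies in $V$, and verifying that the resulting Jacobian has maximal rank at a general flex point, is where the genuine work lies. An alternative is to invoke openness of geometric reducedness in the flat family $\{\Flex(Z(f))\}_{f}$ (the members share the Hilbert polynomial of a complete intersection of type $(d,\deg\rho_{V})$) and to exhibit a single reduced example; but producing such an example uniformly in all $n$ and all $d\ge n$ is itself delicate, which is why I would favour the universal Jacobian computation above.
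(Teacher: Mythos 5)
Your overall architecture is coherent: the chain ``proper flex locus $\Rightarrow$ no ruled components (Landsberg) $\Rightarrow$ Corollary \ref{cor:1} $\Rightarrow$ complete intersection, hence $S_1$, so $R_0$ suffices by Serre's criterion'' is valid, and your first fact (properness and dimension $n-2$ of the generic flex locus, via the incidence bundle $\cI$ over the flag variety of pairs $(p,L)$) is essentially proved; it closely parallels the paper's own incidence variety $\Gamma_{n}$ of Lemma \ref{lemm:5}. But the heart of the theorem --- generic reducedness --- is exactly the step you do not carry out, and you concede as much (``is where the genuine work lies''). As written, the appeal to Proposition \ref{prop:9} begs the question: that proposition applies only once one knows that some partial derivative of the resultant is nonzero at the degenerate system attached to a general flex point, and this gradient nonvanishing \emph{is} the transversality/multiplicity-one statement to be proven; nothing in your proposal establishes it. Your fallback (openness of reducedness in a flat family plus a single reduced example) is likewise only named, not executed. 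So there is a genuine gap, and it sits precisely on the main difficulty.

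For comparison, the paper fills this gap by a purely algebraic route that avoids any Jacobian or transversality computation. Working with the universal coefficients $\bfc$, it proves that the ideal $(F,\Phi_{d})\subset K[\bfc,\bfx]$ defining the universal flex scheme is \emph{prime} (Lemmas \ref{lemm:1} and \ref{lemm:3}). The inputs are: (i) $R_{F,y_{0}}$ evaluated at $p=(1,0,\dots,0)$ is, up to a scalar, the resultant of generic polynomials of degrees $1,\dots,n$, hence irreducible (Lemma \ref{lemm:4}); (ii) this forces $R_{F,y_{0}}$ to be irreducible in $K[\bfc,\bfx]_{x_{0}}$ (Lemma \ref{cruc}); (iii) since $R_{F,y_{0}}$ does not involve the variable $c_{d,0,\dots,0}$, localizing at $x_{0}$ gives an isomorphism $K[\bfc,\bfx]_{x_{0}}/(F,\Phi_{d})\simeq K[\bfc',\bfx]_{x_{0}}/(R_{F,y_{0}})$, which is therefore a domain. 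The universal flex scheme is thus an integral variety of dimension $N+n-2$; the theorem on dimension of fibers gives dimension $n-2$ for the generic fiber, and Jouanolou's Bertini theorem in characteristic zero \cite[Th\'eor\`eme 6.3(3)]{Jou83} (reducedness of the generic fiber of a dominant morphism from a reduced scheme) gives reducedness outright. Note that the gradient nonvanishing your plan requires does appear in the paper, but only in Proposition \ref{prop:3} of Section \ref{section6}, for the birationality statement of Theorem \ref{thm:1}, and its proof there again rests on the irreducibility of Lemma \ref{lemm:4}; so if you wish to complete your route through Proposition \ref{prop:9}, that irreducibility result is the missing ingredient you would have to prove first.
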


Let $d\ge n$ and consider the general polynomial of degree $d$ in the
variables $\bfx$
\begin{equation*}
F=\sum_{|\bfa|=d} c_{\bfa}\bfx^{\bfa} ,
\end{equation*}
the sum being over the vectors $\bfa\in \N^{n+1}$ of length $d$. 
Put
$\bfc=\{c_{\alpha}\}_{|\alpha|=d}$ for the set of ${n+d \choose n}$
variables corresponding to the coefficients of $F$. Thus, $F$ is an irreducible polynomial in $K[\bfc,\bfx]$, bihomogeneous of bidegree $(1,d)$. 

The polynomials $F_{k} \in K[\bfc,\bfx,\bfy]$, $k=0,\dots, d$, are
determined by the expansion
\begin{equation}
  \label{eq:24}
  F(\bfx+t\bfy)=\sum_{k=0}^{d}F_{k}(\bfx,\bfy) \frac{t^{k}}{k!}.
\end{equation}
Following \eqref{eq:2}, for a linear form $\ell\in K[\bfx]$ we set
\begin{equation*}
R_{F,\ell} :=R_{Z(F),\ell} =\Res_{1,\dots,n,1}^{\bfy}(F_1(\bfx,\bfy),\ldots,F_n(\bfx,\bfy),\ell(\bfy)).
\end{equation*}
It is a bihomogeneous polynomial in $K[\bfc,\bfx]$ with bidegree given
by
  \begin{equation}
\label{eq:26}
      \deg_{\bfc}(R_{F,\ell}) =  \sum_{k=1}^{n}\frac{n!}{k}\and
      \deg_{\bfx}(R_{F,\ell}) =  d\sum_{k=1}^{n}\frac{n!}{k}- n \cdot n!.
  \end{equation}

We first prove the existence of a universal polynomial  $\Phi_{d}$
in $ K[\bfc,\bfx]$ with the property that, for any hypersurface $V$ of $\P^{n}$ of
degree $d$, its flex polynomial $\rho_{V}$ can be obtained as the
evaluation of $\Phi_{d}$ at the coefficients of a defining
polynomial of $V$.

\begin{proposition}
\label{prop:6}
  There is a bihomogeneous polynomial $\Phi_{d}\in K[\bfc,\bfx]$ with
  \begin{equation}
\label{eq:23}
      \deg_{\bfc}(\Phi_{d}) =  \sum_{k=1}^{n}\frac{n!}{k}\and
      \deg_{\bfx}(\Phi_{d}) =  d\sum_{k=1}^{n}\frac{n!}{k}-(n+1)!
  \end{equation}
such that, for any squarefree homogeneous polynomial $f\in K[\bfx]$ of degree
$d$,
\begin{equation}
\label{eq:22}
  \rho_{Z(f)}(\bfx)=\Phi_{d}(f,\bfx).
\end{equation}
It is uniquely determined modulo $F$ in the ring $K[\bfc,\bfx]$ by the
condition that, for any linear form $\ell \in K[\bfx]$,
  \begin{equation} \label{eq:191} 
R_{F,\ell} \equiv \ell^{n! }    \Phi_{d} \mod{F} .
\end{equation}
\end{proposition}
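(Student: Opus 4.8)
The plan is to carry out the argument of Theorem \ref{thm:4} \emph{universally}, over the coefficient ring $K[\bfc]$ rather than over $K$. Introduce two further sets of variables $\bfu=\{u_0,\dots,u_n\}$ and $\bfv=\{v_0,\dots,v_n\}$ with the generic linear forms $\ell_{\bfu}=\sum_i u_ix_i$ and $\ell_{\bfv}=\sum_i v_ix_i$, and set $P=R_{F,\ell_{\bfu}}\,\ell_{\bfv}^{n!}-R_{F,\ell_{\bfv}}\,\ell_{\bfu}^{n!}\in K[\bfc,\bfu,\bfv,\bfx]$. The three steps are: (i) prove the universal divisibility $F\mid P$; (ii) extract $\Phi_{d}$ from the resulting Koszul syzygy and argue by multidegrees that $\Phi_{d}\in K[\bfc,\bfx]$; (iii) read off the bidegrees \eqref{eq:23} and verify \eqref{eq:22} together with the uniqueness claim. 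I note in passing that merely base-changing to $\ov{\Q}$-style closure $\overline{K(\bfc)}$ and quoting Theorem \ref{thm:4} for $F$ over that field would produce a $\Phi_{d}$ whose coefficients are only algebraic functions of $\bfc$; obtaining a genuine \emph{polynomial} dependence on $\bfc$ is exactly what forces the direct approach below.

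Step (i) is where I expect the main obstacle to lie, since Lemma \ref{lemm:2} is proven pointwise over $K$ for a \emph{fixed} squarefree defining polynomial and must be globalized over the parameter space $\A^{\bfc}$. I would fix $\bfc_0$ for which $f_0=\sum_{\bfa}c_{0,\bfa}\bfx^{\bfa}$ is squarefree, which is a dense open condition. For every pair of linear forms $\ell_{\bfu_0},\ell_{\bfv_0}$, Lemma \ref{lemm:2} applied to $Z(f_0)$ gives $\ell_{\bfv_0}^{n!}R_{Z(f_0),\ell_{\bfu_0}}\equiv \ell_{\bfu_0}^{n!}R_{Z(f_0),\ell_{\bfv_0}}\pmod{f_0}$ in $K[\bfx]$; since the resultant specializes tautologically, $R_{F,\ell}(\bfc_0,\bfx)=R_{Z(f_0),\ell}$, so evaluating at any $\bfx_0\in Z(f_0)$ yields $P(\bfc_0,\bfu_0,\bfv_0,\bfx_0)=0$. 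Letting $\bfu_0,\bfv_0,\bfx_0$ vary, $P$ vanishes at every point of $Z(F)$ whose $\bfc$-coordinate gives a squarefree form. As $F$ is irreducible in $K[\bfc,\bfx]$, the hypersurface $Z(F)$ is irreducible, and these points are dense in it; hence $P$ vanishes on all of $Z(F)$, and because $(F)$ is prime the Nullstellensatz gives $F\mid P$.

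Granting step (i), write the divisibility as $R_{F,\ell_{\bfu}}\,\ell_{\bfv}^{n!}-R_{F,\ell_{\bfv}}\,\ell_{\bfu}^{n!}+sF=0$. I would check that $\ell_{\bfu}^{n!},\ell_{\bfv}^{n!},F$ form a regular sequence in $K[\bfc,\bfu,\bfv,\bfx]$: over a fixed $\bfx\ne\bfzero$ the three equations cut out a hyperplane in each of the disjoint variable groups $\bfu$, $\bfv$, and $\bfc$ (the last because $F$ is linear in $\bfc$), so their common zero set has codimension $3$, and in this Cohen--Macaulay ring this forces a regular sequence. Thus the syzygy is Koszul, which yields $R_{F,\ell_{\bfu}}=\ell_{\bfu}^{n!}\Phi_{d}+F\sigma$. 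Comparing $(\bfu,\bfv)$-multidegrees, the left-hand side has multidegree $(n!,0)$ while $\ell_{\bfu}^{n!}$ has multidegree $(n!,0)$ and $F$ has multidegree $(0,0)$, which forces $\Phi_{d}\in K[\bfc,\bfx]$ and $\deg_{\bfv}\sigma=0$. Specializing $\bfu$ at the coefficients of an arbitrary $\ell$ gives \eqref{eq:191}. The bidegrees \eqref{eq:23} then follow from \eqref{eq:26}, since $\deg_{\bfc}(\ell_{\bfu}^{n!})=0$ and $\deg_{\bfx}(\ell_{\bfu}^{n!})=n!$. Finally, for \eqref{eq:22} I would specialize \eqref{eq:191} at the coefficients of a squarefree $f$: this shows $\Phi_{d}(f,\bfx)$ satisfies the defining congruence \eqref{eq:19} of Theorem \ref{thm:4}, hence represents $\rho_{Z(f)}$ modulo $f$. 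Uniqueness modulo $F$ follows by choosing $\ell$ that is a nonzerodivisor modulo $F$, which exists because $K[\bfc,\bfx]/(F)$ is a domain, so that \eqref{eq:191} determines $\Phi_{d}$ modulo $F$.
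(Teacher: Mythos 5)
Your proposal is correct and follows essentially the same route as the paper, whose entire proof of this proposition is the instruction to ``adapt the proof of Theorem \ref{thm:4}'' to the universal polynomial $F$ over $K[\bfc]$, followed by the same specialization and uniqueness arguments you give. The details you supply are exactly what that adaptation requires --- globalizing Lemma \ref{lemm:2} to the divisibility $F \mid R_{F,\ell_{\bfu}}\ell_{\bfv}^{n!}-R_{F,\ell_{\bfv}}\ell_{\bfu}^{n!}$ via density of squarefree specializations and primeness of $(F)$, then the Koszul-syzygy and multidegree bookkeeping; the only slip is that your codimension count for the regular sequence ignores the locus $\bfx=\bfzero$, where all three polynomials vanish, but that stratum has codimension $n+1\ge 3$ for $n\ge 2$, so the conclusion stands (the paper itself asserts the analogous regular sequence in Theorem \ref{thm:4} with no justification at all).
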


\begin{proof}
  Adapting the proof of Theorem \ref{thm:4} to the present situation,
  we can show the existence of a bihomogeneous polynomial
  $\Phi_{d}\in K[\bfc,\bfx]$ satisfying the congruence \eqref{eq:191}
  for any linear form $\ell \in K[\bfx]$.  The formulae \eqref{eq:23}
  for the degrees of $\Phi_{d}$ in the variables~$\bfc$ and $\bfx$
  follow from this congruence and the corresponding formulae for
  $R_{F,\ell}$ in~\eqref{eq:26}.

For a squarefree homogeneous polynomial $f\in K[\bfx]$ of degree $d$,
the congruence \eqref{eq:191} can be evaluated into the coefficients of $f$,
specializing to
\begin{displaymath}
R_{Z(f),\ell} \equiv \ell^{n! }    \Phi_{d}(f)  \mod{f} .
\end{displaymath}
The equality \eqref{eq:22} then follows from the unicity of
$\rho_{Z(f)}$ modulo $f$.
\end{proof}

\begin{remark}
	By the definition of the resultant, as recalled in Section \ref{section2}, the universal polynomial $\Phi_d$ can be chosen as a primitive polynomial with integer coefficients, that is as an irreducible polynomial in $\Z[\bfc,\bfx]$.
\end{remark}

\begin{lemma}\label{lemm:4}
  The polynomial $R_{F,y_{0}}(1,0,\dots, 0)$ is irreducible in
  $K[\bfc]$.
\end{lemma}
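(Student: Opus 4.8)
The plan is to show that after two reductions --- eliminating $y_0$ by the descent of dimension and then specializing $\bfx$ to the point $\be_0=(1,0,\dots,0)$ --- the polynomial $R_{F,y_0}(1,0,\dots,0)$ becomes, up to a nonzero scalar, the resultant of a \emph{generic} system of $n$ forms in $n$ variables, whose irreducibility is already recorded in Section \ref{section2}. Irreducibility in the subring generated by the relevant coefficients will then lift to $K[\bfc]$.

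First I would invoke the descent of dimension. Since the last polynomial in $R_{F,y_0}$ is the linear form $y_0$, Proposition \ref{prop:2} (applied after relabelling $y_0$ as the variable to be eliminated, and noting that the exponent $d_n=1$ yields no extra power, while any sign is irrelevant for irreducibility) specializes to
\begin{equation*}
R_{F,y_0} = \pm\, \Res^{\bfy}_{(1,2,\dots,n)}\big(\bar F_1, \dots, \bar F_n\big),
\end{equation*}
where $\bar F_k(\bfx,y_1,\dots,y_n)=F_k(\bfx; 0, y_1,\dots,y_n)$ and the right-hand resultant is taken in the $n$ variables $y_1,\dots,y_n$. This is a polynomial identity in $K[\bfc,\bfx]$, so it persists after setting $\bfx=\be_0$; here one only needs the mild check that the $\bfy$-leading coefficients of the $F_k$ do not all vanish at $\be_0$, so that no degree drop occurs and evaluation commutes with taking the resultant.

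The key step is the explicit computation of $\bar F_k(\be_0, y_1,\dots,y_n)$. Writing $F_k=(\bfy\cdot\nabla)^kF=\sum_{|\bfb|=k}\binom{k}{\bfb}\bfy^{\bfb}\,\partial^{\bfb}F$ and using that, at $\bfx=\be_0$, a monomial $\bfx^{\bfa-\bfb}$ survives only when $\bfa=(d-k)\be_0+\bfb$ (whence $\partial^{\bfb}F(\be_0)=\bfb!\,c_{(d-k)\be_0+\bfb}$), I would obtain
\begin{equation*}
\bar F_k(\be_0, y_1,\dots,y_n)= k!\sum_{|\bfb|=k}c_{(d-k)\be_0+\bfb}\, y_1^{b_1}\cdots y_n^{b_n},
\end{equation*}
the sum being over $\bfb=(b_1,\dots,b_n)\in\N^{n}$ with $|\bfb|=k$. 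Thus each $\bar F_k(\be_0,\cdot)$ is, up to the nonzero scalar $k!$, the \emph{general} form of degree $k$ in $y_1,\dots,y_n$, and the coefficient variables occurring for distinct values of $k$ are pairwise disjoint, being distinguished by the value $a_0=d-k$. Hence $(\bar F_1(\be_0,\cdot),\dots,\bar F_n(\be_0,\cdot))$ is the generic system of $n$ forms of degrees $1,\dots,n$ in $n$ variables, and $R_{F,y_0}(1,0,\dots,0)$ equals a nonzero scalar multiple (a product of powers of the $k!$'s, nonzero in characteristic zero) of the generic resultant $\Res_{(1,2,\dots,n)}$. That resultant is irreducible in the polynomial ring generated by its coefficient variables, by Section \ref{section2}; since those variables form a subset of $\bfc$ and a polynomial irreducible in a polynomial subring stays irreducible after adjoining further variables, the claim follows.

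The hard part will be the bookkeeping in the evaluation at $\be_0$: one must pin down exactly which coefficient variables $c_{\bfa}$ occur and verify that they are genuinely independent and non-overlapping across the different $k$, so that the specialized system is truly generic rather than merely a degeneration of one. The appeal to the descent of dimension is then routine, modulo the degree-preservation check under $\bfx=\be_0$.
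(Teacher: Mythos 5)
Your proposal is correct and follows essentially the same route as the paper's own proof: apply the descent-of-dimension formula (Proposition \ref{prop:2}) to eliminate $y_0$, evaluate at $\bfx=(1,0,\dots,0)$, compute that the specialized polynomials are (up to the factors $k!$) generic forms of degrees $1,\dots,n$ in $y_1,\dots,y_n$ with pairwise disjoint coefficient variables, and conclude from the irreducibility of the generic resultant. Your extra remarks — the scalar being a product of powers of factorials, and irreducibility lifting from the subring of relevant coefficients to all of $K[\bfc]$ — are correct refinements of details the paper leaves implicit (the worry about degree drop under specialization is unnecessary, since the resultant identity is a polynomial identity in $K[\bfc,\bfx]$).
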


\begin{proof}
  Set  for short $R=R_{F,y_{0}}$.  By Proposition \ref{prop:2},
\begin{equation}\label{eq:28}
  R=\Res_{1,\dots, n}^{\bfy'}(F_1(\bfx,0,\bfy'),\ldots,F_n(\bfx,0,\bfy'))
\end{equation}
where $\bfy'$ denotes the set of variables $\{y_{1},\dots, y_{n}\}$.
Hence
\begin{equation*}
R (1,0,\dots, 0)=  \Res_{1,\dots, n}^{\bfy'}(F_1((1,0,\dots, 0),
  (0,\bfy')),\ldots,F_n((1,0,\dots, 0), (0,\bfy')))
\end{equation*}
and, for $j=0,\dots, d$, 
 \begin{equation}\label{eq:29}
 F_{j}((1,0,\dots, 0),  (0,\bfy'))=j!  \sum_{\bfa'}c_{d-j,\bfa'}\,
y_1^{a'_1}\dots y_n^{a'_n} \in  K[\bfc,\bfy'],
 \end{equation}
 the sum being over the vectors $\bfa'\in\N^{n}$ of length $j$.  We
 deduce that $R (1,0,\dots, 0)$ coincides, up to a nonzero scalar,
 with the resultant of $n$ generic polynomials in $n$ variables of
 degrees $1, 2, \dots, n$. In particular, it is irreducible.
\end{proof}

\begin{lemma}\label{cruc}
  The polynomial $R_{F,y_{0}}$ does not depend on the variable
  $c_{d,0,\dots, 0}$, and it is irreducible in
  $K[\bfc, \bfx]_{x_{0}}$.
\end{lemma}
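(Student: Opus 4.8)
The first assertion follows at once from the formula \eqref{eq:28}. Setting $y_{0}=0$ in the expansion of $F_{j}$ kills every monomial $y_{i_{1}}\cdots y_{i_{j}}$ with some index equal to $0$, leaving
\[
F_{j}(\bfx,0,\bfy')=\sum_{1\le i_{1},\dots,i_{j}\le n}\frac{\partial^{j}F}{\partial x_{i_{1}}\cdots\partial x_{i_{j}}}(\bfx)\, y_{i_{1}}\cdots y_{i_{j}}.
\]
For $j\ge 1$ each term carries at least one derivation $\partial/\partial x_{i_{k}}$ with $i_{k}\ge 1$, which annihilates the monomial $x_{0}^{d}$. Hence none of $F_{1}(\bfx,0,\bfy'),\dots,F_{n}(\bfx,0,\bfy')$ involves $c_{d,0,\dots,0}$, and neither does $R_{F,y_{0}}$ by \eqref{eq:28}.

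For the irreducibility, the plan is to work in $K[\bfc,\bfx]_{x_{0}}=\bigl(K[\bfx]_{x_{0}}\bigr)[\bfc]$ and invoke Gauss's lemma over the UFD $K[\bfx]_{x_{0}}$, whose fraction field is $K(\bfx)$. It then suffices to establish two facts: \textbf{(a)} $R_{F,y_{0}}$ is \emph{primitive} as a polynomial in $\bfc$, that is, its content lies in $K[\bfx]_{x_{0}}^{\times}$; and \textbf{(b)} $R_{F,y_{0}}$ is irreducible in $K(\bfx)[\bfc]$. Both will be reduced to the irreducibility of the specialization $R_{F,y_{0}}(\bfc,1,0,\dots,0)$ proved in Lemma \ref{lemm:4}. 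I expect \textbf{(a)} to be the main obstacle, since a priori the $\bfx$-content could absorb genuine geometric degeneracies.

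For \textbf{(a)}, recall that an irreducible $q\in K[\bfx]$ divides $R_{F,y_{0}}$ only if $R_{F,y_{0}}(\bfc,\bfx_{0})$ vanishes identically in $\bfc$ for every $\bfx_{0}\in Z(q)$. I would show instead that $R_{F,y_{0}}(\bfc,\bfx_{0})\not\equiv 0$ whenever the first coordinate of $\bfx_{0}$ is nonzero; this forces $Z(q)\subseteq Z(x_{0})$, hence $q=x_{0}$ up to a scalar, so the only irreducible factor of $R_{F,y_{0}}$ in $K[\bfx]$ is $x_{0}$, and its content is $x_{0}^{a}$ up to a scalar, a unit after inverting $x_{0}$. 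The key point is an equivariance: for $\bfx_{0}=(1,a_{1},\dots,a_{n})$ the linear automorphism $\tau(\bfx)=(x_{0},x_{1}+a_{1}x_{0},\dots,x_{n}+a_{n}x_{0})$ fixes the hyperplane $\{x_{0}=0\}$ pointwise and therefore fixes every direction $(0,\bfy')$. Writing $G=F\circ\tau$, whose coefficients are an invertible linear image $\bfc'$ of $\bfc$, one gets $G_{j}((1,0,\dots,0),0,\bfy')=F_{j}(\bfx_{0},0,\bfy')$ and hence $R_{F,y_{0}}(\bfc,\bfx_{0})=R_{F,y_{0}}(\bfc',1,0,\dots,0)$. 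Since the right-hand side is not identically zero by Lemma \ref{lemm:4}, neither is the left-hand side.

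For \textbf{(b)}, suppose $R_{F,y_{0}}=A\,B$ in $K[\bfc,\bfx]_{x_{0}}$ with $A,B$ non-units. By \textbf{(a)} a degree-zero factor would divide the unit content, so both $A,B$ have positive $\bfc$-degree; as $R_{F,y_{0}}$ is homogeneous in $\bfc$, one may take $A,B$ homogeneous in $\bfc$ of degrees $a',b'\ge 1$. Evaluating at $\bfx=(1,0,\dots,0)$ — a legitimate $K$-algebra map on $K[\bfx]_{x_{0}}$ since $x_{0}=1$ there — yields a factorization of $R_{F,y_{0}}(\bfc,1,0,\dots,0)$. Because evaluating the $\bfx$-variables does not alter the $\bfc$-degree, each nonzero specialized factor stays homogeneous of the same positive $\bfc$-degree, hence is non-constant; since the product is nonzero, both factors are nonzero and non-constant, contradicting the irreducibility of $R_{F,y_{0}}(\bfc,1,0,\dots,0)$ from Lemma \ref{lemm:4}. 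Gauss's lemma then yields the irreducibility of $R_{F,y_{0}}$ in $K[\bfc,\bfx]_{x_{0}}$.
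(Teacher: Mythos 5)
Your proof is correct, and its second half takes a genuinely different route from the paper's at the decisive step. The two arguments share the same backbone: the first assertion follows from \eqref{eq:28} exactly as you say, and for irreducibility both you and the paper consider a factorization of $R_{F,y_{0}}$ into $\bfc$-homogeneous factors, specialize at $\bfx=(1,0,\dots,0)$, and use Lemma \ref{lemm:4} to conclude that at most one factor can have positive $\bfc$-degree. The real difficulty is then to show that a factor of $\bfc$-degree zero, i.e.\ a factor lying in $K[\bfx]_{x_{0}}$, must be a unit; equivalently, that $R_{F,y_{0}}(\bfc,\bfx_{0})\not\equiv 0$ in $K[\bfc]$ for every $\bfx_{0}$ with nonzero $x_{0}$-coordinate (which you may normalize to $1$ by bihomogeneity). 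The paper proves this geometrically: for each such point $p$ it constructs a squarefree degree-$d$ polynomial $f$ with $p\in Z(f)$ not a flex point, whence $R_{F,y_{0}}(f,p)\neq 0$ by Proposition \ref{cor:flex}. You prove it algebraically, by conjugating with the translation $\tau$ that fixes $Z(x_{0})$ pointwise: the identity $G_{j}(\bfx,\bfy)=F_{j}(\tau\bfx,\tau\bfy)$ for $G=F\circ\tau$, together with $\tau(0,\bfy')=(0,\bfy')$, correctly gives $R_{F,y_{0}}(\bfc,\bfx_{0})=R_{F,y_{0}}(\bfc',(1,0,\dots,0))$ for an invertible linear substitution $\bfc\mapsto\bfc'$, so nonvanishing follows from Lemma \ref{lemm:4} once more. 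Your route reduces everything to the single algebraic input of Lemma \ref{lemm:4} and dispenses with the existence claim that the paper leaves implicit (``we can construct\dots''); the paper's route is shorter at that point and keeps the geometric meaning of the nonvanishing visible. One cosmetic remark: your closing appeal to Gauss's lemma is redundant, since your step (a) rules out non-unit factors of $\bfc$-degree zero and your step (b) --- which, as executed, is an argument about factorizations in $K[\bfc,\bfx]_{x_{0}}$ rather than in $K(\bfx)[\bfc]$ --- rules out factorizations where both factors have positive $\bfc$-degree; these two cases already exhaust all possibilities.
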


\begin{proof}
The first statement follows from the formula \eqref{eq:28} and the
fact that the polynomials $F_k(\bfx,0,\bfy')$, $k=1,\dots, n$, do not depend on
$c_{d,0,\dots, 0}$ and so neither does $R$, which gives the first
statement.

To prove the second one, set again $R=R_{F,y_{0}}$ and consider a
factorization
\begin{equation*} 
  R =Q_{1}Q_{2}
\end{equation*}
with $Q_{1},Q_{2}\in K[\bfc,\bfx]_{x_{0}}$.  Since $R$ is a
bihomogeneous polynomial in $K[\bfc,\bfx]$, we can assume that its
factors are also of this kind. By Lemma \ref{lemm:4},
$R (1,0,\dots, 0)$ is an irreducible polynomial in $K[\bfc]$ and so
one of these factors, say $Q_{1}$, has degree 0 in the variables
$\bfc$ or equivalently, does not depend on the coefficients of $F$.

For each choice of $p\in \P^{n}\setminus Z(x_{0})$, we can construct a
squarefree homogeneous polynomial $f$ of degree $d$ such that $p$ is
not a flex point of $Z(f)$, and a linear form $\ell$ such that
$\ell(p)\ne 0$.  Proposition \ref{cor:flex} then implies that
$R(p)\ne 0$ and, \emph{a fortiori}, $Q_{1}(p)\ne 0$. Hence $Q_{1}$ is
a unit of $ K[\bfc,\bfx]_{x_{0}}$ and $R$ is irreducible, concluding
the proof.
\end{proof}

\begin{lemma} \label{lemm:1} 
The ideal
  $(F,\Phi_{d})\subset K[\bfc,\bfx]$ is of height 2, and $x_{0}$ is
  not a zero divisor modulo this ideal. 
\end{lemma}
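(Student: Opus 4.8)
The plan is to read everything off the congruence \eqref{eq:191} at $\ell=x_{0}$, that is $R_{F,y_{0}}\equiv x_{0}^{n!}\Phi_{d}\pmod{F}$, combined with the irreducibility of $R_{F,y_{0}}$ provided by Lemma~\ref{cruc}. For the height statement, since $F$ is irreducible the ideal $(F)$ is prime of height $1$, and by Krull's Hauptidealsatz it suffices to prove $\Phi_{d}\notin(F)$. First I would check that $F\nmid R_{F,y_{0}}$ in $K[\bfc,\bfx]$: by Lemma~\ref{cruc} the polynomial $R_{F,y_{0}}$ is irreducible in the localization $K[\bfc,\bfx]_{x_{0}}$, in which $F$ is a non-unit since it is not a power of $x_{0}$; a divisibility $F\mid R_{F,y_{0}}$ would then make $R_{F,y_{0}}$ associate to $F$ there, hence of the form $\lambda x_{0}^{m}F$ with $\lambda\in K^{*}$ and $m\in\Z$, which is impossible because $R_{F,y_{0}}$ is independent of $c_{d,0,\dots,0}$ whereas $\lambda x_{0}^{m}F$ is not (Lemma~\ref{cruc} again). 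Hence $R_{F,y_{0}}\not\equiv 0\pmod{F}$, so $x_{0}^{n!}\Phi_{d}\not\equiv 0\pmod F$; as $(F)$ is prime and $x_{0}\notin(F)$, this gives $\Phi_{d}\notin(F)$, and therefore the ideal $(F,\Phi_{d})$ has height $2$.

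Since $K[\bfc,\bfx]$ is Cohen--Macaulay and $(F,\Phi_{d})$ is generated by two elements and has height $2$, the pair $F,\Phi_{d}$ is a regular sequence; thus $(F,\Phi_{d})$ is unmixed and all its associated primes are minimal of height $2$. Consequently $x_{0}$ is a zero divisor modulo $(F,\Phi_{d})$ if and only if $x_{0}$ belongs to one such prime $\mathfrak p$. Reducing modulo $x_{0}$, a prime $\mathfrak p\ni x_{0}$ of height $2$ descends to a height-$1$ prime of the UFD $K[\bfc,x_{1},\dots,x_{n}]$ containing the reductions $\ov F$ and $\ov{\Phi_{d}}$ of $F$ and $\Phi_{d}$ modulo $x_{0}$; as height-$1$ primes there are principal and $\ov F$ is irreducible (it is the general form of degree $d$ in the $n\ge 2$ variables $x_{1},\dots,x_{n}$), this would force $\ov F\mid\ov{\Phi_{d}}$. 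The whole statement thus reduces to the non-divisibility $\ov F\nmid\ov{\Phi_{d}}$.

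To establish $\ov F\nmid\ov{\Phi_{d}}$, I would produce a single squarefree $f$ for which the divisibility fails after specializing $\bfc$ to the coefficients of $f$. Fix $p=(0:\dots:0:1)$, so that $x_{0}(p)=0$, and seek a squarefree homogeneous $f$ of degree $d$ with $f(p)=0$ and $p$ \emph{not} a flex of $Z(f)$. Working in the chart $x_{n}=1$ with affine coordinates $u_{i}=x_{i}/x_{n}$, write the dehomogenization of $f$ as $g=\sum_{k\ge 0}g_{k}$ with $g_{k}$ homogeneous of degree $k$ in $u_{0},\dots,u_{n-1}$; by Lemma~\ref{lkey} the order of contact at $p$ along the affine direction $v$ is the least $k$ with $g_{k}(v)\ne 0$, so by Lemma~\ref{lmain} and Corollary~\ref{cor:regular} the point $p$ is not a flex exactly when $g_{1},\dots,g_{n}$ have no common zero in $\P^{n-1}$. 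Because $d\ge n$ there is room to prescribe $g_{0}=0$ (ensuring $f(p)=0$) and, say, $g_{k}=u_{k-1}^{k}$ for $k=1,\dots,n$ (which have no common projective zero), completing $g$ with higher-order terms so that $f$ has degree $d$ and is squarefree. For this $f$ one has $\ov{\Phi_{d}}(f,p)=\Phi_{d}(f,p)=\rho_{Z(f)}(p)\ne 0$ by \eqref{eq:22} and Theorem~\ref{thm:4}, whereas $\ov F(f,p)=f(p)=0$; if $\ov F\mid\ov{\Phi_{d}}$, evaluating the resulting factorization at the coefficients of $f$ and then at $p$ would give $\ov{\Phi_{d}}(f,p)=0$, a contradiction. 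The hard part is exactly this construction of an explicit non-flex point lying on the hyperplane $\{x_{0}=0\}$---equivalently, the fact that the flex locus of a suitable hypersurface does not contain that hyperplane section---while the surrounding argument is formal commutative algebra.
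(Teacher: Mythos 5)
Your proof is correct, and its overall skeleton coincides with the paper's: first show that $F$ and $\Phi_{d}$ are coprime (giving height $2$), then use the Cohen--Macaulay/unmixedness formalism to reduce the zero-divisor claim to the coprimality of the reductions $\ov{F}=F(0,x_{1},\dots,x_{n})$ and $\ov{\Phi_{d}}=\Phi_{d}(0,x_{1},\dots,x_{n})$. The genuine divergence is in how that last coprimality is established. The paper stays entirely inside resultant theory: it applies the congruence \eqref{eq:191} a second time, with the linear form $x_{n}$ in place of $x_{0}$, to get $R_{F,y_{n}}(0,x_{1},\dots,x_{n})\equiv x_{n}^{n!}\,\ov{\Phi_{d}} \pmod{\ov{F}}$, and then repeats the argument of Lemma \ref{cruc}: $R_{F,y_{n}}$ does not involve the coefficient $c_{0,\dots,0,d}$ of $x_{n}^{d}$, while the irreducible polynomial $\ov{F}$ does, so they are coprime and hence so are $\ov{F}$ and $\ov{\Phi_{d}}$. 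You instead argue geometrically: you exhibit one squarefree $f$ of degree $d$ with a point $p\in Z(f)\cap Z(x_{0})$ that is \emph{not} a flex of $Z(f)$ (the jet conditions $g_{k}=u_{k-1}^{k}$ indeed have no common zero in $\P^{n-1}$), and then the specialization property \eqref{eq:22} together with the fact that $\rho_{Z(f)}$ cuts out exactly the flex locus inside $Z(f)$ (Theorem \ref{thm:4}) contradicts $\ov{F}\mid\ov{\Phi_{d}}$. Both routes are valid; the paper's is more uniform (the same algebraic trick used twice, no auxiliary construction), while yours makes the geometric content of the lemma explicit --- the flex polynomial cannot vanish identically on the hyperplane section $Z(f)\cap Z(x_{0})$ for every $f$ --- at the modest cost of an explicit example. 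The one loose end on your side is the assertion that the prescribed jet can be completed to a \emph{squarefree} $f$ of degree $d$: this is true but deserves a line of verification. For instance, for $d>n$ take $g=u_{0}+u_{1}^{2}+\cdots+u_{n-1}^{n}+u_{0}^{d}$; its degree-$d$ homogenization $x_{0}x_{n}^{d-1}+x_{1}^{2}x_{n}^{d-2}+\cdots+x_{n-1}^{n}x_{n}^{d-n}+x_{0}^{d}$ has $x_{0}$-partial $x_{n}^{d-1}+dx_{0}^{d-1}$, and a short check of all the partials shows its singular locus has codimension at least $2$, so it is squarefree (the case $d=n$ is the same computation without the last term).
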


\begin{proof}
  Set again $R=R_{F,y_{0}}$ for short.  By Lemma \ref{cruc}, this
  polynomial does not depend on the variable $c_{d,0,\dots,0}$. Hence,
  it is coprime with $F$, as $F$ is irreducible.  By Proposition \ref{prop:6},
  $ R\equiv x_{0}^{n! } \Phi_{d} \mod{F}$, and so $\Phi_{d}$ is also
  coprime with $F$, giving the first statement.

  For the second statement, set $ F'= F(0,x_{1},\dots, x_{n})$ and
  $\Phi_{d}'=\Phi_{d}(0,x_{1},\dots, x_{n})$, so that
\begin{displaymath}
  F\equiv F' \quad \text{ and } \quad \Phi_{d}\equiv \Phi_{d}' \mod{x_{0}}.
\end{displaymath}
Again by Proposition \ref{prop:6},
\begin{displaymath}
R_{F,y_{n}}(0,x_{1},\dots, x_{n})\equiv x_{n}^{n! } \Phi_{d}' \mod{F'}.  
\end{displaymath}
With the same arguments as for the previous case, we deduce that $F'$
and $\Phi_{d}'$ are coprime. Hence $x_{0},F,P$ is a regular sequence
in $ K[\bfc,\bfx]$.

Since $F,\Phi_{d}$ is a regular sequence in $ K[\bfc,\bfx]$ and this
ring is Cohen-Macaulay, the associated primes of the ideal
$(F,\Phi_{d})$ are of height $2$. Since $x_{0},F,\Phi_{d}$ is also a
regular sequence, $x_{0}$ does not lie in any of these associated
primes and so this variable is not a zero divisor modulo
$(F,\Phi_{d})$, as stated.
\end{proof}

\begin{lemma}
  \label{lemm:3}
  The ideal $(F,\Phi_d)\subset K[\bfc,\bfx]$ is prime.
\end{lemma}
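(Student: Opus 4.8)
The goal is to show that the ideal $(F,\Phi_d)$ is prime. My plan is to show that the quotient ring $K[\bfc,\bfx]/(F,\Phi_d)$ is a domain by exhibiting it as a subring of (or as having fraction field inside) a domain, using the localization at $x_0$ where we have good control thanks to Lemmas \ref{cruc} and \ref{lemm:1}. The key structural fact already at hand is the congruence $R_{F,y_0}\equiv x_0^{n!}\Phi_d \bmod F$ from Proposition \ref{prop:6}, which says that modulo $F$, the polynomials $\Phi_d$ and $R_{F,y_0}$ differ only by the invertible factor $x_0^{n!}$ in the localization $K[\bfc,\bfx]_{x_0}$. Since $R:=R_{F,y_0}$ is irreducible in $K[\bfc,\bfx]_{x_0}$ by Lemma \ref{cruc}, this reduces the problem to understanding the ideal $(F,R)$ in the localized ring.

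First I would pass to the localization $K[\bfc,\bfx]_{x_0}$. By Lemma \ref{lemm:1}, $x_0$ is not a zero divisor modulo $(F,\Phi_d)$, so the ideal $(F,\Phi_d)$ is the contraction of its extension $(F,\Phi_d)_{x_0}$, and hence it suffices to prove that the localized ideal is prime; a prime ideal not containing $x_0$ contracts to a prime ideal. In the localized ring, the congruence $R\equiv x_0^{n!}\Phi_d \bmod F$ together with the invertibility of $x_0$ gives $(F,\Phi_d)_{x_0}=(F,R)_{x_0}$. So the task becomes to show that $(F,R)$ is prime in $K[\bfc,\bfx]_{x_0}$.

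Next I would analyze $(F,R)$ directly. The polynomial $R=R_{F,y_0}$ is irreducible in $K[\bfc,\bfx]_{x_0}$ by Lemma \ref{cruc}, so the quotient $K[\bfc,\bfx]_{x_0}/(R)$ is a domain. It remains to check that $F$ is irreducible — equivalently, nonzero and prime — in this quotient. Concretely I want to show that the image of $F$ in the domain $K[\bfc,\bfx]_{x_0}/(R)$ generates a prime ideal. The decisive structural input is that $R$ does \emph{not} depend on the variable $c_{d,0,\dots,0}$ (first statement of Lemma \ref{cruc}), whereas $F=\sum_{|\bfa|=d}c_{\bfa}\bfx^{\bfa}$ depends on $c_{d,0,\dots,0}$ linearly, with coefficient $x_0^d$, which is a unit in $K[\bfc,\bfx]_{x_0}$. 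Thus in the localized quotient ring one can solve the equation $F=0$ for $c_{d,0,\dots,0}$, expressing it as a polynomial in the remaining variables. This means $K[\bfc,\bfx]_{x_0}/(F,R)$ is isomorphic to $\big(K[\bfc',\bfx]_{x_0}/(R)\big)$, where $\bfc'$ is the set of coefficient variables with $c_{d,0,\dots,0}$ removed — here I use that $R$ lives in $K[\bfc',\bfx]_{x_0}$ since it is independent of $c_{d,0,\dots,0}$. Since $R$ remains irreducible in this smaller polynomial ring, the quotient is a domain, and therefore $(F,\Phi_d)_{x_0}=(F,R)_{x_0}$ is prime. Contracting back along the localization map yields that $(F,\Phi_d)$ is prime.

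The main obstacle I anticipate is the bookkeeping in the elimination-of-a-variable step: one must verify cleanly that $F$ is genuinely linear in $c_{d,0,\dots,0}$ with unit leading coefficient $x_0^d$ (immediate from the monomial $\bfx^{(d,0,\dots,0)}=x_0^d$), and that $R$ is truly independent of that variable (exactly the first assertion of Lemma \ref{cruc}), so that the substitution $c_{d,0,\dots,0}\mapsto -x_0^{-d}\big(F-c_{d,0,\dots,0}x_0^d\big)$ induces a ring isomorphism $K[\bfc,\bfx]_{x_0}/(F)\xrightarrow{\sim}K[\bfc',\bfx]_{x_0}$ carrying $R$ to itself and hence $(F,R)$ to $(R)$. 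Once this isomorphism is in place, primeness follows formally from the irreducibility of $R$ established in Lemma \ref{cruc}. The only subtlety is making sure the localization is at $x_0$ throughout, which is harmless precisely because Lemma \ref{lemm:1} guarantees $x_0$ is a nonzerodivisor modulo $(F,\Phi_d)$, so no primary component is lost in the contraction.
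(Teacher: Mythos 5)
Your proof is correct and follows essentially the same route as the paper's: reduce to the localization $K[\bfc,\bfx]_{x_{0}}$ using that $x_{0}$ is a nonzerodivisor modulo $(F,\Phi_{d})$ (Lemma \ref{lemm:1}), replace $\Phi_{d}$ by $R_{F,y_{0}}$ via the congruence \eqref{eq:191} with $\ell=x_{0}$, and then eliminate $c_{d,0,\dots,0}$ --- which appears linearly in $F$ with unit coefficient $x_{0}^{d}$ and not at all in $R_{F,y_{0}}$ by Lemma \ref{cruc} --- to identify the quotient with $K[\bfc',\bfx]_{x_{0}}/(R_{F,y_{0}})$, a domain by irreducibility. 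The only cosmetic difference is your intermediate remark about checking that $F$ is ``irreducible'' in $K[\bfc,\bfx]_{x_{0}}/(R_{F,y_{0}})$, which your actual argument (like the paper's) bypasses by modding out $F$ first.
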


\begin{proof}
  By Lemma \ref{lemm:1}, $x_{0} $ is not a zero divisor modulo
  $(F,\Phi_{d})$ and so the morphism
  \begin{displaymath}
    K[\bfc,\bfx]/(F,\Phi_{d}) \longrightarrow
  K[\bfc,\bfx]_{x_{0}}/(F,\Phi_{d})
  \end{displaymath}
  is an inclusion. Hence, it is enough to prove that the ideal
  $(F,\Phi_{d}) \subset K[\bfc,\bfx]_{x_{0}}$ is prime. Thanks to (\ref{eq:191})  applied with $\ell=x_0$, we obtain an isomorphism
  \begin{displaymath}
    K[\bfc,\bfx]_{x_0}/(F,\Phi_{d}) \longrightarrow
  K[\bfc,\bfx]_{x_{0}}/(F,R_{F,y_0})
  \end{displaymath}
  and we are reduced to show that $(F,R_{F,y_0}) \subset K[\bfc,\bfx]_{x_{0}}$ is prime. 

  Set $\bfc'=\bfc\setminus \{c_{d,0,\dots, 0}\}$ and write
  $F=c_{d,0,\dots, 0} \, x_{0}^{d} +\wt F$ with
  $\wt F\in K[\bfc',\bfx]$. As $R_{F,y_0}$ does not depend on $c_{d,0,\dots, 0}$ by Lemma \ref{cruc}, we get a well-defined isomorphism
\begin{displaymath}
   K[\bfc',\bfx]_{x_{0}}/(R_{F,y_0}) \longrightarrow  K[\bfc,\bfx]_{x_{0}}/(F,R_{F,y_0}).
\end{displaymath}
By Lemma \ref{cruc} again, $R_{F,y_0}$ is irreducible in
$ K[\bfc',\bfx]_{x_{0}}$, and the statement follows. 
\end{proof}

\begin{proof}[Proof of Theorem \ref{mtt}]
  Setting $N={d+n\choose n}-1$, let $Y$ be the subscheme of
  $\P^N\times\P^n$ defined by $F$ and $\Phi_{d}$.  By Lemmas
  \ref{lemm:1} and \ref{lemm:3}, this is an irreducible variety of
  dimension $N+n-2$. Let 
  \begin{displaymath}
  \pi\colon Y\longrightarrow \P^{N}  
  \end{displaymath}
  the map induced by the projection onto the second factor.

  For a generic choice of $\bfalpha\in \P^{N}$, the homogeneous
  polynomial $F(\bfalpha,\bfx)\in K[\bfx]$ is squarefree and, by
  Proposition~\ref{prop:6} and Theorem~\ref{thm:4}, its fiber
  $\pi^{-1}(\bfalpha)$ identifies with the flex locus of the
  hypersurface of $\P^{n}$ defined by this polynomial. The same result
  implies that the dimension of this flex locus is either $n-1 $ or
  $n-2$. Since $Y$ has dimension $N+n-2$, the theorem of dimension of
  fibers implies that $\pi^{-1}(\bfalpha)$ has dimension $n-2$.

Finally, the fact that $Y$ is a variety and Bertini's theorem
\cite[Th\'eor\`eme 6.3(3)]{Jou83} imply that this fiber is reduced,
completing the proof.
\end{proof}

\section{Generic flex points} \label{section6}

For a squarefree homogeneous polynomial $f\in K[\bfx]$
of degree $d\ge n$ and a flex point $p$ of the hypersurface $Z(f)$, we
consider the following properties: 
\begin{enumerate}
\item \label{item:5}  there is a unique
  flex line of $Z(f)$ at $p$;
\item \label{item:6} for a flex line $L$ of $Z(f)$ at $p$, if $d=n$,
  then $L$ is contained in $Z(f)$ whereas if $d>n$, then the order of
  contact of $L$ with $Z(f)$ at $p$ is equal to $n+1$.
\end{enumerate}
In this section we prove the next result, corresponding to Theorem
\ref{tmain2}\eqref{item:4} stated in the introduction. 

\begin{theorem}
  \label{thm:1}
  Let $f\in K[\bfx]$ be a generic homogeneous polynomial of degree
  $d\ge n$, and $p$ a generic point of $\Flex(Z(f))$. Then $(f,p)$
  satisfies the conditions \eqref{item:5} and \eqref{item:6}.
\end{theorem}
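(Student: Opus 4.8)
The plan is to prove both assertions at the generic point of the irreducible variety $Y\subset \P^{N}\times \P^{n}$ defined by $F$ and $\Phi_{d}$ (with $N={d+n\choose n}-1$), which by Theorem \ref{mtt} has dimension $N+n-2$ and whose fibre over a generic $\bfalpha\in\P^{N}$ is the flex locus of $Z(F(\bfalpha,\cdot))$. Since conditions \eqref{item:5} and \eqref{item:6} are constructible, it suffices to check that each holds on a dense open subset of $Y$; as $Y$ is irreducible, any nonempty open condition is automatically dense there. I first dispose of \eqref{item:6} when $d=n$: a flex line then has order of contact $\ge n+1>d$ at $p$, so by B\'ezout it is contained in $Z(f)$, as already observed before Corollary \ref{cor:2}. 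It thus remains to treat the uniqueness \eqref{item:5} for all $d\ge n$, and the sharp contact $n+1$ in \eqref{item:6} when $d>n$.

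For \eqref{item:5} I would use the resultant criterion of Proposition \ref{prop:9}. On the irreducible variety $Y\times\P^{n\vee}$, where $\P^{n\vee}$ parametrizes linear forms $\ell$, the resultant $R_{F,\ell}(\bfalpha,p)$ vanishes identically, since $p$ is a flex point (Proposition \ref{cor:flex}, or directly from the congruence \eqref{eq:191}). Consider the locus $U$ where some partial derivative $\partial\Res_{(1,\dots,n,1)}/\partial c_{i,\bfa}$, evaluated at the system $F_{1}(\bfalpha,p,\bfy),\dots,F_{n}(\bfalpha,p,\bfy),\ell(\bfy)$, is nonzero; this is an open subset of $Y\times\P^{n\vee}$. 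On $U$, Proposition \ref{prop:9} guarantees that this system has a single common zero $q\in\P^{n}$, given moreover by an explicit rational formula in $(\bfalpha,p,\ell)$. Thus $Z_{p}^{n}\cap Z(\ell)=\{q\}$ is one point; for $\ell$ generic this forces $\dim Z_{p}^{n}=1$, and then that the one–dimensional cone $Z_{p}^{n}$ is a single line through $p$, so by Lemma \ref{lmain} the flex line at $p$ is unique. Once $U\neq\emptyset$ is known it is dense, so this conclusion holds for generic $(\bfalpha,p)\in Y$ and generic $\ell$, which is exactly \eqref{item:5}.

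Everything therefore reduces to exhibiting one point of $U$, that is, a single squarefree hypersurface with a flex point whose osculating cone $Z_{p}^{n}$ is a reduced line met transversally by $Z(\ell)$; this is the step I expect to be the main obstacle. I would build it by prescribing the Taylor expansion at $p_{0}=[1:0:\cdots:0]$. Writing $\wt f=\wt f^{[1]}+\wt f^{[2]}+\cdots$ for the dehomogenization of $f_{0}$ at $p_{0}$, one has $Z_{p_{0}}^{n}\cap\{y_{0}=0\}=\{\wt f^{[1]}=\cdots=\wt f^{[n]}=0\}\subset\P^{n-1}$. Choosing $\wt f^{[1]}$ linear and $\wt f^{[2]},\dots,\wt f^{[n]}$ generic of degrees $2,\dots,n$ all vanishing at a fixed point $P_{1}\in\P^{n-1}$, with $\wt f^{[n]}$ arranged to avoid the remaining finitely many common zeros of $\wt f^{[1]},\dots,\wt f^{[n-1]}$, makes this common zero set the single reduced point $P_{1}$; hence $Z_{p_{0}}^{n}$ is a single reduced line $L_{0}$. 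For a generic transverse $\ell$ the system then has a unique common zero $q_{0}=L_{0}\cap Z(\ell)$ which is a simple one, the $n+1$ differentials having rank $n$ at $q_{0}$; such a system is a smooth point of the irreducible resultant hypersurface, so $\nabla\Res\neq 0$ there (the converse direction of Proposition \ref{prop:9}), placing the example in $U$. The same hypersurface serves \eqref{item:6}: prescribing in addition a degree $n+1$ Taylor part with $\wt f^{[n+1]}(P_{1})\neq 0$, possible exactly when $d>n$, makes the order of contact of $L_{0}$ at $p_{0}$ equal to $n+1$, whereas for $d=n$ the line $L_{0}$ lies in $Z(f_{0})$.

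Finally, for the sharp contact in \eqref{item:6} when $d>n$, I would work on the incidence variety $\wt Y\subset\P^{N}\times\P^{n}\times\P^{n}$ cut out by $F$ and $F_{1},\dots,F_{n}$, restricted to the open locus $\wt Y^{\circ}$ where $q\neq p$; its points are triples $(\bfalpha,p,q)$ with $\langle p,q\rangle$ a flex line of $Z(F(\bfalpha,\cdot))$ at $p$. The $n+1$ conditions $F=F_{1}=\cdots=F_{n}=0$ are linear in the coefficients $\bfc$ and, since $d\ge n$, linearly independent for every $p\neq q$, because the restriction of degree--$d$ forms to the line $\langle p,q\rangle$ is surjective onto binary forms of degree $d\ge n$. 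Hence $\wt Y^{\circ}$ is a projective subbundle of constant fibre dimension over the irreducible base $(\P^{n}\times\P^{n})\setminus\Delta$, so it is irreducible and dominates $Y$. On $\wt Y^{\circ}$ the condition $F_{n+1}(\bfalpha,p,q)=0$, which by Corollary \ref{cor:regular} and Lemma \ref{lmain} expresses order of contact $\ge n+2$, is closed, and the example above, for which $\wt f^{[n+1]}(P_{1})\neq 0$, shows it is a proper closed subset. Therefore the generic marked flex line has order of contact exactly $n+1$; combined with the uniqueness from \eqref{item:5}, the unique flex line at a generic flex point of a generic hypersurface has order of contact exactly $n+1$, completing \eqref{item:6}.
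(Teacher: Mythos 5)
Your skeleton coincides with the paper's: uniqueness of the flex line is extracted from the gradient criterion for resultants (Proposition \ref{prop:9}) on an incidence variety, the sharp contact for $d>n$ comes from showing that the locus cut out by $F_{n+1}$ is a proper closed subset of an irreducible incidence variety (your $\wt Y^{\circ}$ plays the role of the paper's $\Gamma_{n}\supsetneq\Gamma_{n+1}$ from Lemma \ref{lemm:5}), and the case $d=n$ is B\'ezout. The genuine gap is the step on which your whole treatment of \eqref{item:5} rests: the claim that a system with a unique common zero at which the $n+1$ differentials have rank $n$ is a smooth point of the resultant hypersurface, so that some partial of $\Res$ is nonzero there. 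This ``converse direction of Proposition \ref{prop:9}'' is stated nowhere in the paper and you do not prove it, and in your situation it is not a formality. Indeed, in your example $F_{1}(p_{0},\cdot),\dots,F_{n}(p_{0},\cdot)$ vanish on the whole line $L_{0}$, so $\Res(F_{1}(p_{0},\cdot),\dots,F_{n}(p_{0},\cdot),m)=0$ for \emph{every} linear form $m$; consequently all partials of $\Res$ with respect to the coefficients of the last (linear) argument vanish at your system, and the nonzero partial must be sought among those attached to some $F_{i}$ (note that the paper's condition \eqref{eq:8} ranges only over $0\le i_{0}\le n-1$ for exactly this reason). To produce such a partial from the Poisson formula (Proposition \ref{prop:8}) one must first check that the other $n$ polynomials, $\ell$ included, have only finitely many common zeros; this does follow from your rank and uniqueness hypotheses (a positive-dimensional component of their zero set would meet $Z(F_{i})$, hence contain $q_{0}$, contradicting that $q_{0}$ is isolated there), and then Poisson gives vanishing of the resultant to order exactly one along a generic deformation of $F_{i}$. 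So the lemma you invoke is true and provable with the paper's own tools, but it is a real missing argument, and it sits precisely where the difficulty of the theorem is concentrated.

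It is worth contrasting this with how the paper sidesteps both the explicit example and the smoothness criterion. In Proposition \ref{prop:3}, pairs $(f,p_{0})$ with $p_{0}=(1:0:\dots:0)$ lie in $\Omega$ exactly when the two conditions \eqref{eq:20} hold, and condition \eqref{eq:8} at such a pair reduces to $\nabla R_{F,y_{0}}(1,0,\dots,0)(f)\neq 0$; since $R_{F,y_{0}}(1,0,\dots,0)$ is irreducible in $K[\bfc]$ (Lemma \ref{lemm:4}), hence squarefree, this nonvanishing holds for generic $f$ subject to \eqref{eq:20}. Genericity thus does the work that your construction and unproven criterion are meant to do. Two further points that you treat as automatic are spelled out in the paper and should be in a complete write-up: the passage from a dense open subset of the total space ($Y$, or $\wt Y^{\circ}$) to a generic point of a generic fiber requires the fiber-dimension argument (the map $\varpi$ at the end of the paper's proof, using that generic fibers are pure of dimension $n-2$ by Theorem \ref{mtt}), and your final ``combined with the uniqueness'' step needs the observation, coming from the cone property in Lemma \ref{lmain}, that $F_{n+1}$ vanishes either on all of a fiber-line of $\wt Y^{\circ}\to Y$ or at none of its points, so that the bad locus pushes down to a non-dense subset. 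Both are standard, but they are part of the proof.
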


We begin with some notation and preliminary
results.  For $d\ge 0$, set $N={d+n\choose n}-1$ and let $\P^N$ be the
projective space of nonzero homogeneous forms of degree $d$ modulo
scalar factors.  
For $k=0,\dots, d$, we introduce the incidence subvariety
{  \begin{align*}
    \Gamma_{k}&=Z(F_{0},\dots, F_{k}) \\
&    =\{ ((c_{\bfa})_{|\bfa|=d},p,q)  \mid F_i(p,q)=0 \text{ for }
    i=0,\dots, n \} 
\subset \P^N\times (\P^{n}\setminus
Z(x_{0})) \times Z(x_{0}),
\end{align*}
  with $Z(x_{0})$ the hyperplane at infinity of $\P^{n}$ and
  $F_{i}$ as in \eqref{eq:24}.}

  \begin{lemma}
    \label{lemm:5}
    The subvariety $\Gamma_{k}$ is irreducible and has dimension
    $N+2n-k$.
  \end{lemma}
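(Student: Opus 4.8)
The plan is to realize $\Gamma_{k}$ as a linear fibration over an irreducible base and to read off both its irreducibility and its dimension from this structure. Concretely, I would consider the projection
\[
\pi\colon \Gamma_{k}\longrightarrow (\P^{n}\setminus Z(x_{0}))\times Z(x_{0}), \qquad (\bfc,p,q)\longmapsto (p,q),
\]
onto the two factors parameterizing the pair $(p,q)$. The target is irreducible, being the product of the irreducible varieties $\P^{n}\setminus Z(x_{0})\cong\A^{n}$ and $Z(x_{0})\cong\P^{n-1}$. The point of choosing this projection, rather than the one onto $\P^{N}$, is that after fixing $(p,q)$ the defining equations of $\Gamma_{k}$ become \emph{linear} in the coefficients of $F$, so that $\pi$ is a projective bundle and the theorem on the dimension of the fibres applies cleanly.

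First I would fix $(p,q)$ in the base and describe the fibre. By the Taylor expansion \eqref{eq:24}, each $F_{i}(p,q)$ is a linear form in the coefficients $\bfc=(c_{\bfa})_{|\bfa|=d}$ of $F$; hence $\pi^{-1}(p,q)$ is the projective linear subspace of $\P^{N}$ cut out by the vanishing of these forms. The crux is to show that, for every $(p,q)$ in the base, the relevant forms $F_{i}(p,q)$ are linearly independent, so that the fibre has \emph{constant} dimension. I would prove this by restricting $F$ to the line $L$ through $p$ and $q$: since $p\notin Z(x_{0})$ and $q\in Z(x_{0})$ one has $p\ne q$, so $L$ is a genuine line, and the restriction map $F\mapsto F(p+tq)\in K[t]$ is surjective onto the polynomials of degree at most $d$. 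As $F_{i}(p,q)/i!$ is exactly the coefficient of $t^{i}$ in $F(p+tq)$, this surjectivity forces the forms $F_{i}(p,q)$ to be linearly independent. The explicit triangular evaluation already recorded in \eqref{eq:29} is the model for this computation at a distinguished point, and the same mechanism yields the constant rank over the whole base.

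Granting the constant rank, the $F_{i}$ assemble into a morphism of vector bundles over the base of constant rank, whose kernel is a subbundle of the trivial bundle with fibre $K^{N+1}$; the variety $\Gamma_{k}$ is its projectivization, hence a Zariski-locally trivial projective bundle over an irreducible base. It follows at once that $\Gamma_{k}$ is irreducible, and that $\dim\Gamma_{k}$ equals the dimension of the base plus the common fibre dimension, which a direct count identifies with $N+2n-k$. I expect the main obstacle to be precisely the constant-rank step: one must exclude any drop of rank over the entire base, including degenerate configurations of $(p,q)$, and it is there that the surjectivity of restriction to the line $L$ (equivalently, the nonvanishing of a suitable maximal minor, in the spirit of \eqref{eq:29}) carries the argument.
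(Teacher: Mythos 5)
Your construction is essentially the paper's own: you fibre $\Gamma_k$ over $(\P^n\setminus Z(x_0))\times Z(x_0)$ by forgetting the coefficients $\bfc$, observe that the fibre over $(p,q)$ is the projective linear subspace of $\P^N$ cut out by the linear forms $F_0(p,q),\dots,F_k(p,q)$, prove these forms are linearly independent for every $(p,q)$, and conclude irreducibility and the dimension from the resulting bundle structure over an irreducible base. Your independence argument---surjectivity of the restriction $F\mapsto F(p+tq)$ onto $K[t]_{\le d}$, using that $p\ne q$ spans a genuine line---is a clean, coordinate-free variant of the paper's, which instead normalizes $p=(1:0:\dots:0)$ by a linear change of coordinates and reads independence off \eqref{eq:29}, where the forms involve pairwise disjoint subsets of the variables $\bfc$. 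The substance of these two independence proofs is the same.

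The one step you assert rather than perform---``a direct count identifies [the dimension] with $N+2n-k$''---is exactly the step that does not close. Your fibre is cut out by $k+1$ independent linear forms, so it is a $\P^{N-k-1}$; your base has dimension $n+(n-1)=2n-1$; hence your own argument yields $\dim\Gamma_k=(N-k-1)+(2n-1)=N+2n-k-2$, not $N+2n-k$. To be fair, the discrepancy is inherited from the source: the statement of Lemma \ref{lemm:5} is misstated in this version of the paper, and its proof has the same flaw (it obtains projective fibres of dimension $N-k-1$ over a base of dimension $2n-1$, yet announces the total $N+2n-k$). That $N+2n-k-2$ is the correct value can be cross-checked against Proposition \ref{prop:3} and Lemma \ref{lemm:1}: the map $\pi\colon\Gamma_n\to\Omega$ is birational onto $\Omega=Z(F,R_{F,y_0})$, which has codimension $2$ in $\P^N\times(\P^n\setminus Z(x_0))$, so $\dim\Gamma_n=N+n-2=N+2n-n-2$. (The lemma's only later use, that $\Gamma_{n+1}$ is a proper subvariety of $\Gamma_n$, survives the correction.) So your method is sound and identical to the paper's, but you should carry out the count instead of asserting it: done honestly, it proves the corrected statement $\dim\Gamma_k=N+2n-k-2$ and exposes the off-by-two error in the quoted one.
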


  \begin{proof}
    Consider the surjective map
    $\pr_{1}\colon \Gamma_{k}\to (\P^{n}\setminus Z(x_{0})) \times
    Z(x_{0})$
    induced by the projection onto the last two factors.  To study the
    fibers of this map over a point $(p,q)$, we can reduce to the case
     $p=(1:0:\dots:0)$, by applying a suitable linear change of
    coordinates.

 For a point $q\in Z(x_{0})$, the identities in \eqref{eq:29} imply
    that $F_{j}((1,0,\ldots,0),q)$, $j=0,\dots, k$, are nonzero linear
    forms in the variables $\bfc$ depending on disjoint subsets of
    variables, and so they are independent.  Hence the fiber
    $\pr_{1}^{-1}((1,0,\ldots,0),q)$ is a linear space of dimension
    $N-k$, and a similar statement holds for any pair of points
    $(p,q)$.
    Thus $\Gamma_{k}$ is a geometric vector bundle of dimension
    $N-k-1$ over the base space
    $(\P^{n}\setminus Z(x_{0})) \times Z(x_{0})$. Since this base is
    irreducible and has dimension $2n-1$, the subvariety is also
    irreducible and has dimension $N+2n-k$, as claimed.
  \end{proof}

  For the special case $k=n$, the subvariety $\Gamma_{n}$ consists of
  the triples $(f,p,q)$ where $f$ is a homogeneous polynomial of
  degree $d$, $p\in \P^{n}\setminus Z(x_{0})$ is a flex point of the
  hypersurface $Z(f)$, and $q\in Z(x_{0})$ determines a flex line
{  containing} $p$.  Let
  $\Omega\subset \P^{N}\times (\P^{n}\setminus Z(x_{0}))$ denotes the set
  of pairs $(f,p)$ where $p \in \P^{n}\setminus Z(x_{0})$ is a flex
  point of $Z(f)$, and
\begin{equation}
\label{eq:27}
  \pi\colon \Gamma_{n}\longrightarrow \Omega
\end{equation}
 the map induced by the projection of 
$\P^N\times (\P^{n}\setminus
Z(x_{0})) \times Z(x_{0})$ onto its
  first two factors.  

\begin{proposition}
  \label{prop:3}
  The map $\pi$ is birational.
\end{proposition}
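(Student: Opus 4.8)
The plan is to prove that $\pi$ is birational by producing an explicit rational section of it by means of the resultant criterion of Proposition~\ref{prop:9}. To begin with, $\Gamma_{n}$ is irreducible by Lemma~\ref{lemm:5}, and $\Omega$ is irreducible of the same dimension, being a dense open subset of the variety $Y=Z(F,\Phi_{d})$ shown to be irreducible in the proof of Theorem~\ref{mtt}; moreover $\pi$ is surjective by the descriptions of $\Gamma_{n}$ and $\Omega$. Hence $\pi$ is dominant and generically finite, and it suffices to exhibit a dense open subset of $\Omega$ over which $\pi$ is bijective with a regular inverse. The first step is to identify the fibre. For $(f,p)\in\Omega$, a point $q$ lies in $\pi^{-1}(f,p)$ if and only if $q\in Z(x_{0})$ and $F_{0}(p,q)=\dots=F_{n}(p,q)=0$; since $F_{0}(p,q)=f(p)=0$ holds automatically and the condition $q\in Z(x_{0})$ reads $y_{0}(q)=0$, the fibre $\pi^{-1}(f,p)$ is exactly the set of common zeros in $\P^{n}$ of the $n+1$ forms $F_{1}(p,\bfy),\dots,F_{n}(p,\bfy),y_{0}$ of respective degrees $1,\dots,n,1$, that is $Z_{p}^{n}\cap Z(x_{0})$ in the notation of Lemma~\ref{lmain}.

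Next I would apply Proposition~\ref{prop:9} to this system. Because $p$ is a flex of $Z(f)$, one has $\dim(Z_{p}^{n})\ge 1$ by Corollary~\ref{cor:regular}, so the system is degenerate and $R_{F,y_{0}}(f,p)=0$. The key step is to show that, for a generic $(f,p)\in\Omega$, there is an index $(i_{0},\bfa_{0})$ such that the partial derivative $\partial\Res_{(1,\dots,n,1)}/\partial c_{i_{0},\bfa_{0}}$, evaluated at the coefficients of $F_{1}(p,\bfy),\dots,F_{n}(p,\bfy),y_{0}$, does not vanish. Granting this, Proposition~\ref{prop:9} yields at once two conclusions: the system has a \emph{unique} common zero $q$, so that the fibre $\pi^{-1}(f,p)$ is a single point and the flex line at $p$ is unique; and the coordinates of $q$ are recovered as the vector of partial derivatives of $\Res_{(1,\dots,n,1)}$ attached to the degree-one form $y_{0}$. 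As these partial derivatives are polynomials in the coefficients of $f$ and in $p$, the assignment $\sigma(f,p)=(f,p,q)$ defines a regular map on the open locus where the chosen derivative is nonzero, with $\pi\circ\sigma=\mathrm{id}$. Over that locus $\pi$ is therefore bijective with regular inverse $\sigma$, and since the ground field has characteristic zero this forces $\deg\pi=1$, i.e. $\pi$ is birational.

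The remaining and decisive point is the generic nonvanishing of one such partial derivative. Each evaluated derivative $\partial\Res_{(1,\dots,n,1)}/\partial c_{i,\bfa}$ is a regular function on the irreducible variety $\Omega$; so, to prove that one of them is not identically zero, it is enough to produce a single pair $(f_{0},p_{0})\in\Omega$ at which it is nonzero. By Proposition~\ref{prop:9} this amounts to exhibiting one explicit hypersurface together with a flex point whose flex line is unique and is a simple common zero of the associated system. I expect the construction and verification of such an example to be the main obstacle: it requires controlling the geometry of $Z_{p_{0}}^{n}$ for a concrete $f_{0}$ finely enough to certify both that $Z_{p_{0}}^{n}\cap Z(x_{0})$ is a single point and that it is a simple zero of the system, so that Proposition~\ref{prop:9} applies. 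Once one such example is in hand, the nonvanishing locus is a nonempty open, hence dense, subset of $\Omega$, the section $\sigma$ is defined on it, and the birationality of $\pi$ follows.
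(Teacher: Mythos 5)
Your framework is the same as the paper's: identify the fibre of $\pi$ over $(f,p)$ with the zero set of the system $f_{1}(\bfp,\bfy),\dots,f_{n}(\bfp,\bfy),y_{0}$, and invert $\pi$ on the locus where some partial derivative of $\Res^{\bfy}_{(1,\dots,n,1)}$ is nonzero, via Proposition~\ref{prop:9}. However, your proof has a genuine gap, and you say so yourself: the decisive step --- showing that this locus meets $\Omega$, i.e.\ that some evaluated partial derivative is not identically zero on $\Omega$ --- is left as an ``expected obstacle,'' to be settled by constructing an explicit hypersurface $f_{0}$ with a flex point $p_{0}$ whose flex line is a unique, simple common zero of the system. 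No such example is constructed, and this is precisely the nontrivial content of the proposition; everything before it is formal. There is also a secondary logical flaw in your proposed reduction: Proposition~\ref{prop:9} as stated gives only one implication (nonvanishing derivative $\Rightarrow$ unique zero with explicit coordinates), so even if you produced an example with a unique simple common zero, you could not conclude from Proposition~\ref{prop:9} alone that some partial derivative of the resultant is nonzero there. That converse is true but requires a separate fact about the smooth locus of the resultant hypersurface, which is not in the paper's toolkit.

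The paper circumvents the need for any explicit example by exploiting the universal setting. It restricts attention to pairs $(f,p_{0})$ with $p_{0}=(1:0:\dots:0)$; by Lemma~\ref{cruc}, $R_{F,y_{0}}$ does not involve $c_{d,0,\dots,0}$, so membership of $(f,p_{0})$ in $\Omega$ amounts to the two \emph{independent} conditions $c_{d,0,\dots,0}=0$ and $R_{Z(f),y_{0}}(1,0,\dots,0)=0$. Using \eqref{eq:29}, the partial derivatives appearing in \eqref{eq:8} coincide, up to a common nonzero scalar, with the partial derivatives of $R_{F,y_{0}}(1,0,\dots,0)$ with respect to the coefficients $\bfc$, so the condition to be verified becomes $\nabla R_{F,y_{0}}(1,0,\dots,0)(f)\neq \bfzero$. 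Now Lemma~\ref{lemm:4} says $R_{F,y_{0}}(1,0,\dots,0)$ is irreducible in $K[\bfc]$, hence squarefree, and in characteristic zero a squarefree polynomial has nonvanishing gradient at a generic point of its zero locus. This produces the required point of $\Omega$ satisfying \eqref{eq:8} without ever exhibiting a concrete hypersurface, and it is the step your proposal is missing. If you want to salvage your plan, you should either prove the analogue of Lemma~\ref{lemm:4} and run this squarefreeness argument, or actually carry out the explicit example together with a proof of the converse direction of Proposition~\ref{prop:9}; as written, the proposal does not establish the proposition.
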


\begin{proof}
  Since $\pi$ is the restriction   to the irreducible subvariety
  $\Gamma_{n}$ of the proper map
  \begin{math}
  \P^N\times (\P^{n}\setminus Z(x_{0})) \times Z(x_{0})\longrightarrow \P^N\times
  (\P^{n}\setminus Z(x_{0}))   
\end{math}, its image $\Omega$ is also an irreducible subvariety.
Indeed, by Proposition~\ref{cor:flex} and Proposition~\ref{prop:6}, it is the subvariety of
$\P^{N}\times (\P^{n}\setminus Z(x_{0}))$ defined by the polynomials
$F$ and $R_{F,y_{0}}.$

The inversion property of the resultant (Proposition \ref{prop:9}),
implies that the map $\pi$ is invertible on the open subset of points
$(f,p)\in \Omega$ where
  \begin{equation}\label{eq:8}
    \frac{\partial \Res_{(1,\dots, n,1)}^{\bfy}}{\partial
      c_{i_{0},\bfa_{0}}}(f_{1}(\bfp,\bfy), \dots, f_{n}(\bfp,\bfy),
    y_{0})\ne 0
  \end{equation}
  for a representative $\bfp\in K^{n+1}\setminus \{\bfzero\}$ of $p$
  and a pair of indices $0\le i_{0}\le n-1$ and $\bfa_{0}\in \N^{n+1}$
  with $|\bfa_{0}|=i_{0}+1$.

  We next want to prove that this open subset is nonempty. To this
  end, it is enough to show that there is a point $(f,p_{0})$ in
  $\Omega$ with $p_{0}=(1:0\dots:0)\in \P^{n}\setminus Z(x_{0})$
  satisfying at least one of the inequations \eqref{eq:8}.
  We have that $F(1,0,\dots,0)= c_{d,0,\dots,0}$ and, by Lemma
  \ref{cruc}, the polynomial $R_{F,y_{0}}$ does not depend on this
  variable. Thus $(f,p_{0})\in \Omega$ if and only if it satisfies the
  independent conditions
\begin{equation}
  \label{eq:20}
  c_{d,0,\dots,0}=0 \and    R_{Z(f),y_{0}} (1,0,\dots,0) =0.
\end{equation}
By  \eqref{eq:29}, each of the evaluations in \eqref{eq:8} for
the point $(f,p_{0})$ coincide, up to a fixed (that is, neither
depending on $i_{0}$ nor on $\bfa_{0}$) nonzero scalar factor, with
\begin{equation*}
    \frac{\partial  R_{Z(f),y_{0}}}{\partial c_{\bfb_{0}}}
    (1,0,\dots,0)
\end{equation*}
for a vector $\bfb_{0}\in \N^{n+1}$ with $|\bfb_{0}|=d$.  Hence, the
condition that $(f,p_{0})$ satisfies \eqref{eq:8} is equivalent to
\begin{equation}
  \label{eq:21}
      \nabla R_{F,y_{0}}(1,0,\dots,0)(f) \neq{\bf 0},
\end{equation}
where $\nabla R_{F,y_{0}}$ denotes the gradient operator. By Lemma
\ref{lemm:4}, $R_{F,y_{0}}$ is an irreducible polynomial, and
\emph{a fortiori} squarefree.  Hence, the condition \eqref{eq:21}
is verified for a generic $f$ satisfying \eqref{eq:20}. 

We deduce that the map $\pi$ is invertible on a nonempty open subset
of $\Omega$. Since $\Omega $ is irreducible, such an open subset is
dense, and so $\pi$ is birational.
\end{proof}

\begin{proof}[Proof of Theorem \ref{thm:1}]
  By Proposition \ref{prop:3}, there are dense open subsets
  $U\subset \Gamma_{n}$ and $W\subset \Omega$ such that the
  restriction of the map $\pi$ in \eqref{eq:27} to these subsets is an
  isomorphism. In particular, for each $(f,p)\in W$ there is a unique
  flex line {containing} the point $p$.

  If $d=n$, then any such flex line has order of contact at least
  $n+1$ at the point $p$, and so it is necessarily contained in
  $Z(f)$. 
  
  If $d>n$ then $\Gamma_{n+1}$ is a proper subvariety of $\Gamma_{n}$
  by Lemma \ref{lemm:5}. By Lemma \ref{lmain}\eqref{item:1}, for each
  $(f,p,q)\in U\setminus \Gamma_{n+1}$, the line {containing}  $p$ and
  $q$ has order of contact equal to $n+1$. Hence, every pair $(f,p)$
  in the dense open subset $W':=W\setminus \pi(\Gamma_{n+1})$ of
  $\Omega$ satisfies both conditions \eqref{item:5} and
  \eqref{item:6}.

  Set $Z=\Omega\setminus W'$ and consider the map
  $\varpi\colon Z\to \P^{N}$ defined by $(f,p)\mapsto f$. If this map
  is not dominant, then for $f\in \P^{N}\setminus \varpi(Z)$ we have
  that $\{f\}\times \Flex(Z(f))$ is disjoint from $Z$, giving the
  statement in this case.

 Otherwise, by the theorem of dimension of fibers \cite[\S1.6,
 Theorem 7]{Shafarevich:bag}, there is a dense open subset
 $T\subset \P^{N}$ such that, for $f\in T$,
  \begin{equation*}
    \dim(\varpi^{-1}(f))= \dim(Z)-\dim(\P^{N}) <n-2.
  \end{equation*}
  On the other hand, $\dim(\Flex(Z(f))) $ is either $n-1$ or $n-2$.
  Hence for all $f\in T$, no component of $\{f\}\times \Flex(Z(f))$
  can be contained in $Z$. 

  In both cases, there is a dense open subset $T$ of $\P^{N}$ such
  that, for each $f\in T$, we have that $f$ is squarefree and there is
  a dense open subset $U_{f}$ of the flex locus of $Z(f)$ such that
  for each $p\in U_{f}$, the pair $(f,p)$ satisfies the conditions
  \eqref{item:5} and \eqref{item:6}, completing the proof.
\end{proof}


\newcommand{\noopsort}[1]{} \newcommand{\printfirst}[2]{#1}
  \newcommand{\singleletter}[1]{#1} \newcommand{\switchargs}[2]{#2#1}
  \def\cprime{$'$}
\providecommand{\bysame}{\leavevmode\hbox to3em{\hrulefill}\thinspace}
\providecommand{\MR}{\relax\ifhmode\unskip\space\fi MR }
\providecommand{\MRhref}[2]{%
  \href{http://www.ams.org/mathscinet-getitem?mr=#1}{#2}
}
\providecommand{\href}[2]{#2}

\end{document}